\newtheorem{theorem}{Theorem}[section]
\newtheorem{lemma}[theorem]{Lemma}
\newtheorem{corollary}[theorem]{Corollary}
\newtheorem{proposition}[theorem]{Proposition}
\newtheorem{question}[theorem]{Question}
\theoremstyle{definition}
\newtheorem{example}[theorem]{Example}
\newtheorem{remark}[theorem]{Remark}
\newcounter{tbox}
\newcommand{\leqnomode}{\tagsleft@true}
\newcommand{\reqnomode}{\tagsleft@false}
\newcommand*{\myproofname}{Proof}
\newenvironment{claimproof}[1][\myproofname]{\begin{proof}[#1]}{\end{proof}}
\newtheoremstyle{parens}
  {}
  {}
  {\itshape}
  {\parindent}
  {}
  {}
  {.5em}
  {(\thmnumber{#2})\thmnote{ [#3]}}
\theoremstyle{parens}
\newtheorem{nitem}[equation]{}
\crefname{theorem}{Thm.}{Thms.}
\Crefname{theorem}{Theorem}{Theorems}
\crefname{lemma}{Lem.}{Lems.}
\Crefname{lemma}{Lemma}{Lemmas}
\crefname{corollary}{Cor.}{Cors.}
\Crefname{corollary}{Corollary}{Corolarries}
\crefname{proposition}{Prop.}{Props.}
\Crefname{proposition}{Proposition}{Propositions}
\newcommand{\tw}{\mathsf{tw}}
\newcommand{\tin}{\mathsf{tree}\textnormal{-}\alpha}
\newcommand{\dg}{\mathsf{dgn}}
\newcommand{\ad}{\mathsf{ad}}
\tikzset{
  cir/.style = {circle,draw,fill,inner sep=.7pt},
  circ/.style = {circle,draw,fill,inner sep=1.3pt},
  circg/.style = {circle,draw=lightgray,fill=lightgray,inner sep=1.3pt},
  circr/.style = {circle,draw=Crimson,fill=Crimson,inner sep=1.3pt},
  invisible/.style = {circle,draw=none,inner sep=0pt,font=\tiny},
  nonedge/.style={decorate,decoration={snake,amplitude=.3mm,segment length=1mm},draw},
}
\newtcolorbox{mybox}[1]{minipage boxed title*=-2cm,
enhanced,attach boxed title to top center=
{yshift=-3mm,yshifttext=-1mm},colback=Lavender!30!white,
boxed title style={size=small,colback=Lavender},coltitle=black,
center title,title={#1}}
\newcommand{\email}[1]{%
  \texttt{#1}%
}
\newcommand\extrafootertext[1]{%
    \bgroup
    \renewcommand\thefootnote{\fnsymbol{footnote}}%
    \renewcommand\thempfootnote{\fnsymbol{mpfootnote}}%
    \footnotetext[0]{#1}%
    \egroup
}
\title{Layered tree-independence number and clique-based separators}
\date{}
\author[1]{Clément Dallard}
\author[2]{Martin Milanič}
\author[3]{Andrea Munaro}
\author[4]{Shizhou Yang}
\affil[1]{Department of Informatics, University of Fribourg, Switzerland}
\affil[2]{FAMNIT and IAM, University of Primorska, Koper, Slovenia}
\affil[3]{Department of Mathematical, Physical and Computer Sciences, University of Parma, Italy}
\affil[4]{School of Mathematics and Physics, Queen's University Belfast, UK}
\begin{document}

\maketitle
\extrafootertext{\scriptsize Emails: 
\email{clement.dallard@unifr.ch},
\email{martin.milanic@upr.si}, 
\email{andrea.munaro@unipr.it},
\email{shizhouyang1@gmail.com}.}

\begin{abstract}
\noindent 
Motivated by a question of Galby, Munaro, and Yang (SoCG 2023) asking whether every graph class of bounded layered tree-independence number admits clique-based separators of sublinear weight, we investigate relations between layered tree-independence number, weight of clique-based separators, clique cover degeneracy and independence degeneracy. In particular, we provide a number of results bounding these parameters on geometric intersection graphs. For example, we show that the layered tree-independence number is $\mathcal{O}(g)$ for $g$-map graphs, $\mathcal{O}(\frac{r}{\tanh r})$ for hyperbolic uniform disk graphs with radius $r$, and $\mathcal{O}(1)$ for spherical uniform disk graphs with radius $r$. Our structural results have algorithmic consequences. In particular, we obtain a number of subexponential or quasi-polynomial-time algorithms for weighted problems such as $\textsc{Max Weight Independent Set}$ and $\textsc{Min Weight Feedback Vertex Set}$ on several geometric intersection graphs. Finally, we conjecture that every fractionally tree-independence-number-fragile graph class has bounded independence degeneracy.

\medskip
\noindent{\bf Keywords:} tree-independence number, separator theorems, geometric intersection graphs 

\medskip
\noindent{\bf MSC Classes (2020):} 
05C10, 
05C62, 
05C75, 
05C85. 
\end{abstract}

\section{Introduction}

The main objective of this paper is to shed some light on two graph-theoretic properties implying the existence of subexponential-time algorithms for a wealth of $\mathsf{NP}$-hard graph problems: boundedness of layered tree-independence number and existence of clique-based separators of sublinear weight. These properties hold in particular for several well-known geometric intersection graphs. Recall that, given a collection $\mathcal{O}$ of objects in some metric space, its \textit{intersection
graph} is the graph whose vertices are the objects in $\mathcal{O}$ and where two distinct vertices are adjacent if and only if the
corresponding objects intersect.

We now properly define the notions of layered tree-independence number and clique-based separator. 
Let $G$ be a graph. A \emph{layering} of $G$ is an ordered partition $(V_0,V_1,\ldots)$ of the vertex set of $G$ such that for each edge $\{u,v\}\in E(G)$, if $u\in V_i$ and $v\in V_j$, then $|i-j|\le 1$.\footnote{For convenience, we consider layerings as infinite sequences of pairwise disjoint subsets of $V(G)$ with union $V(G)$, with $V_i = \varnothing$, for all sufficiently large $i$.} The sets $V_i$ are called \emph{layers}. 
The \emph{layered independence number} of a tree decomposition $\mathcal{T} = (T,\{X_t\}_{t \in V(T)})$ of $G$ is the minimum integer $\ell$ such that there exists a layering $(V_0,V_1,\ldots)$ of $G$ with $\alpha(G[X_t\cap V_i])\le \ell$, for every bag $X_t$ and layer $V_i$, where $\alpha(H)$ denotes the independence number of a graph $H$. The \emph{layered tree-independence number} of $G$ is the minimum layered independence number of a tree decomposition of $G$. If, however, the ``quality measure'' of the pair given by a tree decomposition and a layering of $G$ is given by the maximum \textsl{size} of the intersection between a bag and a layer, then we end up with the stronger notion of layered treewidth \cite{DMW17}.

As shown in~\cite{GMY24}, several interesting graph classes have bounded layered tree-independence number: intersection graphs of similarly-sized $c$-fat objects in $\mathbb{R}^2$ (in particular, unit disk graphs), intersection graphs of unit-width rectangles in $\mathbb{R}^2$, odd powers of graphs of bounded layered tree-independence number,\footnote{This does not extend to even powers \cite{GMY24}.} and some subclasses of (vertex and edge) intersection graphs of paths on a grid. 
Note, however, that layered tree-independence number fails to capture geometric intersection graphs in higher dimensions, as unit ball graphs in $\mathbb{R}^3$ have unbounded layered tree-independence number \cite{GMY24}.  

The \textit{tree-independence number} of a graph $G$ is the minimum independence number over all tree decompositions of $G$, where the \textit{independence number} of $\mathcal{T} = (T,\{X_t\}_{t \in V(T)})$ is the quantity $\max_{t\in V(T)} \alpha(G[X_t])$.
Bounded layered tree-independence number of a class $\mathcal{G}$ implies $\mathcal{O}(\sqrt{n})$ tree-independence number for $n$-vertex graphs in $\mathcal{G}$ \cite{GMY24} and hence, by Ramsey's theorem, $\mathcal{O}(\sqrt{n})$ treewidth for $n$-vertex graphs in $\mathcal{G}$ with bounded clique number (see also \Cref{ramsey,treealphabound}). 
Combining this observation with the $n^{\mathcal{O}(k)}$-time algorithms for \textsc{Max Weight Distance-$d$ Packing} (for fixed even $d \in \mathbb{N}$) and \textsc{Min Weight Feedback Vertex Set} for graphs with tree-independence number at most $k$ \cite{LMMORS24} gives $2^{\mathcal{O}(\sqrt{n}\log n)}$-time algorithms for the aforementioned graph classes.

Let us now introduce clique-based separators. A pair of vertex subsets $(A,B)$ is a \textit{separation} in a graph $G$ if $A \cup B = V(G)$ and there is no edge between $A \setminus B$ and $B \setminus A$. A separation $(A,B)$ is $\beta$-\textit{balanced} if $\max\{|A \setminus B|, |B \setminus A|\} \leq \beta|V(G)|$ for some $\beta < 1$. The \textit{separator} of a separation $(A, B)$ is the set $A \cap B$. A \textit{clique-based separator} of a graph $G$ is a collection $\mathcal{S}$ of vertex-disjoint cliques whose union is a balanced 
separator of $G$. The \textit{size} of $\mathcal{S}$ is $|\mathcal{S}|$ and the \textit{weight} of $\mathcal{S}$ is the quantity $\sum_{C\in\mathcal{S}} \log(|C|+1)$. De Berg et al.~\cite{BBK20,dBKMT23} showed that several intersection graphs of geometric objects (e.g., $0$-map graphs, intersection graphs of convex fat objects or similarly-sized fat objects in $\mathbb{R}^d$, and pseudo-disk graphs) admit clique-based separators of sublinear weight (sublinear in the number of vertices) and used this to obtain subexponential-time algorithms for many \textit{unweighted} problems on such graphs. For example, if $\mathcal{G}$ is a class of geometric intersection graphs such that, for every $n$-vertex graph $G \in \mathcal{G}$ we can compute in polynomial time a clique-based separator of $G$ of weight $w_{\mathcal{G}}(n)$, then \textsc{Independent Set} and \textsc{Feedback Vertex Set} admit $2^{\mathcal{O}(w_{\mathcal{G}}(n))}$-time algorithms \cite{dBKMT23}.

Observe that, if a graph class $\mathcal{G}$ admits clique-based separators of sublinear weight, then it also admits separators of sublinear independence number and so has sublinear tree-independence number, thanks to the following observation based on \cite[Theorem~20]{Bod98} (see also \cite[Lemma~2]{ACO23}). For completeness, we provide a proof in \Cref{s:prelim}. 

\begin{restatable}{lemma}{observ}
\label{cliquesep_totreealpha}
Let $0 < \beta < 1$ and let $f\colon \mathbb{R}_{>0}\to \mathbb{R}_{\geq 0}$ be a non-decreasing function. If $\mathcal{G}$ is a hereditary\footnote{A graph class is hereditary if it is closed under vertex deletion.} graph class such that, for every $n$-vertex graph $G \in \mathcal{G}$, there exists a $\beta$-balanced clique-based separator of $G$ of size $\mathcal{O}(f(n))$, then $G$ has tree-independence number $\mathcal{O}(f(n) \cdot \log{n})$. If in addition $f(n) = \Omega(n^{\varepsilon})$ for some $\varepsilon > 0$ and there exists $c < 1$ such that $f(\lfloor\beta n\rfloor) \leq cf(n)$ for all sufficiently large $n$,
then $G$ has tree-independence number $\mathcal{O}(f(n))$.  
\end{restatable}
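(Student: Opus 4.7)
The plan is to build a tree decomposition of $G$ recursively, invoking the hypothesized clique-based separator at each level. Given $G \in \mathcal{G}$ on $n$ vertices, I would pick a $\beta$-balanced clique-based separator $\mathcal{S}=\{C_1,\dots,C_k\}$ of size $k\leq Cf(n)$ for an absolute constant $C$, with corresponding separation $(A,B)$ and separator $S=A\cap B=\bigcup_i C_i$. The crucial design choice is to recurse on $G[A\setminus S]$ and $G[B\setminus S]$ rather than on $G[A]$ and $G[B]$: both belong to $\mathcal{G}$ by heredity, and $|A\setminus S|=|A\setminus B|\leq\beta n$ (similarly for $B$), so the vertex count strictly decreases by a factor of $\beta$ at every level, regardless of how many vertices $S$ itself contains. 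I would then add $S$ to every bag of the two recursive tree decompositions and join them at a new root bag equal to $S$.

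Verifying the tree-decomposition axioms is routine. Edges inside $S$ are covered by the root bag, edges inside $A\setminus S$ or $B\setminus S$ are covered by the recursion, and edges between $S$ and one of the two sides are covered because every bag on that side now contains $S$; there are no edges between $A\setminus S$ and $B\setminus S$ by the separator property. The running-intersection condition holds because vertices of $S$ lie in every bag of the combined tree, while the remaining vertices appear only within their own sub-decomposition.

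For the independence analysis, every bag has the form $S\cup B'$ where $B'$ is a bag from one of the recursive decompositions (with $B'=\emptyset$ at the root). Since $S$ is a disjoint union of $k$ cliques, $\alpha(G[S])\leq k$, and subadditivity of $\alpha$ on disjoint vertex subsets gives $\alpha(G[S\cup B'])\leq k+\alpha(G[B'])$. Writing $g(n)$ for the worst-case bag independence in my construction on an $n$-vertex graph in $\mathcal{G}$, this yields the recurrence
\[g(n)\leq g(\lfloor \beta n\rfloor)+Cf(n).\]
Unrolling $O(\log n)$ levels and using monotonicity of $f$, each of the $O(\log n)$ summands is bounded by $Cf(n)$, which proves the first statement. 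For the second statement, iterating $f(\lfloor\beta m\rfloor)\leq cf(m)$ gives $f(n_i)\leq c^i f(n)$ for sufficiently large arguments (and the hypothesis $f=\Omega(n^\varepsilon)$ absorbs the bounded base contributions), so the geometric series collapses to $O(f(n))$.

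The only real obstacle lies in the first design choice. The hypothesis bounds only the \emph{number} of cliques in the separator, not the number of vertices, so $|S|$ could be as large as $\Omega(n)$; recursing on $G[A]$ itself could then leave the vertex count essentially unchanged and make the recursion depth blow up to $\Omega(n)$. Recursing on $G[A\setminus S]$ sidesteps this pathology and is what forces the recursion tree to have depth $O(\log n)$.
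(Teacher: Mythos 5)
Your proposal is correct and follows essentially the same route as the paper: the paper likewise recurses on $G[A\setminus B]$ and $G[B\setminus A]$, adds the separator $A\cap B$ to all bags of both recursive decompositions (joining the two trees by an edge), and derives the same recurrence $g(n)\leq g(\lfloor\beta n\rfloor)+\mathcal{O}(f(n))$, solved by a simple induction for the first assertion and by the discrete master theorem for the second. Your extra root bag and the explicit $\alpha(G[S])\leq k$ bookkeeping are only cosmetic differences.
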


However, there exist classes admitting separators of sublinear independence number but with no clique-based separators of sublinear weight, as shown by the following example from \cite{GMY24}. 
Let $\mathcal{G}$ be a class of arbitrarily large triangle-free graphs on $n$ vertices with independence number $\mathcal{O}(\sqrt{n\log n})$, and let $\mathcal{G}'$ be the class of all graphs obtained as the join\footnote{The join of two vertex-disjoint graphs $G_1 = (V_1, E_1)$ and $G_2 = (V_2, E_2)$ is the graph with vertex set $V_1 \cup V_2$ and edge set $E_1 \cup E_2 \cup \{v_1v_2 : v_1 \in V_1, v_2 \in V_2\}$.} of two graphs in $\mathcal{G}$. Every graph in $\mathcal{G}'$ has separators of sublinear independence number. On the other hand, each clique-based separator of a graph in $\mathcal{G}'$ has size (and hence weight) $\Omega(n)$. Galby et al.~\cite{GMY24} then asked whether the existence of clique-based separators of sublinear weight could be implied by the stronger requirement of having bounded (by a constant) layered tree-independence number (see \Cref{reldiagram}).

\begin{question}[Galby et al.~\cite{GMY24}]\label{questionltreealpha} Is it true that every graph class of bounded layered tree-independence number admits clique-based separators of sublinear weight?
\end{question}

It should be noted that admitting clique-based separators of sublinear weight does not imply having bounded layered tree-independence number. Indeed, Euclidean disk graphs (i.e., intersection graphs of disks in $\mathbb{R}^2$) admit clique-based separators of weight $\mathcal{O}(\sqrt{n})$ \cite{BBK20} but have unbounded layered tree-independence number \cite{GMY23}. This also shows that, contrary to clique-based separators, the notion of layered tree-independence number nicely separates unit disk graphs, for which the value is bounded, from disk graphs, for which the value is unbounded.  

We provide positive evidence for \Cref{questionltreealpha} by proving two types of results. 
First, we extend the known classes with bounded layered tree-independence number by showing that they include map graphs, powers of bounded layered treewidth graphs, and two generalizations of unit disk graphs, namely hyperbolic uniform disk graphs and spherical uniform disk graphs (see below for definitions). Second, we show that graphs in these classes all have bounded (by a constant) clique cover degeneracy, where the \textit{clique cover degeneracy} of a graph $G$ is the smallest integer $k$ such
that every nonnull induced subgraph $H$ of $G$ has a vertex whose neighborhood in $H$ induces a subgraph with clique cover number at most $k$. Finally, we argue that combining these two types of results implies the existence of clique-based separators of sublinear weight for each of the corresponding classes. 

Note, however, that there exist classes with bounded layered tree-independence number and unbounded clique cover degeneracy, as the following example shows.\footnote{We shall see that the graph classes in \Cref{ccdvsid} do admit clique-based separators of sublinear weight.}

\begin{example}\label{ccdvsid} Let $\mathcal{G}$ be a class of graphs with clique number at most $k$ and arbitrarily large chromatic number, and let $\mathcal{G}'$ be the class of all graphs obtained as the join of two graphs the complements of which are in $\mathcal{G}$. Clearly, every graph in $\mathcal{G}'$ has independence number at most $k$, and hence tree-independence number at most $k$. On the other hand, for any $\ell\in\mathbb{N}$, if $G_\ell$ is a graph in $\mathcal{G}$ with chromatic number at least $\ell$, then the join of two copies of the complement of $G_\ell$ is a graph in $\mathcal{G}'$ with clique cover degeneracy at least $\ell$.
\end{example}

Replacing the clique cover number with the smaller independence number in the definition of clique cover degeneracy leads to the notion of \textit{independence degeneracy}, which has been investigated under different names in several papers \cite{AADK02,CQ23,KT14,YB12}. 
An easy observation (see \Cref{bounded layered tree-alpha bounded alpha-degeneracy}) shows that bounded layered tree-independence number implies bounded independence degeneracy.
We conjecture that bounded independence degeneracy holds, more generally, for every fractionally $\tin$-fragile graph class, whose definition we postpone to \Cref{s:prelim}.  

\begin{restatable}{conjecture}{conj}
\label{fractional tin-fragility bounded independence degeneracy}
Every fractionally $\tin$-fragile graph class has bounded independence degeneracy. 
\end{restatable}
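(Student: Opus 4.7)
The plan is to leverage the elementary fact underpinning the special case \Cref{bounded layered tree-alpha bounded alpha-degeneracy}: any graph $G'$ with $\tin(G')\leq c$ has independence degeneracy at most $c$. Indeed, in any tree decomposition of independence number $c$, a vertex lying in a leaf bag but not in its parent bag has all its neighbors inside the leaf bag, and hence its neighborhood has independence number at most $c$; the argument persists under taking induced subgraphs since $\tin$ is hereditary. It therefore suffices to exhibit, in every induced subgraph $H$ of $G\in\mathcal{G}$, a single vertex whose \emph{full} neighborhood in $H$ has independence number at most some constant $k=k(\mathcal{G})$.

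I would argue by contradiction. Suppose some induced subgraph $H$ of some $G\in\mathcal{G}$ satisfies $\alpha(H[N_H(v)])\geq k+1$ for every $v\in V(H)$, for a sufficiently large $k$, and fix for each $v$ a witness independent set $I_v\subseteq N_H(v)$ with $|I_v|=k+1$. Invoking fractional $\tin$-fragility with a small parameter $\varepsilon$ produces a distribution over subsets $X\subseteq V(H)$ with $\Pr[u\in X]\leq\varepsilon$ for every $u$ and $\tin(H-X)\leq c=c(\varepsilon)$. Iterating the leaf-bag extraction in $H-X$ yields an ordering $w_1,\dots,w_m$ of $V(H)\setminus X$ whose forward $\alpha$-degree $\alpha(H[N_H(w_i)\cap\{w_{i+1},\dots,w_m\}])$ is at most $c$ at every step; combined with $|I_{w_i}|\geq k+1$, this forces
\[
|I_{w_i}\cap X|+|I_{w_i}\cap\{w_1,\dots,w_{i-1}\}|\geq k+1-c \quad \text{for every } i.
\]
The remaining step is to convert these pointwise losses into a contradiction with the density constraint $\Pr[u\in X]\leq\varepsilon$. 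If one could show that, in expectation, a positive fraction of the total loss is absorbed by $X$ rather than by the peeled prefix, then $\mathbb{E}\!\left[\sum_{i}|I_{w_i}\cap X|\right]$ would exceed $\varepsilon\sum_{v}|I_v|=\varepsilon n(k+1)$ for $k$ large enough, contradicting linearity of expectation.

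The main obstacle is precisely this dichotomy: a priori the whole loss of $I_{w_i}$ might fall into the peeled prefix $\{w_1,\dots,w_{i-1}\}$, a regime over which fractional $\tin$-fragility provides no direct control. A plausible way forward is to apply fragility recursively to $H[X]$ (which still belongs to the hereditary class $\mathcal{G}$), cascading the leaf-bag argument through a bounded number of levels so that the ``prefix loss'' at one level becomes the ``$X$-loss'' at the next and can then be charged to the corresponding fragility parameter. A complementary strategy would be to pair the peeling with a Ramsey-type reduction, in the spirit of the passage from $\tin$ to treewidth under bounded clique number alluded to in the introduction, so as to promote the forward $\alpha$-bound supplied by the tree decomposition to the full-neighborhood bound demanded by independence degeneracy. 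Making either strategy deliver a constant $k$ independent of $|V(H)|$ is the crux of the conjecture.
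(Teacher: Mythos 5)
There is no proof to compare against: the statement you are trying to prove is \Cref{fractional tin-fragility bounded independence degeneracy}, which the paper poses as an \emph{open conjecture}. The paper itself only establishes special cases (classes of bounded layered tree-independence number, \Cref{bounded layered tree-alpha bounded alpha-degeneracy}, and intersection graphs of $c$-fat collections, \Cref{c-fat bounded alpha-degeneracy}) and a strictly weaker relaxation, namely that every fractionally $\tin$-fragile class is polynomially $(\dg,\omega)$-bounded (\Cref{fractional tin-fragility poly chi-bounded}, via \Cref{bounded tw-fragility bounded degeneracy-new} and \Cref{ramsey}). Your proposal, as you yourself acknowledge, is a proof program rather than a proof, so it does not settle the statement either.

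The concrete gap is exactly the one you flag, and it is not a technicality but the heart of the problem. The fragility hypothesis only controls, for a random cover element, the probability that a \emph{single} vertex is missed; it says nothing about how the witness sets $I_{w_i}$ split between the uncovered set $X$ and the peeled prefix, so the charging inequality cannot be pushed through linearity of expectation. Your two suggested repairs do not obviously close this. Recursing into $H[X]$ does not terminate with a constant bound: at every level the tree decomposition again only controls \emph{forward} neighborhoods, the prefix losses reappear, and nothing caps the number of levels by a constant independent of $|V(H)|$ (note also that $X$ is only small \emph{on average} over the cover, not pointwise small in a way that would shrink the recursion geometrically for the particular element you fixed). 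The Ramsey route fails for a different reason: the passage from $\tin$ to treewidth requires bounded clique number, which fractionally $\tin$-fragile classes need not have (they contain all complete graphs), and in any case Ramsey arguments do not convert a forward $\alpha$-bound along an ordering into a bound on the full neighborhood, which is precisely the unsolved difficulty. It is instructive to compare with what the paper \emph{can} prove by a probabilistic argument of the same flavor: in \Cref{bounded tw-fragility bounded degeneracy-new} the quantity being charged is the average degree, which is a sum over edges, and each edge survives in a random cover element with probability at least $1-2/r$; your quantity requires an entire $(k+1)$-element independent set plus its center to survive, and a union bound costs $(k+2)/r$ where $k$ is exactly the parameter you are trying to bound, making the estimate circular. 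Your opening reduction (bounded $\tin$ implies bounded independence degeneracy via a vertex whose closed neighborhood lies in a bag) is correct and matches the paper's use of this fact in \Cref{bounded layered tree-alpha bounded alpha-degeneracy}, but the step from ``each cover element has bounded $\tin$'' to a single vertex of $H$ with bounded $\alpha$ of its full neighborhood remains entirely open.
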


\Cref{fractional tin-fragility bounded independence degeneracy} is algorithmically motivated by the following results. In general, PTASes (polynomial-time approximation schemes) for maximization problems are not to be expected on classes of bounded independence degeneracy, as \textsc{Max Weight Independent Set} is $\mathsf{MAX \ SNP}$-hard on subcubic graphs \cite{BF99}. However, these classes allow for constant-factor approximation algorithms for a number of problems, such as \textsc{Max Weight Independent Set}, as shown by Akcoglu et al.~\cite{AADK02}, \textsc{Max Weight Induced
$q$-Colorable Subgraph} and \textsc{Min Weight Vertex Cover}, as shown by Ye and Borodin~\cite{YB12}, and further generalizations of \textsc{Max Weight Independent Set}, as shown by Chekuri and Quanrud~\cite{CQ23}. On the other hand, if one considers fractionally $\tin$-fragile graph classes, then PTASes for a large number of maximization problems exist. Galby et al.~\cite{GMY24} showed that \textsc{$(c, h, \psi)$-Max Weight Induced Subgraph} admits a PTAS on every efficiently fractionally $\tin$-fragile graph class. 
Loosely speaking, this is the problem of finding a maximum-weight induced subgraph with clique number at most $c$ satisfying a fixed $\mathsf{CMSO}_2$ formula $\psi$ expressing an $h$-near-monotone property (the special case $h = 1$ is that of a monotone property). This meta-problem captures several well-known problems, such as \textsc{Max Weight Independent Set} and, more generally, \textsc{Max Weight Induced
$q$-Colorable Subgraph}, and \textsc{Max Weight Induced Forest}.

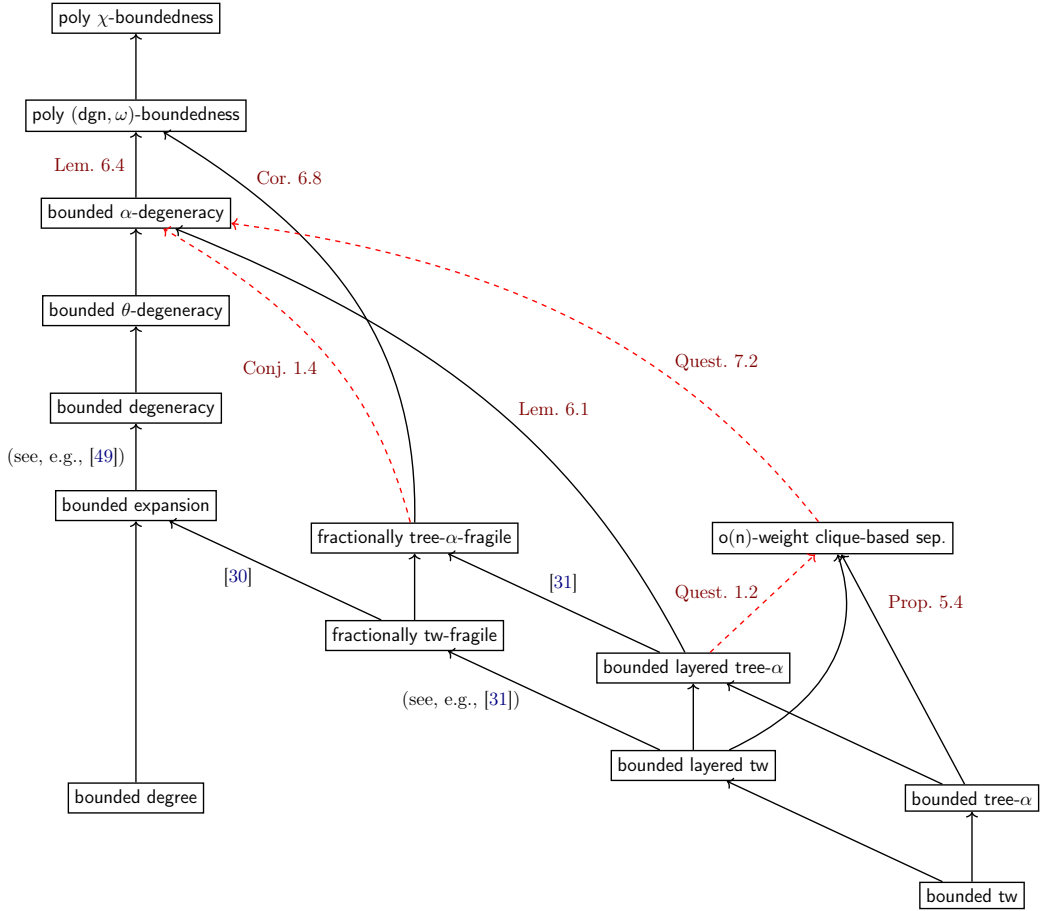
\begin{figure}
\begin{center}
\scalebox{0.65}{\begin{tikzpicture}[scale=.95,yscale=0.7, thick]
\node[rectangle,draw] (ftf) at (10,12) {\textsf{fractionally $\mathsf{tw}$-fragile}};
\node[rectangle,draw] (ftaf) at (10,15) {\textsf{fractionally $\mathsf{tree}\textnormal{-}\alpha$-fragile}};
\node[rectangle,draw] (cbsep) at (19,15) {\textsf{$\mathsf{o(n)}$-weight clique-based sep.}};
\node[rectangle,draw] (layeredtin) at (16,11) {\textsf{bounded layered $\mathsf{tree}\textnormal{-}\alpha$}};
\node[rectangle,draw] (layeredtw) at (16,8) {\textsf{bounded layered $\mathsf{tw}$}};
\node[rectangle,draw] (tin) at (22,7) {\textsf{bounded $\mathsf{tree}\textnormal{-}\alpha$}};
\node[rectangle,draw] (tw) at (22,4) {\textsf{bounded $\mathsf{tw}$}};
\node[rectangle,draw] (exp) at (4,16) {\textsf{bounded expansion}};
\node[rectangle,draw] (degn) at (4,19) {\textsf{bounded degeneracy}};
\node[rectangle,draw] (tdegn) at (4,22) {\textsf{bounded $\theta$-degeneracy}};
\node[rectangle,draw] (adegn) at (4,25) {\textsf{bounded $\alpha$-degeneracy}};
\node[rectangle,draw] (dgnomega) at (4,28) {\textsf{poly $(\dg, \omega)$-boundedness}};
\node[rectangle,draw] (chibound) at (4,31) {\textsf{poly $\chi$-boundedness}};
\node[rectangle,draw] (bdegree) at (4,7) {\textsf{bounded degree}};
\node (layeredtinfractin) at (13.2,13.6) {\cite{GMY23}};
\node (layeredtwfractw) at (11,10) {(see, e.g., \cite{GMY23})};
\node (fractwexp) at (6.2,13.8) {\cite{Dvo16}};
\node (layeredtinalphadeg) at (13,19) {\hyperref[bounded layered tree-alpha bounded alpha-degeneracy]{\textcolor{red!50!black}{Lem.~\ref*{bounded layered tree-alpha bounded alpha-degeneracy}}}};
\node (fractinpolydgnomega) at (7.3,26.1) {\hyperref[fractional tin-fragility poly chi-bounded]{\textcolor{red!50!black}{Cor.~\ref*{fractional tin-fragility poly chi-bounded}}}};
\node (alphadegpolydgnomega) at (3,26.5) {\hyperref[alpha-degeneracy dg-omega-boundedness]{\textcolor{red!50!black}{Lem.~\ref*{alpha-degeneracy dg-omega-boundedness}}}};
\node (expdeg) at (2.5,17.5) {(see, e.g., \cite{NO12})};
\node (conj) at (7.1,20.2) {\hyperref[fractional tin-fragility bounded independence degeneracy]{\textcolor{red!50!black}{Conj.~\ref*{fractional tin-fragility bounded independence degeneracy}}}};
\node (prop) at (21,13) {\hyperref[layeredtreealpha-cbs]{\textcolor{red!50!black}{Prop.~\ref*{layeredtreealpha-cbs}}}};
\node (question) at (16.5,13.3) {\hyperref[questionltreealpha]{\textcolor{red!50!black}{Quest.~\ref*{questionltreealpha}}}};
\node (question) at (16.5,20.4) {\hyperref[clique-based_vs_ideg]{\textcolor{red!50!black}{Quest.~\ref*{clique-based_vs_ideg}}}};

\draw[->](dgnomega) -- (chibound);
\draw[->](adegn) -- (dgnomega);
\draw[->](tdegn) -- (adegn);
\draw[->](degn) -- (tdegn);
\draw[->](exp) -- (degn);
\draw[->](layeredtin) -- (ftaf); 
\draw[->](layeredtw) -- (ftf);
\draw[->](tin) -- (cbsep);
\draw[->,dashed,red](layeredtin) -- (cbsep);
\draw[->](tin) -- (layeredtin); 
\draw[->](layeredtw) -- (layeredtin); 
\draw[->](ftf) -- (ftaf);
\draw[->](ftaf) to [bend right=25] (dgnomega);
\draw[->,dashed,red](cbsep) to [bend right=25] (adegn);
\draw[->,dashed,red](ftaf) to [bend right=20] (adegn);
\draw[->](layeredtw) to [bend right=35] (cbsep);
\draw[->](tw) -- (tin);
\draw[->](tw) -- (layeredtw); 
\draw[->](layeredtin) to [bend right=20] (adegn);
\draw[->](bdegree) -- (exp);
\draw[->](ftf) -- (exp);
\end{tikzpicture}}
\end{center}
\caption{Relationships between the main graph class properties related to the paper, where an arrow represents implication between class properties. The following shorthands are adopted. $\mathsf{tw}$ and $\mathsf{tree}\textnormal{-}\alpha$ are shorthands for treewidth and tree-independence-number, respectively. $\theta$-degeneracy and $\alpha$-degeneracy stand for clique cover degeneracy and independence degeneracy, respectively. We reference only the implications not directly following from the definition. A dashed red arrow indicates that determining whether the corresponding implication holds is, to the best of our knowledge, open.
As for the incomparabilities, the following hold. The class of stars shows that \textsf{bounded $\mathsf{tw}$ $\nsubseteq$ bounded degree}. The class of complete graphs shows that \textsf{bounded $\mathsf{tree}\textnormal{-}\alpha$ $\nsubseteq$ fractionally $\mathsf{tw}$-fragile}. The class of subcubic expanders shows that \textsf{bounded degree $\nsubseteq$ fractionally $\mathsf{tree}\textnormal{-}\alpha$-fragile} and \textsf{bounded degree $\nsubseteq$ $\mathsf{o(n)}$-weight clique-based sep.} (see also \cite[Corollary~38]{GMY24}). The classes in \Cref{ccdvsid} show that \textsf{bounded $\mathsf{tree}\textnormal{-}\alpha$ $\nsubseteq$ bounded $\theta$-degeneracy}.}\label{reldiagram}
\end{figure}

Before properly stating our results, we define the main graph classes addressed in the paper. 

\textit{Bounded genus map graphs.} The class of map graphs has been introduced by Chen et al.~\cite{CGP02} as a generalization of planar graphs. We are interested in a further generalization introduced by Dujmovi\'c et al.~\cite{DEW17}.
Start with a graph $G_0$ embedded in a surface of Euler genus $g$,\footnote{The Euler genus of an orientable surface with $h$ handles is $2h$. 
The Euler genus of a nonorientable surface with $c$ cross-caps is $c$. 
The Euler genus of a graph $G$ is the minimum Euler genus of a surface in which $G$ embeds (with no crossings).} with each face labeled a `nation' or a `lake'. 
Let $G$ be the graph whose vertices are the nations of $G_0$, where two vertices are adjacent in $G$ if the corresponding faces in $G_0$ share a vertex. 
Then $G$ is called a \emph{$g$-map graph}, or simply \textit{map graph} in case $g=0$. 
If, in addition, each vertex of $G_0$ is incident with at most $d$ nations, then $G$ is a \emph{$(g, d)$-map graph}. 
Observe that $g$-map graphs can be equivalently defined as a certain type of \textit{contact graphs}, i.e., intersection graphs of geometrical objects with pairwise disjoint interiors. 
For example, $0$-map graphs are contact graphs of disk homeomorphs in $\mathbb{R}^2$.

Dujmovi\'c et al.~\cite{DEW17} showed that every $(g, d)$-map graph has layered treewidth at most $(2g + 3)(2d + 1)$. This result was strengthened in \cite{DMW23} to show that $(g, d)$-map graphs (as well as several other generalizations of planar graphs) admit the following type of product structure theorem: every graph in the class is isomorphic to a subgraph of the strong product $H \boxtimes P \boxtimes K_\ell$, where $H$ has bounded treewidth, $P$ is a path, and $\ell$ is a constant depending only on the class. 

\textit{Graph powers.} For $d \in \mathbb{N}$, the \textit{$d$-th power} of a graph $G$ is the graph $G^d$ with vertex set $V(G^d) = V(G)$ where, for distinct $u, v \in V(G^d)$, $uv \in E(G^d)$ if and only if $u$ and $v$ are at
distance at most $d$ in $G$. Dujmovi\'c et al.~\cite{DEMWW22} showed that if $G$ is a graph with layered treewidth $k$ and maximum degree $\Delta$, then $G^d$ has layered treewidth less than $2dk\Delta^{\lfloor d/2\rfloor}$. (Let us remark that powers of graphs with bounded layered treewidth and bounded degree also admit a type of product structure theorem as above \cite{DMW23}.) 
This result in turn implies almost-tight bounds for the degeneracy of powers of planar graphs and chordal graphs of bounded degree \cite{AH03,Kra04}. Galby et al.~\cite{GMY24} showed that if $G$ is a graph with layered tree-independence number $k$ and $d$ is a positive integer, then $G^{2d+1}$ has layered tree-independence number at most $(4d+1)k$. Moreover, a similar result cannot hold for even powers.

\textit{Unit disk graphs and generalizations.} 
Intersection graphs of disks of common radius $r$ in the Euclidean plane, known as (Euclidean) unit disk graphs, form a well-studied and structurally rich class of intersection graphs, which we denote by $\text{UDG}$. As already mentioned, such graphs have bounded layered tree-independence number (more precisely, at most $3$), and hence $\mathcal{O}(\sqrt{n})$ tree-independence number, which is tight. In this paper, we consider two interesting generalizations of unit disk graphs, as we explain next.

It is known (see, e.g., \cite[Theorem~11.12]{Lee}) that every complete, simply-connected Riemannian surface of constant sectional curvature $C$ is isometric to either the Euclidean plane (when $C = 0$), or the radius-$R$ sphere (when $0 < C = 1/R^2$), or the radius-$R$ hyperbolic plane (when $0 > C = -1/R^2$). It is then natural to investigate analogues of unit disk graphs on the hyperbolic plane and on the unit sphere. Uniform disk graphs on the hyperbolic plane have been studied by Kisfaludi{-}Bak~\cite{KB20} and 
Bl{\"{a}}sius et al.~\cite{BvdH24}, and the study of uniform disk graphs on the unit sphere was stated as an open problem in \cite{BvdH24}. As we shall see, in contrast to the Euclidean setting, both on the hyperbolic plane and on the unit sphere, different values of the (uniform) radius give rise to different graph classes. Intuitively, the area of a Euclidean disk grows quadratically in the radius, whereas those of a hyperbolic disk and spherical disk grow exponentially and sublinearly in the radius, respectively. 

\textit{Hyperbolic uniform disk graphs.} 
Following \cite{BvdH24}, let $\text{HUDG}(r(n))$ be the class of intersection graphs where each $n$-vertex graph can be realized as the
intersection graph of disks of uniform (i.e., equal) radius $r(n)$ in the hyperbolic plane $\mathbb{H}^2$ of curvature $-1$. 
As mentioned above, the (uniform) radius matters in the hyperbolic plane. Loosely speaking, the smaller the radius the closer the corresponding graph class is to Euclidean unit disk graphs. For example, the classes of hyperbolic uniform disk graphs of radius $1/n^3$ and hyperbolic uniform disk graphs of radius $\log n$ are incomparable, and graphs distinguishing the two are large grids and large stars \cite{BvdH24}. Bl{\"{a}}sius et al.~\cite{BvdH24} referred to the regime $r \in \mathcal{O}(1/\sqrt{n})$ as \textit{almost Euclidean}, whereas the regime $r \in \Omega(\log n)$ is referred to as \textit{firmly hyperbolic}.

Let $\text{HUDG} = \bigcup_{r > 0}\text{HUDG}(r)$ and let $\text{DG}$ denote the class of Euclidean disk graphs. Bl{\"{a}}sius et al.~\cite{BvdH24} observed that $\text{UDG} \subsetneq \text{HUDG} \subsetneq \text{DG}$, where a family of hyperbolic uniform disk graphs which are not unit disk graphs is that of stars. Kisfaludi{-}Bak~\cite{KB20} showed that, for any constant $c$, the class $\text{HUDG}(c)$ admits clique-based separators of size $\mathcal{O}(\log n)$ and, hence, weight $\mathcal{O}(\log^2 n)$. Bl{\"{a}}sius et al.~\cite{BvdH24} then asked the following natural question: How does the structure of hyperbolic uniform disk graphs depend on the radius $r = r(n)$?
In view of this, they extended the previous result to arbitrary radiuses and showed that $n$-vertex hyperbolic uniform disk graphs of radius $r$ admit clique-based separators of size $\mathcal{O}((1 + 1/r)\log n)$. Applying this result to the machinery from \cite{dBKMT23}, they provided an $n^{\mathcal{O}((1 + 1/r)\log^2 n)}$-time algorithm for \textsc{Independent Set}, which is quasi-polynomial for $r \in \Omega(1)$. Moreover, if $r \in \mathcal{O}(1)$, they also provided $n^{\mathcal{O}((1/r)\log n)}$-time algorithms for a number of problems, including \textsc{Independent Set}, \textsc{Feedback Vertex Set}, and \textsc{Dominating Set}. In the case of \textsc{Independent Set}, they were finally able to improve the above to an $n^{\mathcal{O}(1 + (1/r)\log n)}$-time algorithm, which is polynomial for $r \in \Omega(\log n)$.

\textit{Spherical uniform disk graphs.} Similarly to the hyperbolic case, different radiuses give rise to different classes of spherical uniform disk graphs. 
Let $\text{SUDG} = \bigcup_{r > 0}\text{SUDG}(r)$. It can be shown that $\text{UDG} \subsetneq \text{SUDG}$, where the inclusion follows from \cite{JPL21}, and the complete bipartite graph $K_{2,3}$ is easily seen to belong to $\text{SUDG}$ but not to $\text{UDG}$.
In fact, in \Cref{spherical-superclass}, we provide an infinite family of spherical uniform disk graphs which are not unit disk graphs. Note also that there exist graphs which belong to $\text{DG}$ but not to $\text{SUDG}$ (just take $K_{1,t}$ for $t \geq 6$ \cite{MM}). However, it is not clear whether $\text{SUDG} \subseteq \text{DG}$ (see \Cref{quest:SUDG-DG}). Previous research on spherical (uniform) disk graphs has focused on the randomized setting where the disks are randomly sampled (see, e.g., \cite{Mae03,Mae04,MM17} and references therein) and, to the best of our knowledge, little is known about the classic deterministic setting (see \cite{MM}).

\subsection{Our results}

\begin{table}[t]
\centering
\renewcommand{\arraystretch}{1.15}
\begin{tabular}{|l l l l |}
 \hline
  & layered $\tin$ & $\tin$ & size of clique-based sep. \\ [0.5ex]
 \hline\hline
  $g$-map & $\mathcal{O}(g)$ [\cref{maplayered}] & $\mathcal{O}(\sqrt{gn})$ [\cref{treealphamap}] & $\mathcal{O}(\sqrt{n})$ for $g = 0$ \cite{dBKMT23}\\
  UDG & $\mathcal{O}(1)$ \cite{GMY23} & $\mathcal{O}(\sqrt{n})$ \cite{GMY23} &  $\mathcal{O}(\sqrt{n})$ \cite{BBK20} \\
  $\text{HUDG}(r)$ & $\mathcal{O}(\frac{r}{\tanh r})$ [\cref{layeredhyperbolic}] & $\mathcal{O}(\sqrt{n})$ [\cref{treealphahudg}]
  &  $\mathcal{O}(\sqrt{n})$ \cite{BvdH24,BBK20}\\
  & & $\mathcal{O}((1 + \frac{1}{r})\log^2 n)$ [\cref{cliquesep_totreealpha}] & $\mathcal{O}((1+\frac{1}{r})\log n)$ \cite{BvdH24}\\
  $\text{SUDG}(r)$ & $\mathcal{O}(1)$ [\cref{sphericallayered}] & $\mathcal{O}(\sqrt{n})$ [\cref{treealphasudg}] & $\mathcal{O}(\sqrt{n})$ [\cref{cliquebasedcor}] \\
  $\text{DG}$ & $\omega(1)$ \cite{GMY24} & $\mathcal{O}(\sqrt{n})$ [\cref{cliquesep_totreealpha}] & $\mathcal{O}(\sqrt{n})$ \cite{BBK20} \\
  contact segment & $\omega(1)$ [\cref{contactlocal}] & $\mathcal{O}(n^{2/3})$ [\cref{cliquesep_totreealpha}] & $\mathcal{O}(n^{2/3})$ \cite{BBGR24,dBKMT23} \\
\hline
\end{tabular}

\caption{A summary of structural results for geometric intersection graphs. Note that if a clique-based separator has size $\mathcal{O}(f(n))$, then it has weight $\mathcal{O}(f(n)\log n)$. The $\mathcal{O}(\sqrt{n})$ bound on clique-based separators for hyperbolic uniform disk graphs comes from a bound for the more general disk graphs. Similarly, the bound on clique-based separators for contact segment graphs comes from a bound for the more general pseudo-disk graphs. We remark that the upper bounds for disk graphs hold, more generally, for intersection graphs of convex fat objects or similarly-sized fat objects in $\mathbb{R}^d$ \cite{BBK20}.}
\label{summarytable}
\vspace*{-0.1cm}
\end{table}

\textit{Layered tree-independence number.} Our first set of results, proved in \Cref{s:layeredtreealpha}, consists in bounding the layered tree-independence number of the following graph classes (see also \Cref{summarytable}).
\begin{enumerate}[label={\selectfont\textbf{(\Alph*)}},ref=\Alph*]
\item\label{item:map} $g$-map graphs have $\mathcal{O}(g)$ layered tree-independence number;
\item\label{item:power} Powers of bounded layered treewidth graphs have $\mathcal{O}(1)$ layered tree-independence number; 
\item\label{item:hyper} Hyperbolic uniform disk graphs with radius $r$ have $\mathcal{O}(\frac{r}{\tanh r})$ layered tree-independence number;
\item\label{item:spher} Spherical uniform disk graphs with radius $r$ have $\mathcal{O}(1)$ layered tree-independence number.
\end{enumerate}

Results \eqref{item:map} and \eqref{item:power} are in fact special cases of a more general result of independent interest, which we describe next. We introduce a graph operation, called $r$-neighborhood cliquification, which, given a graph $G$ and a subset $P \subseteq V(G)$, consists in cliquifying the $r$-th neighborhood of every vertex $p \in P$, where cliquifying a vertex subset adds all missing edges between vertices in the subset. 
We show that $r$-neighborhood cliquifications of graphs with bounded layered treewidth result in graphs with bounded layered tree-independence number and that, if $P$ is in addition an independent set, $1$-neighborhood cliquifications of graphs with bounded layered tree-independence number result in graphs with bounded layered tree-independence number.

Observe that none of Results \eqref{item:map} to \eqref{item:spher} can be strengthened to bounded layered treewidth, as the corresponding classes contain arbitrarily large cliques.
Moreover, since $n$-vertex graphs with layered tree-independence number $k$ have tree-independence number $2\sqrt{kn}$ (see \Cref{treealphabound}), we immediately obtain bounds for tree-independence number, as summarized in \Cref{summarytable}.
The bounds we obtain for $g$-map graphs and spherical uniform disk graphs are tight up to constant factors, as both classes contain $n\times n$ grids,\footnote{The $n\times n$ grid is the graph with vertex set $\{1,\ldots, n\}^2$, in which two vertices $(x_1,x_2)$ and $(y_1,y_2)$ are adjacent if and only if $|x_1-y_1|+|x_2-y_2| = 1$.} which have $\Omega(\sqrt{n})$ tree-independence number \cite{GMY24}. The case of hyperbolic uniform disk graphs deserves further comments (see also \Cref{hudgconstantlayered}). On the one hand, being a subclass of Euclidean disk graphs, they have $\mathcal{O}(\sqrt{n})$ tree-independence number. On the other hand, applying \Cref{cliquesep_totreealpha} to the clique-based separators of size $\mathcal{O}((1 + 1/r)\log n)$ from \cite{BvdH24}, we get a new bound of $\mathcal{O}((1 + 1/r)\log^2 n)$ for tree-independence number. Hence, the tree-independence number of hyperbolic uniform disk graphs of radius $r$ is $\mathcal{O}(\min \{\sqrt{n}, (1 + 1/r)\log^2 n\})$. This result allows to nicely interpolate between the almost Euclidean case of $r \in \mathcal{O}(1/\sqrt{n})$, where we get $\mathcal{O}(\sqrt{n})$ tree-independence number, thus matching the tight bound for unit disk graphs, and the firmly hyperbolic case $r \in \Omega(\log n)$, where we get $\mathcal{O}(\log n)$ tree-independence number. Note also that our $\mathcal{O}(\frac{r}{\tanh r})$ bound for layered tree-independence number gives a $\mathcal{O}(\sqrt{\frac{r}{\tanh r}}\cdot \sqrt{n})$ bound for tree-independence number, thus recovering the $\mathcal{O}(\sqrt{n})$ bound for $r \in \mathcal{O}(1)$. 

Results \eqref{item:map} and \eqref{item:power}, pipelined with \cite[Lemma~3.2]{DMS24} (see \Cref{ramsey} below), immediately imply that $(g, d)$-map graphs have bounded layered treewidth \cite{DEW17} and that powers of bounded layered treewidth and bounded degree graphs have bounded layered treewidth \cite{DEMWW22}. However, as one should expect, the bounds obtained in this way are worse than those from \cite{DEW17,DEMWW22}. 

It might be tempting to believe that, whenever the $K_r$-free graphs in a class $\mathcal{G}$ have bounded layered treewidth (as is the case when $\mathcal{G}$ consists of $g$-map graphs \cite{DEW17} or of powers of bounded layered treewidth graphs \cite{DEMWW22}), then the class $\mathcal{G}$ itself has bounded layered tree-independence number. This, however, seems not to be the case, in view of recent constructions of Chudnovsky and Trotignon~\cite{CT24} of graph classes whose treewidth is bounded by a function of the clique number while the tree-independence number is unbounded.

Results \eqref{item:hyper} and \eqref{item:spher} show that, in a sense, the structural properties of unit disk graphs carry over both to surfaces of positive and negative curvature. 
The proofs of these two results follow the same strategy and differ from the Euclidean case addressed in \cite{GMY24}. This is somehow to be expected, given that the hyperbolic plane and the unit sphere cannot be isometrically embedded (as metric spaces) into any Euclidean space (see, e.g., \cite{Rob06}). Many equivalent models of the hyperbolic plane have been proposed \cite{RR} and, in the proof of Result \eqref{item:hyper}, we work with the so-called polar-coordinate model. Loosely speaking, we fix a point $o \in \mathbb{H}^2$ and a ray starting at $o$, and we identify every point in the hyperbolic plane by its hyperbolic distance from $o$ and its angle around $o$, relative to the chosen ray. As mentioned, the proof of Result \eqref{item:spher} adopts a similar strategy, where this time we use an analogue polar-coordinate model for $\mathbb{S}^2$. 

Given that $0$-map graphs (which coincide with contact graphs of disk homeomorphs in $\mathbb{R}^2$) have bounded layered tree-independence number, it is natural to ask whether the same holds for \emph{contact string graphs}, that is, contact graphs of simple curves in $\mathbb{R}^2$ (i.e., contact graphs of homeomorphs of the unit interval $[0,1]$ in $\mathbb{R}^2$). We answer negatively by showing that even contact graphs of segments in $\mathbb{R}^2$ have unbounded layered tree-independence number. Hence, there exist contact string graphs which are not $0$-map graphs. We complement this by showing the following separation result: There exist $0$-map graphs which are not contact string graphs. 

\textit{Clique cover degeneracy.} In \Cref{s:ccdeg}, we bound the clique cover degeneracy of the graph classes with bounded layered tree-independence number discussed above and of contact string graphs. Bl{\"{a}}sius et al.~\cite{BvdH24} showed that hyperbolic uniform disk graphs have clique cover degeneracy at most $3$. Extending a result of Ye and Borodin~\cite{YB12}, we show that $0$-map graphs have clique cover degeneracy at most $3$, which is tight. We also show that spherical uniform disk graphs and contact string graphs have clique cover degeneracy at most $6$ and $4$, respectively. 

\textit{Toward \Cref{questionltreealpha}.} In \Cref{s:towardsquestion}, we observe how combining boundedness of layered tree-independence number and of clique cover degeneracy implies the existence of clique-based separators of sublinear weight, thus answering \Cref{questionltreealpha} in the positive for the aforementioned graph classes. More generally, we observe that the following (incomparable) classes with bounded layered tree-independence number admit clique-based separators of sublinear weight: every class of bounded layered tree-independence number with a subquadratic $\theta$-binding function,
every class of bounded layered treewidth,
every class of bounded tree-independence number.

\textit{Independence degeneracy.} In \Cref{sec:indep_deg}, we consider \Cref{fractional tin-fragility bounded independence degeneracy}. Although we fall short of proving the conjecture, we observe that the following relaxation holds: Every fractionally $\tin$-fragile graph class $\mathcal{G}$ is  polynomially $(\dg,\omega)$-bounded, i.e., the degeneracy of every induced subgraph $H$ of a graph $G \in \mathcal{G}$ is bounded by a polynomial function of the clique number of $H$.  

\subsection{Algorithmic consequences}

In this section we highlight the main algorithmic consequences of our work. First, since $g$-map graphs have bounded layered tree-independence number, the machinery from \cite{GMY24} implies that \textsc{$(c, h, \psi)$-Max Weight Induced Subgraph} and \textsc{Max Weight Distance-$d$ Packing} admit PTASes on $g$-map graphs. We refer the reader to \cite{GMY24} for the definitions of these two meta-problems. 

Recall now that de Berg et al.~\cite{dBKMT23} (see also \cite{BBK20}) give $2^{\mathcal{O}(w_{\mathcal{G}}(n))}$-time algorithms for \textsc{Independent Set} and \textsc{Feedback Vertex Set} on every class $\mathcal{G}$ of geometric intersection graphs admitting polynomial-time computable clique-based separators of weight $w_{\mathcal{G}}(n)$. Combining this with the fact that $0$-map graphs admit clique-based separators of weight $\mathcal{O}(\sqrt{n})$ \cite{dBKMT23} and that the same holds for disk graphs \cite{BBK20}, results in $2^{\mathcal{O}(\sqrt{n})}$-time algorithms for these two classes. Analogous arguments give $2^{\mathcal{O}(n^{2/3}\log n)}$-time algorithms for pseudo-disk graphs \cite{dBKMT23}.

A possible drawback of the techniques from \cite{BBK20,dBKMT23} for classes with small
clique-based separators is that they seem ill-suited to deal with \textit{weighted} problems in general (an exception is given by \textsc{Max Weight Independent Set}, see \cite{BBK20}). In fact, de Berg and Kisfaludi{-}Bak~\cite{BK20} asked to determine the complexity of the weighted versions of problems falling in the framework of \cite{BBK20} when restricted to intersection graphs of (similarly-sized) fat objects in $\mathbb{R}^2$. Toward this, they showed that \textsc{Min Weight Dominating Set} cannot be solved in $2^{o(n)}$ time on unit ball graphs in $\mathbb{R}^3$, unless the ETH fails. Hence, the weighted versions could be considerably harder. 

We observe that the aforementioned algorithms of Lima et al.~\cite{LMMORS24}, combined with our bounds for tree-independence number from \Cref{summarytable}, allow us to obtain the following algorithms in the \textit{weighted} setting. 

\begin{corollary}\label{algos} \textsc{Max Weight Distance-$d$ Packing}, for fixed even $d\in\mathbb{N}$, and \textsc{Min Weight Feedback Vertex Set} admit algorithms with running time: 
\begin{itemize}
\item $2^{\mathcal{O}(\sqrt{n}\log n)}$, for $g$-map graphs, spherical uniform disk graphs, and intersection graphs of convex fat objects or similarly-sized fat objects in $\mathbb{R}^k$, for any $k\geq 2$;
\item $2^{\mathcal{O}(n^{2/3}\log n)}$, for contact segment graphs and, more generally, pseudo-disk graphs;
\item $\min \big\{2^{\mathcal{O}(\sqrt{n}\log n)}, 2^{\mathcal{O}((1 + \frac{1}{r})\log^3 n)}\big\}$, for hyperbolic uniform disk graphs with radius $r$, for any $r$.
\end{itemize}
\end{corollary}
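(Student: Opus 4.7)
The plan is to combine two ingredients: (i) the $n^{\mathcal{O}(k)}$-time algorithms of Lima et al.~\cite{LMMORS24} for \textsc{Max Weight Distance-$d$ Packing} (for fixed even $d$) and \textsc{Min Weight Feedback Vertex Set} on $n$-vertex graphs of tree-independence number at most $k$; and (ii) the tree-independence-number bounds for the classes in question, which are already compiled in \Cref{summarytable}. Any upper bound $\tin \le k(n)$ plugged into (i) yields running time $n^{\mathcal{O}(k(n))} = 2^{\mathcal{O}(k(n)\log n)}$, so each bullet reduces to identifying the right $k(n)$ for the class involved.

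For the first bullet, I would appeal to Result \eqref{item:map} together with the bound $\tin \le 2\sqrt{kn}$ from \Cref{treealphabound} to obtain $\tin = \mathcal{O}(\sqrt{n})$ for $g$-map graphs (with $g$ fixed); Result \eqref{item:spher} and \Cref{treealphabound} give the same for spherical uniform disk graphs; and for intersection graphs of convex fat or similarly-sized fat objects in $\mathbb{R}^k$, I would apply \Cref{cliquesep_totreealpha} to the $\mathcal{O}(\sqrt{n})$-size clique-based separators of \cite{BBK20} to conclude $\tin = \mathcal{O}(\sqrt{n})$. For the second bullet, contact segment graphs are pseudo-disk graphs, and \Cref{cliquesep_totreealpha} with $f(n) = n^{2/3}$ (the shrinking condition holds since $\beta^{2/3} < 1$) applied to the $\mathcal{O}(n^{2/3})$-size separators of \cite{dBKMT23,BBGR24} yields $\tin = \mathcal{O}(n^{2/3})$. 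For the third bullet, hyperbolic uniform disk graphs are Euclidean disk graphs, so \cite{BBK20} combined with \Cref{cliquesep_totreealpha} gives $\tin = \mathcal{O}(\sqrt{n})$; applying \Cref{cliquesep_totreealpha} to the $\mathcal{O}((1 + 1/r)\log n)$-size clique-based separators of \cite{BvdH24} gives the alternative $\tin = \mathcal{O}((1 + 1/r)\log^2 n)$, and taking the minimum of the two running times yields the stated $\min\{2^{\mathcal{O}(\sqrt{n}\log n)}, 2^{\mathcal{O}((1+1/r)\log^3 n)}\}$.

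The main obstacle, and the only place the argument is not purely a compilation, is \emph{constructivity}: the algorithms of \cite{LMMORS24} require an actual tree decomposition of the claimed independence number on input. The structural proofs in \Cref{s:layeredtreealpha} are constructive, and the separator procedures of \cite{BBK20,BvdH24,dBKMT23,BBGR24} are polynomial-time, so the Bodlaender-style recursion underlying \Cref{cliquesep_totreealpha} can be executed in polynomial time. Confirming this bookkeeping in each of the listed classes will be the only part of the proof that goes beyond plugging bounds into the Lima et al.\ algorithms.
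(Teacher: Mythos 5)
Your proposal matches the paper's own (implicit) justification: the corollary is stated as a direct combination of the $n^{\mathcal{O}(k)}$-time algorithms of Lima et al.\ with the tree-independence-number bounds compiled in \Cref{summarytable} (obtained from \Cref{treealphamap,treealphasudg,treealphahudg} and from \Cref{cliquesep_totreealpha} applied to the known clique-based separators), exactly as you describe. Your remark on constructivity is consistent with the paper, since the corollaries in \Cref{s:layeredtreealpha} explicitly provide polynomial-time computable witnessing decompositions.
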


\Cref{algos} thus allows us to extend, in a simple and uniform way, several subexponential-time algorithms from \cite{BBK20,dBKMT23} to the weighted setting. The running times obtained in this way either match (in the case of pseudo-disk graphs) or are a $\log n$ factor off (in the case of $0$-map graphs and intersection graphs of convex fat objects or similarly-sized fat objects in $\mathbb{R}^k$) those for the unweighted problems from \cite{BBK20,dBKMT23}. Note, however, that our results for map graphs are slightly more general, as they hold for every $g$-map graph with $g \geq 0$.

Consider now hyperbolic uniform disk graphs with radius $r$. \Cref{algos} shows that a large number of weighted problems, including \textsc{Min Weight Feedback Vertex Set}, admit quasi-polynomial-time algorithms on such graphs when $r \in \Omega(1/\mathsf{polylog}(n))$. Recall the following two results of Bl{\"{a}}sius et al.~\cite{BvdH24} for graphs in $\text{HUDG}(r)$: \textsc{Independent Set} admits an $n^{\mathcal{O}(1 + (1/r)\log n)}$-time algorithm, for any $r$, whereas \textsc{Feedback Vertex Set} admits an $n^{\mathcal{O}((1/r)\log n)}$-time algorithm, for $r \in \mathcal{O}(1)$. \Cref{algos} can then be viewed as extending the latter result and showing that both \textsc{Max Weight Independent Set} and \textsc{Min Weight Feedback Vertex Set} are quasi-polynomial-time solvable when $r \in \Omega(1/\mathsf{polylog}(n))$. It also partially answers a question of Bl{\"{a}}sius et al.~\cite{BvdH24}, who asked to investigate how the complexity of problems other than \textsc{Independent Set} on hyperbolic uniform disk graphs scales with the radius of the disks.

It seems reasonable that \Cref{algos} can be further extended to the meta-problem \textsc{$(c, \psi)$-Max Weight Induced Subgraph}\footnote{This is just \textsc{$(c, h, \psi)$-Max Weight Induced Subgraph} without any monotonicity constraint.} defined in \cite{LMMORS24}, where an algorithm with running time $n^{\mathcal{O}(R(8k+1,c+1))}$ is provided; here $k$ is the tree-independence number of the input graph, and $R(8k+1,c+1)$ is upper-bounded by a polynomial in $k$ of degree $c$. In order to obtain subexponential running time for the graph classes in \Cref{algos}, it would be enough to show that the above problem is solvable in time $n^{\mathcal{O}(k)}$.   

\section{Preliminaries}\label{s:prelim}

We let $\mathbb{N} = \{1,2,3,\ldots\}$. In this paper we
consider only finite simple graphs. For $r \geq 0$, the \textit{$r$-th neighborhood} of a vertex $v\in V(G)$, denoted by $N_G^r[v]$, is the set of vertices at distance at most $r$ in $G$ from $v$. Given a subset $U\subseteq V(G)$, we let $N_G^r[U] = \bigcup_{u\in U}N_G^r[u]$. For simplicity, we may sometimes identify a set $S$ of vertices of a graph $G$ with the corresponding induced subgraph $G[S]$. 

Given a nonnull graph $G$, we denote by $\dg(G)$ the \emph{degeneracy} of $G$, defined as the maximum, over all nonnull induced subgraphs $H$ of $G$, of the minimum degree of $H$.
The \emph{independence degeneracy} of $G$ is defined as the maximum, over all nonnull induced subgraphs $H$ of $G$, of the minimum, over all vertices $v\in V(H)$, of the independence number of the subgraph of $H$ induced by $N_H(v)$.
Equivalently, the independence degeneracy of $G$ is the smallest integer $k$ such that every nonnull induced subgraph of $G$ has a vertex that is not the center of an induced star with more than $k$ leaves.
Let us note that this parameter was dubbed \emph{inductive independence number} by Ye and Borodin~\cite{YB12}. The \textit{clique cover degeneracy} of $G$ is defined similarly to the independence degeneracy of $G$, except that the independence number is replaced by the clique cover number. 

Given a graph $G$, the independence number of $G$ is denoted by $\alpha(G)$, the clique number of $G$ is denoted by $\omega(G)$, the chromatic number of $G$ is denoted by $\chi(G)$, and the clique cover number of $G$ is denoted by $\theta(G)$. A graph class $\mathcal{G}$ is said to be \emph{$\chi$-bounded} if it admits a \emph{$\chi$-binding function}, that is, a function $f$ such that for every graph $G$ in the class and every induced subgraph $H$ of $G$, it holds that $\chi(H)\le f(\omega(H))$ (see~\cite{Gyarfas87,SS20}).
Furthermore, if there exists a polynomial function with this property, then the class is said to be \emph{polynomially $\chi$-bounded}. A graph class $\mathcal{G}$ is said to be \emph{$\theta$-bounded} if it admits a \emph{$\theta$-binding function}, that is, a function $f$ such that for every graph $G$ in the class and every induced subgraph $H$ of $G$, it holds that $\theta(H)\le f(\alpha(H))$ (see~\cite{Gyarfas87}). 
Observe that $\chi(H) \leq f(\omega(H))$ if and only if $\theta(\overline{H}) \leq f(\alpha(\overline{H}))$ and so a graph class $\mathcal{G}$ is $\chi$-bounded with $\chi$-binding function $f$ if and only if the complementary graph class $\{\overline{G}\colon G \in \mathcal{G}\}$ is $\theta$-bounded with $\theta$-binding function $f$.   

A restricted version of $\chi$-boundedness is the following.
We say that a graph class $\mathcal{G}$ is \emph{$(\dg,\omega)$-bounded} 
(resp., \emph{polynomially $(\dg,\omega)$-bounded}) if there exists a function $f$ (resp., a polynomial function) such that for every graph $G$ in the class and every induced subgraph $H$ of $G$, it holds that $\dg(H)\le f(\omega(H))$.
While not every $\chi$-bounded graph class is polynomially $\chi$-bounded \cite{BDW24}, every $(\dg,\omega)$-bounded graph class is polynomially $(\dg,\omega)$-bounded (see, e.g.,~\cite[Corollary 3.5]{du2024survey}). 

A \textit{tree decomposition} of a graph $G$ is a pair $\mathcal{T} = (T, \{X_t\}_{t\in V(T)})$, where $T$ is a tree whose every node $t$ is assigned a vertex subset $X_t \subseteq V(G)$, called \textit{bag}, such that the following conditions are satisfied: 
\begin{description}
\item[(T1)] Every vertex of $G$ belongs to at least one bag; 
\item[(T2)] For every $uv \in E(G)$, there exists a bag containing both $u$ and $v$; 
\item[(T3)] For every $u \in V(G)$, the subgraph $T_u$ of $T$ induced by $\{t \in V(T)\colon u \in X_t\}$ is connected. 
\end{description}
The \textit{width} of $\mathcal{T} = (T, \{X_t\}_{t\in V(T)})$ is the maximum value of $|X_t| - 1$ over all $t \in V(T)$. The \textit{treewidth} of a graph $G$, denoted $\tw(G)$, is the minimum width of a tree decomposition of $G$. The \textit{independence number} of $\mathcal{T}$, denoted $\alpha(\mathcal{T}$), is the quantity $\max_{t\in V(T)} \alpha(G[X_t])$. The \textit{tree-independence number} of a graph $G$, denoted $\tin(G)$, is the minimum independence number of a tree decomposition of $G$ (see \cite{DMS24}). For a tree decomposition $\mathcal{T} = (T, \{X_t\}_{t\in V(T)})$, we let $|\mathcal{T}| = |V(T)|$. For positive integers $p$ and $q$, the \textit{Ramsey number} $R(p,q)$ is the smallest integer $n_0$ such that every graph with at least $n_0$ vertices contains
either a clique of size $p$ or an independent set of size $q$. The following holds.

\begin{lemma}[Dallard et al.~\cite{DMS24}]\label{ramsey} Let $G$ be a graph and let $k\in \mathbb{N}$. If $\tin(G) \leq k$, then $\tw(G) \leq R(\omega(G) + 1, k + 1) - 2$.    
\end{lemma}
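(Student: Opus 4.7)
The plan is to take any tree decomposition $\mathcal{T} = (T, \{X_t\}_{t \in V(T)})$ of $G$ witnessing $\tin(G) \leq k$, and bound the size of every bag via Ramsey's theorem. The intuition is straightforward: a bag cannot contain a large independent set (by the independence-number bound on the decomposition) and cannot contain a clique larger than $\omega(G)$ (because it is an induced subgraph of $G$), so by the definition of the Ramsey number its size must be bounded.

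More precisely, I would first fix $\mathcal{T}$ with $\alpha(\mathcal{T}) \leq k$, so that $\alpha(G[X_t]) \leq k$ for every $t \in V(T)$. Next, for each bag $X_t$, I observe the two complementary facts: $G[X_t]$ contains no independent set of size $k+1$ (by the choice of $\mathcal{T}$), and $G[X_t]$ contains no clique of size $\omega(G)+1$ (since $G[X_t]$ is an induced subgraph of $G$, and clique number is monotone under taking induced subgraphs). By the definition of the Ramsey number $R(\omega(G)+1, k+1)$, every graph on at least $R(\omega(G)+1, k+1)$ vertices must contain either a clique of size $\omega(G)+1$ or an independent set of size $k+1$. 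Therefore,
\[
|X_t| \leq R(\omega(G)+1, k+1) - 1 \quad \text{for every } t \in V(T).
\]

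Since this holds for every bag, the width of $\mathcal{T}$ is at most $R(\omega(G)+1, k+1) - 2$, and hence $\tw(G) \leq R(\omega(G)+1, k+1) - 2$, as desired. There is no real obstacle here: the statement is essentially a one-line consequence of the definitions combined with Ramsey's theorem, and the only thing to be careful about is that the bound on $\omega(G[X_t])$ indeed uses $\omega(G)$ rather than some quantity depending on $X_t$, which is automatic from $G[X_t]$ being an induced subgraph of $G$.
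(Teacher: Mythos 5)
Your proof is correct and is precisely the standard argument behind this lemma: the paper itself only cites it from Dallard et al.\ \cite{DMS24}, where the proof proceeds the same way, namely each bag $X_t$ of a decomposition with independence number at most $k$ excludes both a clique of size $\omega(G)+1$ and an independent set of size $k+1$, hence $|X_t| \leq R(\omega(G)+1,k+1)-1$ and the width is at most $R(\omega(G)+1,k+1)-2$.
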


Let $\mathcal{T} = (T,\{X_t\}_{t \in V(T)})$ be a tree decomposition of a graph $G$.
The \emph{layered width} of $\mathcal{T}$ is the minimum integer $\ell$ such that there exists a layering $(V_0,V_1,\ldots)$ of $G$ such that for each bag $X_t$ and each layer $V_i$, it holds that $|X_t\cap V_i|\le \ell$.
Similarly, the \emph{layered independence number} of $\mathcal{T}$ is the minimum integer $\ell$ such that there exists a layering $(V_0,V_1,\ldots)$ of $G$ such that for each $t\in V(T)$ and each $i\ge 0$, it holds that $\alpha(G[X_t\cap V_i])\le \ell$.
The \emph{layered treewidth} (resp., layered tree-independence number) of a graph $G$ is defined as the minimum layered width (resp., layered independence number) of a tree decomposition of $G$.
The notions of layered treewidth and layered tree-independence number were introduced by Dujmovi{\'c} et al.~\cite{DMW17} and Galby et al.~\cite{GMY24}, respectively. We say that a tree decomposition $\mathcal{T} = (T,\{X_t\}_{t \in V(T)})$ and a layering $(V_0,V_1,\ldots)$ of a graph $G$ \textit{witness} layered treewidth (resp., layered tree-independence number) $\ell$ if, for each bag $X_t$ and layer $V_i$, we have that $|X_t\cap V_i| \leq \ell$ (resp., $\alpha(G[X_t \cap V_i]) \leq \ell$). 
The following holds.

\begin{lemma}[Galby et al.~\cite{GMY24}]\label{treealphabound} Let $k \in \mathbb{N}$ and let $G$ be a graph on $n$ vertices. Given a tree decomposition $\mathcal{T}$ of $G$ and a layering $(V_0,V_1,\ldots)$ of $G$ witnessing layered tree-independence number $k$, it is possible to compute, in $\mathsf{poly}(n,|\mathcal{T}|)$ time, a tree decomposition of $G$ with independence number at most $2\sqrt{kn}$. In particular, every $n$-vertex graph with layered tree-independence number $k$ has tree-independence number at most $2\sqrt{kn}$.
\end{lemma}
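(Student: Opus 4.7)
The plan is to apply a ``strip decomposition'' argument parameterised by an integer $s$, to be chosen at the end. Applying pigeonhole to the identity $\sum_i |V_i| = n$ gives a residue class $j \in \{0, 1, \ldots, s-1\}$ with $|L| \le n/s$, where $L := \bigcup_{i \equiv j \pmod s} V_i$. Write $L_m := V_{j+ms}$ for the ``special'' layers and $S_m := V_{j+ms+1} \cup \cdots \cup V_{j+(m+1)s-1}$ for the ``strips'' of $s-1$ layers lying between consecutive special ones. The layering property forces two useful facts: no edge of $G$ has its endpoints in two distinct strips, and any edge from $S_m$ to $L$ actually lands in $L_m \cup L_{m+1}$.

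I would then build a new tree decomposition whose underlying tree consists of one copy of $T$ per (nonempty) strip index $m$, with consecutive copies glued via an edge $(r,m)(r,m+1)$ through an arbitrary fixed $r \in V(T)$. At a node $(t,m)$ I place the bag
\[
Y_{t,m} \;:=\; (X_t \cap S_m) \cup L_m \cup L_{m+1}.
\]
Axioms (T1) and (T2) follow routinely from the layering property and the corresponding axioms for $\mathcal{T}$. For (T3): if $v \in S_m$, the bags containing $v$ reproduce the subtree $T_v$ inside the $m$-th copy of $T$, which is connected; if $v \in L_m$, then $v$ sits in \emph{every} bag of the $(m-1)$-th and $m$-th copies (because the full layer $L_m$ was placed in each of them), and these two copies are tied together by the gluing edge.

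It remains to bound the independence number of each new bag. By subadditivity of $\alpha$ on disjoint induced subgraphs, the layered-witness inequality $\alpha(G[X_t \cap V_i]) \le k$ applied to each of the $s-1$ layers in $S_m$, and the trivial $\alpha(G[L_m]) \le |L_m|$,
\[
\alpha(G[Y_{t,m}]) \;\le\; (s-1)k + |L_m| + |L_{m+1}|.
\]
Since $\sum_m |L_m| \le n/s$, we obtain $\alpha(G[Y_{t,m}]) \le (s-1)k + n/s$. Taking $s := \lceil \sqrt{n/k}\,\rceil$ gives $(s-1)k < \sqrt{nk}$ and $n/s \le \sqrt{nk}$, hence $\alpha(G[Y_{t,m}]) \le 2\sqrt{kn}$, as claimed. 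Realising everything in $\mathsf{poly}(n,|\mathcal{T}|)$ time is straightforward, since the new tree has $|\mathcal{T}|$ times at most $\mathcal{O}(n/s)$ nodes and each bag is computed in $\mathcal{O}(n)$ time.

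The only real subtlety is the seam design: the choice to place the \emph{entire} special layers $L_m$ and $L_{m+1}$ in $Y_{t,m}$ (rather than just their intersections with $X_t$) is what makes (T3) across the gluing edge automatic, without having to individually route each vertex of $L_m$ through a bag containing it. The cost is adding $|L_m| + |L_{m+1}|$ to the independence number of each bag, and this is exactly what the pigeonhole choice of $j$ is designed to afford.
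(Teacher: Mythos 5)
Your proof is correct and follows essentially the same strip-grouping argument behind this lemma in Galby et al.~\cite{GMY24} (the present paper only cites the result): choose an offset by pigeonhole so that the selected ``special'' layers have total size at most $n/s$, restrict $\mathcal{T}$ to each strip of $s-1$ intermediate layers, pad each bag with the two bounding special layers, and optimize $s=\lceil\sqrt{n/k}\rceil$ to get $(s-1)k+n/s\le 2\sqrt{kn}$. The only point to tidy is the degenerate bookkeeping (e.g.\ a nonempty special layer both of whose adjacent strips are empty, or $s=1$ when $k\ge n$), which is handled trivially by keeping a copy of $T$ for every strip index in range rather than only for nonempty strips.
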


Fractional $\tw$-fragility was introduced by Dvo\v{r}{\'{a}}k~\cite{Dvo16}. The following equivalent definition is taken from \cite{GMY23}. Let $p$ be a width parameter in $\{\tw, \tin\}$. For $\beta \leq 1$, a \textit{$\beta$-general cover} of a graph $G$ is a multiset $\mathcal{C}$ of subsets of $V(G)$ such that each vertex belongs to at least $\beta|\mathcal{C}|$ elements of the cover. The \textit{$p$-width} of the cover is $\max_{C \in \mathcal{C}}p(G[C])$. For a parameter $p$, a graph class $\mathcal{G}$ is \textit{fractionally $p$-fragile} if there exists a function $f\colon\mathbb{N}\rightarrow\mathbb{N}$ such that, for every $r\in\mathbb{N}$, every $G \in \mathcal{G}$ has a $(1 - 1/r)$-general cover with $p$-width at most $f(r)$. A fractionally $p$-fragile class $\mathcal{G}$ is \textit{efficiently fractionally $p$-fragile} if there exists an algorithm that, for every $r\in\mathbb{N}$ and $G \in \mathcal{G}$, returns in $\mathsf{poly}(|V(G)|)$ time a $(1 - 1/r)$-general cover $\mathcal{C}$ of $G$ and, for each $C \in \mathcal{C}$, a tree decomposition of $G[C]$ of width (if $p = \tw$) or independence number (if $p = \tin$) at most $f(r)$, for some function $f\colon\mathbb{N}\rightarrow\mathbb{N}$.

We finally provide the short proof of \Cref{cliquesep_totreealpha}, which we restate for convenience.

{\renewcommand\footnote[1]{}\observ*}

\begin{proof} Suppose that the clique-based separators of graphs in $\mathcal{G}$ are $\beta$-balanced, for some fixed $\beta < 1$. Let $G \in \mathcal{G}$ be an $n$-vertex graph. Consider a separation $(A, B)$ of $G$ with separator $A\cap B$ as in the statement. By assumption, $\max\{|A\setminus B|, |B\setminus A|\} \leq \beta n$ and there is no edge between $|A\setminus B|$ and $|B\setminus A|$. Given any tree decompositions of $G[A\setminus B]$ and $G[B\setminus A]$, we can combine them into a tree decomposition of $G$ by adding the vertices in $A \cap B$ to all bags of both decompositions, and arbitrarily connecting the two decomposition trees with an edge. Hence, denoting by $g(n)$ the maximum tree-independence number of an $n$-vertex graph in $\mathcal{G}$, we obtain that $g(n) \leq g(\lfloor\beta n\rfloor) + \mathcal{O}(f(n))$. Now, the first assertion in the statement follows from a simple inductive argument, whereas the second assertion follows from the Discrete master theorem (see, e.g., \cite{KL21}).
\end{proof}

\section{Bounded layered tree-independence number}\label{s:layeredtreealpha}

In this section we identify several graph classes with bounded layered tree-independence number.
We begin, in \Cref{neighbclique}, by introducing a graph transformation called $r$-neighborhood cliquification and show that $r$-neighborhood cliquifications of graphs of bounded layered treewidth result in graphs of bounded layered tree-independence number. We then provide two examples of well-known graph classes that can be
obtained by $r$-neighborhood cliquifications of graphs of bounded layered treewidth: powers of bounded layered treewidth graphs (\Cref{powersbounds}) and $g$-map graphs (\Cref{mapbounds}). However, the bounds for layered tree-independence number thus obtained are not optimal and we improve them in the corresponding sections. 

In \Cref{sec:hyperbolic,sec:spherical}, we finally bound layered tree-independence number for hyperbolic and spherical uniform disk graphs, respectively, two generalizations of unit disk graphs.

\subsection{$r$-neighborhood cliquifications}\label{neighbclique}

Given a graph $G$ and a subset $P \subseteq V(G)$, by \textit{cliquifying $P$} we denote the operation that adds all missing edges between vertices in $P$, resulting in a new graph. Given a graph $G$, a subset $P \subseteq V(G)$ and an integer $r \geq 1$, the \textit{$r$-neighborhood cliquification on $(G,P)$} is the operation
that consists in cliquifying $N^r_G[p]$, for each $p \in P$. More precisely, the resulting graph $H$ is the graph with $V(H)  = V(G)$ and such that, for distinct $u,v \in V(H)$, $uv\in E(H)$ if and only if $uv\in E(G)$ or there exists $p\in P$ such that $\{u,v\} \subseteq N^r_G[p]$. In the following, a $1$-neighborhood cliquification will be simply referred to as a \textit{neighborhood cliquification}.

\begin{theorem}\label{from-bdd-ltw-to-bdd-lta}
Let $G$ be a graph, let $P \subseteq V(G)$, let $r$ be a positive integer, and let $G'$ be the graph obtained by the $r$-neighborhood cliquification on $(G,P)$. 
Given a tree decomposition $\mathcal{T}$ and a layering $(V_1, V_2, \ldots)$ of $G$ witnessing layered treewidth $k$, it is possible to compute, in time \hbox{$\mathcal{O}(|\mathcal{T}|\cdot (|V(G)|+|E(G)|))$}, a tree decomposition $\mathcal{T}'$ with $|\mathcal{T}'| = |\mathcal{T}|$ and a layering of $G'$ witnessing layered tree-independence number $4rk$.

In addition, if $r = 1$ and $P$ is an independent set in $G$, the same conclusion holds even if $\mathcal{T}$ and $(V_1, V_2, \ldots)$ witness layered tree-independence number $k$ of $G$.    
\end{theorem}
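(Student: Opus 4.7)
The plan is to keep the underlying tree $T$ of $\mathcal{T}$ unchanged and to enlarge each bag so as to cover the new cliques of $G'$, then pass to a coarser layering obtained by merging $2r$ consecutive old layers. Concretely, for each $t\in V(T)$ I would set
\[ X'_t \;=\; X_t \;\cup\; \bigcup_{p\in P\cap X_t} N^r_G[p], \]
so that whenever $p\in P$ sits in an original bag, the entire clique $N^r_G[p]$ of $G'$ is included in the corresponding new bag. This gives T1 and T2 for $G'$ for free, and computing all $X'_t$'s reduces to BFS-explorations of depth $r$ from the $P$-vertices in each bag, fitting the stated $\mathcal{O}(|\mathcal{T}|\cdot(|V(G)|+|E(G)|))$ budget. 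Property T3 is automatic for $r=1$ (the setting of the ``in addition'' clause): whenever $p\in P$ is within distance $1$ of $v$, the edge $pv$ forces $T_v\cap T_p\neq\varnothing$, so the set $T_v\cup\bigcup_{p\in P\cap N_G(v)}T_p$ of bags containing $v$ is a subtree of $T$. For $r\geq 2$ some intermediate vertices on a $v$-to-$p$ path may fall outside $P$, so I would restore T3 either by adding $v$ to all bags on the minimal subtree of $T$ joining $T_v$ to $T_p$, or by the alternative definition $X'_t=N^r_G[X_t]$ whose T3 follows from a standard subtree-chaining argument along each shortest $G$-path of length at most $r$.

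For the layering I would take $V'_j=V_{2rj}\cup V_{2rj+1}\cup\cdots\cup V_{2rj+2r-1}$. Any new edge of $G'$ joins two vertices of some $N^r_G[p]$, and they therefore lie at $G$-distance at most $2r$ and differ by at most $2r$ in original-layer index; a direct check then gives that they differ by at most one in the new-layer index, so $(V'_j)_{j\geq 0}$ is a valid layering of $G'$.

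For the bound on $\alpha(G'[X'_t\cap V'_j])$ I would take an independent set $I$ of $G'$ contained in $X'_t\cap V'_j$ and decompose it as $I=I_P\sqcup I_{\neg P}$, where $I_P=I\cap\bigcup_{p\in P\cap X_t}N^r_G[p]$. Since $N^r_G[p]$ is a clique of $G'$, each $p\in P\cap X_t$ contributes at most one vertex to $I_P$; only those $p$ whose $r$-neighborhood meets $V'_j$ are relevant, which confines $p$ to a window of $4r$ consecutive old layers, and the layered-treewidth hypothesis $|X_t\cap V_i|\leq k$ bounds their number. By construction $X'_t\setminus\bigcup_p N^r_G[p]\subseteq X_t$, so $I_{\neg P}\subseteq X_t\cap V'_j$ is an independent set of $G$ and is controlled through $\alpha(G[X_t\cap V_i])\leq k$ over $2r$ consecutive old layers. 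Combining the two contributions gives the claimed $4rk$ bound. For the ``in addition'' clause, $P$ being independent yields $|P\cap X_t\cap V_i|\leq\alpha(G[X_t\cap V_i])\leq k$, so exactly the same counting goes through from the weaker layered-tree-independence assumption. I anticipate the main obstacle to be reconciling the $r\geq 2$ fix of T3 with the independence bound: if one augments bags along internal paths of $T$ to secure T3, one must check that these additional vertices still lie within the layered budget, which I plan to handle either by routing through $P$-witnesses or by a careful analysis under the alternative $X'_t=N^r_G[X_t]$ in which the clique structure of $G'$ forces near-injectivity of the witness map on any independent set.
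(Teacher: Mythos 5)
For $r\geq 2$ your proposal has a genuine gap, and it is exactly at the point you defer to the end. The primary bags $X'_t=X_t\cup\bigcup_{p\in P\cap X_t}N^r_G[p]$ satisfy (T1), (T2) and, for $r=1$, (T3), but for $r\geq 2$ (T3) fails and neither proposed repair is carried through. The fallback $X'_t=N^r_G[X_t]$ does restore (T3) but destroys the bound: take $G$ to be a star $K_{1,m}$ (with, if you want $P\neq\varnothing$, a pendant path of length $r$ attached to the center whose endpoint is the unique vertex of $P$), with the obvious tree decomposition and layering witnessing layered treewidth $1$; then every enlarged bag contains all $m$ leaves, these lie in a single merged layer and are pairwise nonadjacent in $G'$, so the produced pair witnesses layered independence number $m$, not $\mathcal{O}(rk)$. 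The ``near-injectivity of the witness map'' you invoke under this alternative is precisely what fails: independent vertices of $G'$ that are far from $P$ can share a witness in $X_t$ without being adjacent in $G'$. The other repair---adding $v$ to the bags along the $T$-path joining $T_v$ to $T_p$---can be made to work, but only via the observation that every bag on that path separates $v$ from $p$ in $G$ and hence contains a vertex of a shortest $v$--$p$ path, i.e.\ a vertex of $X_t$ at $G$-distance at most $r$ from $v$; this is exactly the paper's construction ($u$ is put into $Y_t$ whenever a fixed shortest $u$--$p$ path meets $X_t$), and one must then still prove that the resulting witness map is injective on every independent set of $G'$ (and, in the $r=1$, $P$-independent case, that the witnesses form an independent set of $G$). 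That injectivity argument, a case analysis via the triangle inequality on the fixed shortest paths, is the technical heart of the paper's proof and is absent from your proposal.

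Separately, even in the $r=1$ setting where your construction is sound, the counting as written gives $6rk$, not $4rk$: at most $4rk$ relevant vertices $p$ plus at most $2rk$ vertices in $I_{\neg P}$. To reach $4rk$ you must count the two parts jointly: map each vertex of $I_P$ to a witness $p\in P\cap X_t$ (injective, since $N^r_G[p]$ is a clique of $G'$ and $I$ is independent), note that these witnesses are disjoint from $I_{\neg P}$ (by definition $I_{\neg P}$ meets no $N^r_G[p]$ with $p\in P\cap X_t$), and observe that the witnesses together with $I_{\neg P}$ all lie in $X_t$ inside a window of $4r$ consecutive old layers, so $|I|\leq 4rk$ by $|X_t\cap V_i|\leq k$. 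In the ``$r=1$ and $P$ independent'' case this union is moreover an independent set of $G$ (the witnesses are pairwise nonadjacent since $P$ is independent, and nonadjacent to $I_{\neg P}$ by its definition), so $\alpha(G[X_t\cap V_i])\leq k$ over the $4$ relevant layers gives the bound. This part is a fixable slip, but as stated your argument does not establish the claimed constant.
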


\begin{proof}
Let $\mathcal{T} = (T,\{X_t\}_{t\in V(T)}\})$ be a tree decomposition of $G$ and let $(V_1, V_2, \ldots)$ be a layering of $G$ witnessing layered treewidth $k$ (or layered tree-independence number $k$, if $r=1$ and $P$ is an independent set).
We first show how to construct a layering of $G'$. For each $i \geq 1$, let $W_i = \bigcup_{j = 1}^{2r} V_{2(i-1)r+j}$.

\begin{nitem}\label{contactgraphtheoremlayering-new}
    $(W_1, W_2, \ldots)$ is a layering of $G'$.
\end{nitem}

\begin{claimproof}[Proof of \eqref{contactgraphtheoremlayering-new}]
Clearly, $(W_1, W_2, \ldots)$ is a partition of $V(G')$. Let $uv \in E(G')$. We distinguish two cases. Suppose first that $uv \in E(G)$. Then $u$ and $v$ belong to layers from $(V_1, V_2, \ldots)$ that are at distance at most $1$, and so the same holds for the layers from $(W_1, W_2, \ldots)$ they belong to. 
Suppose now that $uv \not \in E(G)$. 
Since $uv \in E(G')$, there exists $p\in P$ such that $\{u,v\} \subseteq N_{G}^r[p]$. 
This implies that $u$ and $p$ belong to layers from $(V_1, V_2, \ldots)$ that are at distance at most $r$, and the same holds for $v$ and $p$. 
Hence, $u$ and $v$ belong to layers from $(V_1, V_2, \ldots)$ that are at distance at most $2r$, and so the layers from $(W_1, W_2, \ldots)$ they belong to are at distance at most $1$.  
\end{claimproof}
 
We now build a tree decomposition $\mathcal{T}' = (T,\{Y_t\}_{t\in V(T)})$ of $G'$ as follows. 
For each $u\in V(G)$ and $p\in P$ such that $u \in N^r_G[p]$, fix a shortest $u, p$-path in $G$. We denote the set of vertices of such a path as $V(u,p)$. 
For each $t \in V(T)$, we define $Y_t$ as the set of all vertices $u \in V(G')$ such that either $u \in X_t$ or there exist $v\in X_t$ and $p\in P$ with $v\in V(u,p)$. 
In other words, letting $Y_t^- = X_t$ and $Y_t^+ = {\{u\in V(G')\colon  V(u,p) \cap X_t\neq\varnothing \text{ for some } p\in P\}}$, we have that $Y_t = Y_t^-\cup Y_t^+$.
Note that, for every $t\in V(T)$, the set $Y_t^+$ is a subset of $N_G^r[P]$.

\begin{nitem}\label{contactgraphtheoremtreedecomposition-new}
    $\mathcal{T}'$ is a tree decomposition of $G'$.
\end{nitem}
    
\begin{claimproof}[Proof of \eqref{contactgraphtheoremtreedecomposition-new}]
        Consider first (T1). Let $v\in V(G')$. Since $\mathcal{T}$ is a tree decomposition of $G$, there exists $t\in V(T)$ such that $v \in X_t$. Then $v\in Y_t^-$ and so $v\in Y_t$.
       
        Consider now (T2). Let $u$ and $v$ be two adjacent vertices in $G'$.  
        Suppose first that $uv \in E(G)$. 
        Since $\mathcal{T}$ is a tree decomposition of $G$, there exists $t \in V(T)$ such that $\{u,v\} \subseteq X_t$, and so $\{u,v\} \subseteq Y_t^- \subseteq Y_t$. 
        Suppose next that $uv \not \in E(G)$. Since $u$ and $v$ are adjacent in $G'$, there exists $p\in P$ such that $\{u,v\} \subseteq N_{G}^r[p]$. 
        But there exists $t\in V(T)$ such that $p\in X_t$, and so the intersections $V(u,p) \cap X_t$ and $V(v,p) \cap X_t$ are both nonempty, since they contain $p$, implying that $\{u,v\}\subseteq Y^+_t$ and so $\{u,v\}\subseteq Y_t$. 
        
        Consider finally (T3). 
        For each vertex $v \in V(G)$, let $T_v$ and $T'_v$ denote the subgraphs of $T$ induced by $\{t \in V(T)\colon v \in X_t\}$ and $\{t \in V(T)\colon v \in Y_t\}$, respectively. 
        Let $u$ be an arbitrary vertex of $G'$.
        Suppose first that $u \not \in N^r_G[p]$ for every $p\in P$. Hence, $u$ does not belong to any set $Y_t^+$ with $t\in V(T)$ (since each such set is a subset of $N_G^r[P]$), and so the condition $u\in Y_t$ is equivalent to the condition $u\in X_t$, implying that $T'_u = T_u$ is a tree. 
        
        Suppose finally that $u \in N^r_G[p]$ for at least one vertex $p\in P$ and let $P^{u} = \{p \in P : u \in N^r_G[p]\}$.
        Observe first that $V(T'_u) = \bigcup_{p\in P^{u}}\bigcup_{v\in V(u,p)}V(T_v)$. 
        We now show that, for every $p \in P^{u}$, the set $\bigcup_{v\in V(u,p)}V(T_v)$ induces a tree. To this end, let $V(u,p) = \{v_1,\ldots,v_k\}$ and recall that $V(u,p)$ induces a $u, p$-path in $G$. Suppose that $v_1=u$, $v_k=p$, and $v_i$ is adjacent to $v_{i-1}$ for every $i\in \{2,\ldots, k\}$. 
        Let $V_i^u = \bigcup_{j=1}^{i}V(T_{v_j})$. 
        We claim that $V_i^u$ induces a tree for every $i\in \{1,\ldots, k\}$.
        We proceed by induction on $i$.
        The base case $i=1$ follows from the fact that $V_1^u = V(T_u)$ induces a tree. 
        Assume now inductively that $V_i^u$ induces a tree.
        Consider $V_{i+1}^u$, which is the union of $V_i^u$ and $V(T_{v_{i+1}})$, both of which induce trees. 
        Since $v_{i+1}$ is adjacent to $v_{i}$ in $G$, there is a node of $T$ whose bag $X_t$ contains $\{v_{i+1}, v_i\}$, and this node belongs to both $V_i^u$ and $V(T_{v_{i+1}})$.
        Hence, $V_{i+1}^u$ induces a connected subgraph, completing the induction step.    Letting $i = k$ in the previous claim, we obtain that $\bigcup_{v\in V(u,p)}V(T_v)$ induces a tree for every $p \in P^{u}$. Since all these trees contain $V(T_u)$, we conclude that $V(T'_u) = \bigcup_{p\in P^{u}}\bigcup_{v\in V(u,p)}V(T_v)$ indeed induces a tree. 
        \end{claimproof}

Note that the computation of the layering $(W_1,W_2,\ldots)$ and of the tree decomposition $\mathcal{T}'$ of $G'$ can be done in $\mathcal{O}(|\mathcal{T}|\cdot (|V(G)|+|E(G)|))$ time by performing a breadth-first search up to distance $r$ in $G$ from each bag $X_t$.

It remains to show that $\alpha(G'[Y_t \cap W_i]) \leq 4rk$ for each bag $Y_t$ and layer $W_i$. To this end, consider an arbitrary bag $Y_t$ and layer $W_i$ and let $A' = \{a_1', a_2', \ldots\}$ be an independent set of $G'[Y_t \cap W_i]$. For each $a_j' \in A'$, we do the following.
If $a_j' \in Y_t^-$, then let $a_j = a_j'$. 
Otherwise, $a_j' \in Y_t^+ \setminus Y_t^-$, and so there exists $p\in P$ and $v\in X_t$ such that $v\in V(a_j',p)$. In this case, let $a_j = v$, where $v$ is an arbitrary such vertex. Note that $v \neq a_j'$, since otherwise $a_j'\in Y_t^-$. Finally, let $A = \{a_1, a_2, \ldots\}$. 
By construction, $A \subseteq X_t$ and $|A|\le |A'|$.

\begin{nitem}\label{contactgraphtheoremb}
    $|A| = |A'|$. Moreover, if $r=1$ and $P$ is an independent set in $G$, then $A$ is an independent set in $G$.
\end{nitem}
   
\begin{claimproof}[Proof of \eqref{contactgraphtheoremb}] Consider the first assertion.
Take an arbitrary pair $a_j', a_\ell' \in A'$ with $a_j'\neq a_\ell'$. 
We aim to show that $a_j \neq a_\ell$. 
This clearly holds if $a_j = a_j'$ and $a_\ell = a_\ell'$. 
Hence, we may assume without loss of generality that $a_j \neq a_j'$. 
This means that there exists $p_j \in P$ such that $a_j \in V(a_j',p_j)$.
Suppose, to the contrary, that $a_j = a_\ell$. Then, $a_\ell \in N^r_{G}[p_j]$.
If $a_\ell = a_\ell'$, then $\{a_\ell',a_j'\} \subseteq N^r_G[p_j]$, and so $a_\ell'$ and $a_j'$ are adjacent in $G'$, contradicting the fact that $A'$ is an independent set of $G'$.
Hence, $a_\ell \neq a_\ell'$ and so there exists $p_\ell \in P$ such that $a_\ell \in V(a_\ell',p_\ell)$. Observe that $p_\ell \neq p_j$, or else $\{a_\ell',a_j'\} \subseteq N^r_G[p_\ell]$ and $a_\ell'$ and $a_j'$ are adjacent in $G'$, a contradiction.
Now, since $a_j \in V(a_j',p_j)$ and $a_\ell \in V(a_\ell',p_\ell)$, we have that $d_G(a_j',a_j) + d_G(a_j,p_j) \leq r$ and $d_G(a_\ell',a_\ell) + d_G(a_\ell,p_\ell) \leq r$.
Therefore, since $a_j = a_\ell$, either $d_G(a_j',a_j) + d_G(a_j,p_\ell) \leq r$ or $d_G(a_\ell',a_j) + d_G(a_j,p_j) \leq r$.
If the former holds, then $a_j' \in N_G^r[p_\ell]$, so $a_j'$ and $a_\ell'$ are adjacent in $G'$, a contradiction.
If the latter holds, then $a_\ell' \in N_G^r[p_j]$, so $a_\ell'$ and $a_j'$ are adjacent in $G'$, a contradiction again. This shows that $a_j \neq a_\ell$ and so $|A| = |A'|$.

Consider now the second assertion. Let $r=1$ and let $P$ be an independent set in $G$. Observe that, since $r=1$, $V(a_i', p) = \{a_i', p\}$ for every $a_i' \in A'$ and $p \in P$. Suppose, to the contrary, that there exist $a_j, a_\ell \in A$ with $a_j a_\ell \in E(G)$. 
Since $A'$ is an independent set of $G'$, we cannot have both $a_j' = a_j$ and $a_\ell = a_\ell'$. Moreover, since $P$ is an independent set in $G$, one of $a_j$ and $a_\ell$ does not belong to $P$, say without loss of generality $a_\ell \notin P$. But then $a_\ell = a_\ell'$, or else there exists $p \in P$ such that $a_\ell = v$ for some $v \in V(a_\ell',p) = \{a_\ell', p\}$, a contradiction.
Since $a_\ell = a_\ell'$, the previous observation implies that $a_j \neq a_j'$, from which we obtain that $a_j \in P$ and $a_j' \in N_G(a_j)$.
Since $a_ja_\ell \in E(G)$, we conclude that $\{a_\ell', a_j '\} \subseteq N_G(a_j)$, so $a_\ell'$ and $a_j'$ are adjacent in $G'$, a contradiction.\end{claimproof}

\begin{nitem}\label{contactgraphtheoremc} Each $a_j \in A$ belongs to $\bigcup_{s = 1-r}^{3r} V_{2(i-1)r+s}$.
\end{nitem}

\begin{claimproof}[Proof of \eqref{contactgraphtheoremc}] 
Let $a_j \in A$. If $a_j' \in Y_t^-$, then $a_j = a_j' \in W_i \subseteq \bigcup_{s = 1}^{2r} V_{2(i-1)r+s}$.
Otherwise, $a_j' \in Y_t^+ \setminus Y_t^-$, and so $a_j \in V(a_j',p)$ for some $p\in P$. Since $V(a_j',p)$ induces a shortest $a_j', p$-path in $G$ and $d_G(a_j',p) \leq r$, it must be that $d_G(a_j',a_j) \leq r$. Hence, $a_j$ and $a_j'$ belong to layers from $(V_1, V_2, \ldots)$ that are at distance at most $r$. Since $a_j' \in  \bigcup_{s = 1}^{2r} V_{2(i-1)r+s}$, we conclude that $a_j \in \bigcup_{s = 1-r}^{3r} V_{2(i-1)r+s}$.
\end{claimproof}

We can finally show the two assertions in the statement. 
Suppose, to the contrary, that $\alpha(G'[Y_t \cap W_i])\ge 4rk+1$. 
By \eqref{contactgraphtheoremb}, \eqref{contactgraphtheoremc}, and the pigeonhole principle, there exists a layer $V_j$ containing at least $k+1$ vertices of $A$. 
This implies that $|X_t \cap V_j| \geq k+1$ and, if in addition $r=1$ and $P$ is an independent set in $G$, that $\alpha(G[X_t \cap V_j]) \geq k+1$, contradicting the corresponding assumptions.
\end{proof}

\subsection{Powers of bounded layered treewidth graphs}\label{powersbounds}

\Cref{from-bdd-ltw-to-bdd-lta} can be used to show that powers of bounded layered treewidth graphs have bounded layered tree-independence number. The argument is as follows. Let $G$ be a graph with layered treewidth $k$. Consider first the case of even powers of $G$. Add a pendant vertex to each $v \in V(G)$ in order to obtain a graph $G'$ and let $P = V(G')\setminus V(G)$ be the set of newly added vertices. Observe that, for $r \geq 2$, the graph obtained by the $r$-neighborhood cliquification on $(G', P)$ followed by deleting $P$ is isomorphic to $G^{2(r-1)}$. Moreover, $G'$ has layered treewidth at most $2k$. To see this, take a tree decomposition $\mathcal{T}$ and a layering $(V_0, V_1, \ldots)$ of $G$ witnessing layered treewidth $k$. For each $u \in V(G')\setminus V(G)$, add $u$ to every bag of $\mathcal{T}$ its unique neighbor belongs to, and add $u$ to the layer in $(V_0, V_1, \ldots)$ its unique neighbor belongs to. This gives a tree decomposition and a layering of $G'$ witnessing layered treewidth $2k$. But then \Cref{from-bdd-ltw-to-bdd-lta} implies that $G^{2(r-1)}$  has layered tree-independence number at most $8rk$. 

Consider now the case of odd powers of $G$. Build $G'$ from $G$ by adding, for every $e \in E(G)$, a new vertex adjacent precisely to the endpoints of $e$ and let $P = V(G')\setminus V(G)$. Observe that the graph obtained by the $r$-neighborhood cliquification on $(G', P)$ followed by deleting $P$ is isomorphic to $G^{2r-1}$. Similarly to the previous paragraph, it is easy to see that $G'$ has layered treewidth at most $k + \binom{k}{2}$. But then \Cref{from-bdd-ltw-to-bdd-lta} implies that $G^{2r-1}$ has layered tree-independence number at most $4r(k + \binom{k}{2})$.

In the following, we improve on these bounds by directly constructing a tree decomposition and a layering of a power of $G$. The construction we give is similar to the one used in the proof of \cite[Lemma 7]{DEMWW22} showing that powers of graphs with bounded layered treewidth and bounded degree have bounded layered treewidth. 

\begin{theorem}\label{powers}
Let $G$ be a graph and let $d\ge 2$ be an integer.
Given a tree decomposition $\mathcal{T}$ and a layering $(V_1, V_2, \ldots)$ of $G$ witnessing layered treewidth $k$, it is possible to compute, in time $\mathcal{O}(|\mathcal{T}|\cdot (|V(G)|+|E(G)|))$, a tree decomposition $\mathcal{T}^d$ with $|\mathcal{T}^d| = |\mathcal{T}|$ and a layering of the graph $G^{d}$ witnessing layered tree-independence number $2dk$ if $d$ is even, and $(2d-1)k$ if $d$ is odd.
\end{theorem}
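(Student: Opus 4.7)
The plan is to reuse the tree $T$, enlarge each bag by a small breadth-first neighborhood, and group the original layers into blocks. Setting $s = \lfloor d/2 \rfloor$, I would define $\mathcal{T}^d = (T, \{Y_t\}_{t\in V(T)})$ with $Y_t = N_G^s[X_t]$, and $W_i = \bigcup_{j=1}^{d} V_{(i-1)d + j}$ for each $i \ge 1$. Both objects can be produced in $\mathcal{O}(|\mathcal{T}|(|V(G)|+|E(G)|))$ time by running one breadth-first search of depth $s$ from each bag $X_t$, and $|\mathcal{T}^d| = |\mathcal{T}|$ by construction.

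Next I would verify that $(W_1, W_2, \ldots)$ is a layering of $G^d$: any edge $uv \in E(G^d)$ satisfies $d_G(u,v) \le d$, so the original layers of $u$ and $v$ differ by at most $d$, placing them in the same or adjacent blocks of size $d$. For the tree-decomposition axioms of $\mathcal{T}^d$, (T1) is immediate, and (T3) follows because the set $\{t : u \in Y_t\}$ equals $\bigcup_{w \in N_G^s[u]} T_w$ and is connected by chaining subtrees along shortest $G$-paths inside $N_G^s[u]$, exactly as in the proof of \Cref{from-bdd-ltw-to-bdd-lta}. For (T2), given $uv \in E(G^d)$, I fix a shortest $u,v$-path in $G$ of length $\ell \le d$: if $\ell = 2s'$ is even, then $s' \le s$ and the middle vertex of the path is at distance at most $s$ from both endpoints, so any bag $X_t$ containing it witnesses $u,v \in Y_t$; if $\ell = 2s' + 1$ is odd, then again $s' \le s$, the middle edge $w_{s'} w_{s'+1}$ lies in some common bag $X_t$ of $\mathcal{T}$ since $\mathcal{T}$ is a tree decomposition of $G$, and $u \in N_G^{s}[w_{s'}]$ together with $v \in N_G^{s}[w_{s'+1}]$ gives $u,v \in Y_t$.

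The core step is to bound $\alpha(G^d[Y_t \cap W_i])$. Given an independent set $A'$ of $G^d[Y_t \cap W_i]$, for each $a \in A'$ I would pick an arbitrary $v_a \in X_t$ with $d_G(a, v_a) \le s$. The map $a \mapsto v_a$ is injective: if $v_a = v_b$ for distinct $a, b \in A'$, then $d_G(a, b) \le 2s$, which equals $d$ when $d$ is even and $d - 1 < d$ when $d$ is odd; in either case $ab \in E(G^d)$, contradicting independence of $A'$. Every $v_a$ lies in $X_t \cap V_m$ for some $m$ in a window of $d + 2s$ consecutive layer indices (the $d$ indices composing $W_i$, inflated by $s$ on each side), and each such intersection has at most $k$ vertices by hypothesis. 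This gives $|A'| \le (d + 2s)k$, which evaluates to $2dk$ when $d$ is even and $(2d-1)k$ when $d$ is odd.

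The subtle point — and the only reason the radius of $Y_t$ is $\lfloor d/2 \rfloor$ rather than $\lceil d/2 \rceil$ — is (T2) for odd $d$, where the middle of a shortest path of length $2s+1$ is an edge rather than a vertex; handling it requires the availability of a bag of $\mathcal{T}$ containing both endpoints of that edge, which the tree-decomposition property of $\mathcal{T}$ supplies. Using the smaller radius $\lfloor d/2 \rfloor$ is precisely what makes the inequality $2s < d$ strict in the odd case and thus delivers the sharper $(2d-1)k$ bound.
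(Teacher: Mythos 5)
Your proposal is correct and follows essentially the same route as the paper: the same blocks $W_i$ of $d$ consecutive layers, the same enlarged bags $N_G^{\lfloor d/2\rfloor}[X_t]$ on the same tree, the same parity split for (T2) (middle vertex for even path length, middle edge in a common bag for odd), and the same counting window of $d+2\lfloor d/2\rfloor$ layers for the final bound. The only cosmetic difference is that the paper bounds $\alpha(G^d[Z_t\cap W_i])$ by exhibiting a cover of $Z_t\cap W_i$ by at most $(d+2\lfloor d/2\rfloor)k$ cliques of $G^d$ (balls of radius $\lfloor d/2\rfloor$ around bag vertices, which also yields the clique-cover strengthening used later in \Cref{layeredcliquepower}), whereas you phrase the same observation as an injection from an independent set into $X_t$ restricted to that window.
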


\begin{proof}
Let $\mathcal{T} = (T,\{X_t\}_{t\in V(T)}\})$ be a tree decomposition of $G$ and let $(V_1, V_2, \ldots)$ be a layering of $G$ witnessing layered treewidth $k$.
We first show how to construct a layering of $G^d$. 
For each $i \geq 1$, let $W_i = \bigcup_{j = 1}^dV_{(i-1)d+j}$, that is, the sets $W_i$ are obtained by merging $d$ consecutive layers of $G$.

\begin{nitem}\label{powerlayering}
    $(W_1, W_2, \ldots)$ is a layering of $G^d$.
\end{nitem}

\begin{claimproof}[Proof of \eqref{powerlayering}]
Clearly, $(W_1, W_2, \ldots)$ is a partition of $V(G) = V(G^d)$. 
Let $uv \in E(G^d)$. 
Since there exists a $u,v$-path of length at most $d$ in $G$, the vertices $u$ and $v$ belong to layers from $(V_1, V_2, \ldots)$ that are at distance at most $d$.
Hence, $u$ and $v$ belong to layers from $(W_1, W_2, \ldots)$ that are at distance at most $1$.
\end{claimproof}
We now build a tree decomposition $\mathcal{T}^d = (T,\{Z_t\}_{t\in V(T)})$ of $G^d$ as follows. 
For each $t \in V(T)$, we define $Z_t$ to be the set of all vertices of $G^d$ that are at distance at most $\lfloor d/2\rfloor$ from $X_t$ in $G$.

\begin{nitem}\label{powerstreedecomposition}
    $\mathcal{T}^d$ is a tree decomposition of $G^d$.
\end{nitem}

\begin{claimproof}[Proof of \eqref{powerstreedecomposition}]
        Consider first (T1). 
        Let $v\in V(G^d)$. 
        Since $\mathcal{T}$ is a tree decomposition of $G$, there exists $t\in V(T)$ such that $v \in X_t$. 
        Hence, $v\in Z_t$.
        
        Consider now (T2). 
        Let $u$ and $v$ be two adjacent vertices in $G^d$.  
        The distance between $u$ and $v$ in $G$ is at least $1$ and at most $d$.
        Let $P = v_0\cdots v_q$ be a shortest $u,v$-path in $G$ with $u = v_0$ and $v = v_q$. Clearly, $1\le q\le d$.
        Let now $j = \lfloor q/2\rfloor$ and note that $j\le \lfloor d/2\rfloor$.
        We distinguish two cases according to the parity of $q$.
        Suppose first that $q$ is even.
        Then, $j = q/2$.
        Since $\mathcal{T}$ is a tree decomposition of $G$, there exists $t\in V(T)$ such that $v_j \in X_t$. 
        Consequently, since $u = v_0$ and $v = v_q$ are both at distance at most $j \le \lfloor d/2\rfloor$ from $v_j$ in $G$, we have that $\{u,v\}\subseteq Z_t$.
        Suppose now that $q$ is odd.
        Then, $j = (q-1)/2$.
        Consider the edge $v_jv_{j+1}$.
        Since $\mathcal{T}$ is a tree decomposition of $G$, there exists $t\in V(T)$ such that $\{v_j,v_{j+1}\} \subseteq X_t$. 
        Consequently, since $u = v_0$ and $v = v_q$ are both at distance at most $j\le \lfloor d/2\rfloor$ from $\{v_j,v_{j+1}\}$ in $G$, we have that $\{u,v\}\subseteq Z_t$.
         
        Consider finally (T3). 
        For each vertex $v \in V(G)$, let $T_v$ and $T^d_v$ denote the subgraphs of $T$ induced by $\{t \in V(T)\colon v \in X_t\}$ and $\{t \in V(T)\colon v \in Z_t\}$, respectively. 
        Let $u$ be an arbitrary vertex of $G^d$. 
        Recall that if $u \in Z_t$, then there exists a vertex $v \in X_t$ such that the distance between $u$ and $v$ in $G$ is at most $\lfloor d/2\rfloor$.    
        Hence, $T^d_u$ is the union of the tree $T_u$ and all the trees $T_v$ such that the distance between $u$ and $v$ in $G$ is at most $\lfloor d/2\rfloor$.
        Using the fact that $\mathcal{T}$ is a tree decomposition of $G$ and induction on $j$, it can be shown that for all $j\ge 1$, the union of the tree $T_u$ and all the trees $T_v$ such that the distance between $u$ and $v$ in $G$ is at most $j$, is connected.
        Hence, in particular, this is the case for $T^d_u$.
\end{claimproof}

Note that the computation of the layering $(W_1,W_2,\ldots)$ and of the tree decomposition $\mathcal{T}^d$ of $G^d$ can be done in the stated time, $\mathcal{O}(|\mathcal{T}|\cdot (|V(G)|+|E(G)|))$, by performing a breadth-first search up to distance $\lfloor d/2\rfloor$ in $G$ from each bag $X_t$.

It remains to show that, for each bag $Z_t$ and layer $W_i$, it holds that $\alpha(G^d[Z_t \cap W_i]) \leq 2dk$ if $d$ is even, and that $\alpha(G^d[Z_t \cap W_i]) \leq (2d-1)k$ if $d$ is odd.
We do this by showing that the set $Z_t \cap W_i$ is a union of $2dk$ or $(2d-1)k$ cliques in $G^d$, respectively.
Consider such a set $Z_t \cap W_i$.
Recall that the set $W_i$ is a union of $d$ consecutive layers of $G$.
Furthermore, the definition of $Z_t$ implies that each vertex $u\in Z_t \cap W_i$ is at distance at most $\lfloor d/2\rfloor$ in $G$ from some vertex $v\in X_t$.
Since $(V_1,V_2,\ldots)$ is a layering of $G$, any two such vertices $u$ and $v$ belong to layers at distance at most $\lfloor d/2\rfloor$.
Consequently, there exists a set $J$ of consecutive positive integers such that $|J|\le d+2\lfloor d/2\rfloor$ and $Z_t\cap W_i$ is a subset of the set of vertices at distance at most $\lfloor d/2\rfloor$ in $G$ from  the set $\bigcup_{j\in J}(X_t\cap V_j)$.
Note that for each vertex $v\in X_t$, the set of vertices at distance at most $\lfloor d/2\rfloor$ from $v$ in $G$ is a clique in $G^d$.
Since the set $\bigcup_{j\in J}(X_t\cap V_j)$ has cardinality at most $|J|k$, it follows that $Z_t\cap W_i$ is a union of at most $|J|k$ cliques in $G^d$.
To complete the proof, it suffices to observe that $|J|\le d+2\lfloor d/2\rfloor$ is bounded from above by $2d$ if $d$ is even, and by $2d-1$ if $d$ is odd.
\end{proof}

\begin{remark}\label{layeredcliquepower} The proof of \Cref{powers} actually shows something stronger than boundedness of layered tree-independence number. Indeed, each intersection between a bag and a layer not only has small independence number but also small clique cover number.
\end{remark}

Note that \Cref{powers} together with \Cref{ramsey} imply that, for every class of graphs with bounded maximum degree and bounded layered treewidth, fixed powers of graphs in the class also have bounded layered treewidth, as proved in \cite[Lemma~7]{DEMWW22} (with a better bound than what follows from our approach). 

\subsection{Map graphs and contact string graphs}\label{mapbounds}

A second application of \Cref{from-bdd-ltw-to-bdd-lta} concerns the class of $g$-map graphs, which can be characterized as follows.
Consider a bipartite graph $H$ with a bipartition $(X,Y)$.
The \emph{half-square graph} $H^2[X]$ is the graph with vertex set $X$, where two vertices are adjacent if and only if they are adjacent in $H$ to a common vertex in $Y$.
Then a graph $G$ is a $g$-map graph if and only if there exists a bipartite graph $H$ of Euler genus at most $g$ with a bipartition $(X,Y)$ such that $G$ is the half-square graph $H^2[X]$ (see~\cite[Lemma 5.1]{DEW17}), and $H$ is called a \textit{witness} of $G$.
Note that in this case $G$ is the result of a neighborhood cliquification on $(H,Y)$ followed by deleting $Y$. Moreover, graphs of bounded Euler genus have bounded layered treewidth, as implied by the following result that can be deduced from the proof of \cite[Theorem~12]{DMW17} (see also \cite[Lemma~5.3]{DEW17}).

\begin{theorem}[Dujmovi\'{c} et al.~\cite{DMW17}]\label{layeredtwplanar} 
Let $g \geq 0$. 
Given a connected $n$-vertex graph $G$ of Euler genus at most $g$, it is possible to compute, in $\mathsf{poly}(n,g)$ time, a tree decomposition $\mathcal{T}$ with $|\mathcal{T}| =\mathcal{O}(n)$ and a BFS layering $(V_1, V_2, \ldots)$ of $G$ witnessing layered treewidth $2g+3$.
\end{theorem}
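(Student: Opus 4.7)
The plan is to construct, simultaneously, a BFS layering of $G$ and a tree decomposition $\mathcal{T}$ whose every bag is a union of at most $2g+3$ root-paths in a fixed BFS spanning tree. Since every root-path meets every BFS layer in at most one vertex, this bound on the number of root-paths per bag immediately yields layered treewidth at most $2g+3$.

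\emph{Setup.} First, I would compute an embedding of $G$ in a surface $\Sigma$ of Euler genus at most $g$ (this can be done in $\mathsf{poly}(n,g)$ time for any fixed $g$). Pick an arbitrary vertex $r$ and run BFS from $r$; let $V_i$ be the set of vertices at distance $i-1$ from $r$ (this is the desired BFS layering) and let $T$ be the resulting BFS spanning tree. For every $v\in V(G)$, write $P_v$ for the unique $r$-to-$v$ path in $T$; then $|P_v\cap V_i|\le 1$ for every $i$. The key invariant to maintain is therefore: every bag is of the form $\bigcup_{v\in S}P_v$ for some vertex set $S$ of size at most $2g+3$.

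\emph{Planar base case $g=0$.} After triangulating $G$ inside $\Sigma$ by adding dummy edges (which preserves the BFS layering and can only increase layered treewidth), I would apply an Eppstein-style tripod decomposition: for the triangulation with spanning tree $T$ there is a non-tree edge $uv$ whose fundamental cycle $P_u\cup P_v\cup\{uv\}$ balances $G$, splitting it into two planar pieces $G_1,G_2$. Taking the root bag to be $P_u\cup P_v\cup P_w$, where $w$ is the third vertex of a triangle incident to $uv$, and recursing on $G_1$ and $G_2$ produces a tree decomposition with at most three root-paths per bag and hence layered treewidth at most $3$.

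\emph{Inductive step on Euler genus.} For $g\ge 1$, I would find a non-contractible cycle $C$ in the embedded graph whose edges lie in $T$ except for at most one chord, so that $C\subseteq P_u\cup P_v\cup\{uv\}$ for some non-tree edge $uv$ whose fundamental cycle is non-contractible. Cutting $\Sigma$ along $C$ produces a surface $\Sigma'$ of Euler genus at most $g-1$ on which the cut graph $G'$ (with the vertices of $C$ duplicated along the cut) embeds. By induction, $G'$ admits a tree decomposition whose every bag uses at most $2(g-1)+3$ root-paths of a BFS tree of $G'$, which may be taken to agree with $T$ outside the cut. Re-identifying the duplicated vertices and adding $P_u\cup P_v$ to every bag yields a tree decomposition of $G$ with at most $2g+3$ root-paths per bag, as required.

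\emph{Bookkeeping.} BFS and the triangulation step run in time $\mathcal{O}(gn)$ by Euler's formula; the planar tripod recursion produces $\mathcal{O}(n)$ bags in polynomial time, and the genus reduction is applied at most $g$ times, so the final decomposition satisfies $|\mathcal{T}|=\mathcal{O}(n)$ and the total running time is $\mathsf{poly}(n,g)$.

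\emph{Main obstacle.} The delicate step is the inductive genus reduction. One must exhibit a non-contractible cycle $C$ that (i) decomposes into only a constant number of root-paths of $T$ and (ii) produces a cut graph that embeds in a surface of strictly smaller Euler genus, all while keeping the recursion balanced enough that $|\mathcal{T}|$ stays linear. Handling crosscaps and the possibility that cutting along $C$ disconnects $\Sigma$ into two pieces whose Euler genera sum to at most $g-1$ requires a careful case analysis to ensure that each topological simplification decreases Euler genus by at least one while adding only two root-paths per bag, so that the uniform bound $2g+3$ is preserved.
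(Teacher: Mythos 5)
The paper does not actually prove \Cref{layeredtwplanar}: it quotes it from Dujmovi\'c, Morin, and Wood \cite{DMW17}, so your proposal can only be measured against the proof in that reference. Your planar base case is, in outline, the standard argument: bags that are unions of three vertical (root-to-vertex) paths of a BFS tree of a triangulation, organized either via the dual spanning tree or, as you do, recursively with the invariant that each region is bounded by at most two vertical paths plus one non-tree edge. That part is fine in spirit (balance is unnecessary; any such recursion already yields $\mathcal{O}(n)$ bags), although the invariant should be stated and checked explicitly rather than asserted.

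The genus induction, however, has a genuine gap, and it sits exactly where you flag the ``main obstacle''. After cutting the surface along the non-contractible cycle $C \subseteq P_u \cup P_v \cup \{uv\}$, distances to the root in the cut graph $G'$ can strictly increase, because shortest paths of $G$ that crossed $C$ are broken when the vertices of $C$ are duplicated. Hence a BFS tree of $G'$ cannot in general ``be taken to agree with $T$ outside the cut'', and the BFS layering of $G'$ is not the restriction of the BFS layering of $G$. The inductive hypothesis then gives bags that are unions of at most $2(g-1)+3$ root-paths of some tree $T'$ of $G'$; these paths meet each layer of the layering of $G'$ at most once, but they may meet a single layer of the BFS layering of $G$ several times. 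So adding $P_u\cup P_v$ to every bag does not yield the bound $2g+3$ with respect to the layering the statement requires, which must be a BFS layering of $G$ itself. The argument in \cite{DMW17} avoids this by never re-running BFS: it keeps the original BFS tree $T$ and layering of $G$, cuts along the union of at most $g$ fundamental cycles of $T$ (via the tree--cotree partition) in a single step to obtain a planar graph, adds the at most $2g$ vertical paths of these cycles to every bag (costing $2g$ per layer), and applies the planar construction to the planarization with a spanning tree chosen so that every vertex copy has its parent in the previous layer of the \emph{inherited} layering (possible because non-cut vertices keep their BFS parent edge and copies of cut vertices lie on copies of vertical paths). Repairing your genus-by-genus induction would require an analogous compatibility argument for the cut graph's tree and layering; as written, the step does not go through. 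A minor further point: computing an embedding of Euler genus at most $g$ is NP-hard when $g$ is part of the input, so one either assumes the embedding is given or accepts a running time of the form $f(g)\cdot\mathsf{poly}(n)$; your ``$\mathsf{poly}(n,g)$ time for any fixed $g$'' phrasing conflates the two.
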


Therefore, combining \Cref{layeredtwplanar} with \Cref{from-bdd-ltw-to-bdd-lta} immediately implies that $g$-map graphs have layered tree-independence number at most $8g + 12$. We now improve on this bound. 

Given a bipartite graph $G$ with bipartition $(B_1,B_2)$, a layering $(V_1,V_2,\ldots)$ is a \textit{bipartite layering} if, for each $i\in \mathbb{N}$, $V_{2i-1} \subseteq B_1$ and $V_{2i} \subseteq B_2$. 
The next result uses ideas from \Cref{from-bdd-ltw-to-bdd-lta} and \cite[Lemma~5.2]{DEW17}. 

\begin{proposition}\label{bipartite_layering}
Let $G$ be a bipartite graph with bipartition $(B_1,B_2)$ and let $G'$ be the graph obtained by the neighborhood cliquification on $(G, B_2)$ followed by deleting $B_2$. Given a tree decomposition $\mathcal{T}$ and a bipartite layering $(V_1, V_2, \ldots)$ of $G$ witnessing layered treewidth $k$, it is possible to compute, in time $\mathcal{O}(|\mathcal{T}|\cdot |V(G)|)$, a tree decomposition $\mathcal{T}'$ with $|\mathcal{T}'| = |\mathcal{T}|$ and a layering of $G'$ witnessing layered tree-independence number $3k$.
\end{proposition}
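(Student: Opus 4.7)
The plan is to adapt the construction in the proof of \Cref{from-bdd-ltw-to-bdd-lta} (taking $r=1$ and $P = B_2$), leveraging the bipartite layering to improve the bound from $4k$ to $3k$.

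First, I would define the layering $(W_1,W_2,\ldots)$ of $G'$ by setting $W_i = V_{2i-1}$. This is a partition of $V(G')=B_1$, because the bipartite-layering hypothesis gives $V_{2i-1}\subseteq B_1$ for all $i$ and $B_1 = \bigcup_i V_{2i-1}$. To check the layering property of $G'$, I observe that any edge $uv\in E(G')$ arises from a common neighbor $p\in B_2$ of $u$ and $v$ in $G$; writing $p\in V_{2k}$, the edges $up,vp\in E(G)$ force $u,v \in V_{2k-1}\cup V_{2k+1}$, so the $W$-indices of $u$ and $v$ differ by at most one.

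Second, I would define the tree decomposition $\mathcal{T}'=(T,\{Y_t\}_{t\in V(T)})$ by
\[
Y_t \;=\; (X_t\cap B_1)\;\cup\;\{\,v\in B_1 \colon N_G(v)\cap X_t\cap B_2\neq \varnothing\,\}.
\]
Properties (T1) and (T2) are immediate: every $v\in B_1$ lies in some $X_t\cap B_1\subseteq Y_t$, and for $uv\in E(G')$ any bag $X_t$ containing a common $B_2$-neighbor $p$ of $u,v$ puts both $u,v$ into $Y_t$. Property (T3) follows exactly as in \Cref{from-bdd-ltw-to-bdd-lta}: for $v\in B_1$, the set $\{t : v\in Y_t\}$ is the union of $T_v$ together with the subtrees $T_p$ for $p\in N_G(v)\cap B_2$, and each $T_p$ meets $T_v$ at any bag containing the edge $vp$, so the union induces a subtree.

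Third, I would bound $\alpha(G'[Y_t\cap W_i])$. Let $A\subseteq Y_t\cap W_i$ be independent in $G'$. Define $\phi\colon A\to X_t$ by $\phi(a)=a$ if $a\in X_t$, and otherwise $\phi(a)=p$ for some arbitrarily chosen $p\in X_t\cap B_2$ with $ap\in E(G)$. If $\phi(a)=\phi(a')=p\in B_2$ for distinct $a,a'\in A$, then $a$ and $a'$ share the common $B_2$-neighbor $p$, hence $aa'\in E(G')$, contradicting the independence of $A$; so $\phi$ is injective. When $\phi(a)=a$ we have $\phi(a)\in V_{2i-1}$; when $\phi(a)=p\in B_2$, the edge $ap\in E(G)$ with $a\in V_{2i-1}$ and $p\notin B_1$ forces $p\in V_{2i-2}\cup V_{2i}$. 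Thus $\phi(A)\subseteq X_t\cap (V_{2i-2}\cup V_{2i-1}\cup V_{2i})$, and the layered treewidth hypothesis yields $|A|=|\phi(A)|\leq 3k$.

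The place where bipartiteness pays off is exactly this last step: in the general (non-bipartite) setting used for \Cref{from-bdd-ltw-to-bdd-lta} one has to merge two consecutive layers in order to get a valid layering of the cliquification, and the image of $\phi$ then spans four layers, yielding the weaker bound $4k$; the bipartite layering allows $W_i$ to consist of a single layer of $G$, cutting the span to three layers. I do not expect any serious obstacle beyond verifying the connectivity condition (T3) and the injectivity of $\phi$; the algorithmic claim follows by performing the construction bag by bag and, for each bag $X_t$, scanning neighborhoods of vertices of $X_t\cap B_2$ in total time $\mathcal{O}(|V(G)|)$.
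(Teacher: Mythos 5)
Your proposal is correct and follows essentially the same route as the paper: the same layering $W_i = V_{2i-1}$, the same bags $Y_t$ (the $r=1$, $P=B_2$ specialization of the cliquification construction), and the same observation that the relevant vertices of $X_t$ fall into the three layers $V_{2i-2}\cup V_{2i-1}\cup V_{2i}$, yielding the bound $3k$. The only difference is cosmetic: you argue via an explicit injection $\phi$ into $X_t$, whereas the paper phrases the same count as a pigeonhole contradiction by invoking the $|A|=|A'|$ claim from its Theorem~3.1.
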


\begin{proof}
Let $\mathcal{T} = (T,\{X_t\}_{t\in V(T)})$ be a tree decomposition of $G$ and let $(V_1,V_2,\ldots)$ be a bipartite layering of the bipartite graph $G$ with bipartition $(B_1,B_2)$ such that, for each bag $X_t$ and layer $V_i$, $|X_t \cap V_i| \leq k$.  
We first construct a layering of $G'$. For each $i \geq 1$, let $W_i = V_{2i-1}$.

\begin{nitem}\label{biplay}
    $(W_1, W_2, \ldots)$ is a layering of $G'$.
\end{nitem}

\begin{claimproof}[Proof of \eqref{biplay}]
Clearly, $(W_1, W_2, \ldots)$ is a partition of $V(G')$. 
Note that $V(G') = B_1$, hence, for any pair of adjacent vertices $u$ and $v$ in $G'$, there exists $p\in B_2$ such that $\{u,v\} \subseteq N_{G}(p)$. 
This implies that $u$ and $p$ belong to layers from $(V_1, V_2, \ldots)$ that are at distance at most $1$, and the same holds for $v$ and $p$. 
Hence, $u$ and $v$ belong to layers from $(V_1, V_2, \ldots)$ that are at distance at most $2$, and so the layers from $(W_1, W_2, \ldots)$ they belong to are at distance at most $1$.  
\end{claimproof}

We now build a tree decomposition $\mathcal{T}' = (T,\{Y_t\}_{t\in V(T)})$ of $G'$ exactly as in the proof of \Cref{from-bdd-ltw-to-bdd-lta} with $r=1$. Observe that, for each $t \in V(T)$, $Y_t$ is the set of all vertices of $G'$ that either belong to $X_t$ or have a neighbor $p\in B_2$ (in $G$) that belongs to $X_t$. 
 
It remains to show that $\alpha(G'[Y_t \cap W_i]) \leq 3k$ for each bag $Y_t$ and layer $W_i$. 
Consider an arbitrary bag $Y_t$ and layer $W_i$.
Suppose for a contradiction that $\alpha(G'[Y_t \cap W_i])\ge 3k+1$ and let $A' = \{a_1', a_2', \ldots\}$ be an independent set in $G'[Y_t \cap W_i]$ such that $|A'|=3k+1$.
Taking $P = B_2$ in the proof of \Cref{from-bdd-ltw-to-bdd-lta}, construct the set $A = \{a_1, a_2, \ldots\} \subseteq X_t$ with $|A| = |A'|$. 

\begin{nitem}\label{biplay_1}
    Each $a_j \in A$ belongs to $V_{2i-2}\cup V_{2i-1} \cup V_{2i}$.
\end{nitem}

\begin{claimproof}[Proof of \eqref{biplay_1}] 
Let $a_j \in A$. If $a_j' \in Y_t^-$, then $a_j = a_j' \in W_i = V_{2i-1}$.
Otherwise, $a_j' \in Y_t^+ \setminus Y_t^-$, and so $a_j$ belongs to $B_2$ and is adjacent to $a_j' \in W_i = V_{2i-1}$. 
This implies that $a_j \in V_{2i-2}\cup V_{2i-1} \cup V_{2i}$.
\end{claimproof}

Since $|A| = |A'|=3k+1$, there exists a layer $V_j$ containing at least $k+1$ vertices of $A$. 
This implies that $|X_t \cap V_j| \geq k+1$, a contradiction.
\end{proof}

We now apply \Cref{bipartite_layering} to the class of $g$-map graphs. In the following result we assume, without loss of generality,\footnote{If instead $G$ is given with an embedding, a witness $H$ of $G$ can be computed in linear time thanks to \cite[Lemma 5.1]{DEW17}.} that every $g$-map graph $G$ is given with a witness $H$. 

\begin{corollary}\label{maplayered} Let $g\ge 0$. Given a $g$-map graph $G$ with $n$ vertices, it is possible to compute, in $\mathsf{poly}(n,g)$ time, a tree decomposition $\mathcal{T}$ with $|\mathcal{T}| =\mathcal{O}(n)$ and a layering $(V_1, V_2, \ldots)$ of $G$ witnessing layered tree-independence number $6g+9$. In particular, every $g$-map graph has layered tree-independence number at most $6g+9$.
\end{corollary}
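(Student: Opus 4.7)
The plan is to combine \Cref{layeredtwplanar}, which handles layered treewidth for graphs of bounded Euler genus, with \Cref{bipartite_layering}, which transfers bipartite layered treewidth to the half-square graph with only a factor-$3$ blow-up. Since the excerpt already recalls that a $g$-map graph $G$ is precisely the half-square $H^2[X]$ for some bipartite graph $H$ with bipartition $(X,Y)$ of Euler genus at most $g$, i.e., $G$ is obtained by the neighborhood cliquification on $(H, Y)$ followed by deleting $Y$, a direct composition of these two results will yield a layered tree-independence number of $3(2g+3) = 6g+9$.

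First I would reduce to the case where $H$ is connected by treating each connected component separately: on each component take the tree decomposition and layering from \Cref{layeredtwplanar}, then glue them into a tree decomposition of $H$ by joining the individual decomposition trees with arbitrary edges (adding empty bags as needed) and taking, for each index $i$, the union over components of the $i$-th layer. This preserves both the layered-treewidth bound of $2g+3$ and the size bound $|\mathcal{T}| = \mathcal{O}(n)$, and runs in $\mathsf{poly}(n,g)$ time.

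Next I would apply \Cref{layeredtwplanar} to each component to obtain a BFS layering $(V_1,V_2,\ldots)$ witnessing layered treewidth $2g+3$, taking care to root the BFS at a vertex of $X$ in each component that contains at least one vertex of $X$. Since $H$ is bipartite with bipartition $(X,Y)$, any BFS layering rooted at an $X$-vertex has distance-even vertices in $X$ and distance-odd vertices in $Y$; after an index shift so that $V_1$ is the root layer, this is exactly a bipartite layering in the sense required by \Cref{bipartite_layering}, with $V_{2i-1}\subseteq X$ and $V_{2i}\subseteq Y$. Components of $H$ consisting entirely of $Y$-vertices (if any) contribute nothing to $G$ and can be dumped into an even-indexed layer without affecting the construction.

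Finally, I would feed the tree decomposition and bipartite layering of $H$ into \Cref{bipartite_layering} with $B_1 = X$ and $B_2 = Y$. This produces, in time $\mathcal{O}(|\mathcal{T}|\cdot|V(H)|)=\mathsf{poly}(n,g)$, a tree decomposition $\mathcal{T}$ with $|\mathcal{T}|=\mathcal{O}(n)$ and a layering of the graph obtained by neighborhood-cliquifying $Y$ and deleting it, which by construction coincides with $G = H^2[X]$, witnessing layered tree-independence number $3(2g+3)=6g+9$. The only non-trivial bookkeeping is to ensure that BFS is started on the correct side of the bipartition in every component so that the resulting layering genuinely qualifies as bipartite; once that is in place, the rest of the proof is a direct pipelining of the two cited results.
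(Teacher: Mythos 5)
Your proposal is correct and follows essentially the same route as the paper's proof: take the bipartite witness $H$ of Euler genus at most $g$, apply \Cref{layeredtwplanar} to get a BFS layering witnessing layered treewidth $2g+3$, note that it is a bipartite layering, and pipeline into \Cref{bipartite_layering} to obtain $3(2g+3)=6g+9$. Your extra bookkeeping (handling disconnected witnesses and rooting the BFS on the $X$-side so the layering is bipartite in the required orientation) is exactly the detail the paper compresses into ``Observe that $(V_1,V_2,\ldots)$ must be a bipartite layering of $H$.''
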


\begin{proof} Let $G$ be a $g$-map graph with $n$ vertices. Let $H$ be the witness of $G$ with bipartition $(X, Y)$, and recall that $H$ has Euler genus at most $g$ and $G$ is obtained from a neighborhood cliquification on $(H,Y)$ followed by deleting $Y$. We apply \Cref{layeredtwplanar} to the graph $H$ in order to compute, in $\mathsf{poly}(n,g)$ time, a tree decomposition $\mathcal{T}$ with $|\mathcal{T}| =\mathcal{O}(n)$ and a BFS layering $(V_1, V_2, \ldots)$ of $H$ witnessing layered treewidth $2g+3$. Observe that $(V_1, V_2, \ldots)$ must be a bipartite layering of $H$. We then apply \Cref{bipartite_layering} in order to compute the desired tree decomposition and layering of $G$.
\end{proof}

\Cref{maplayered} and \Cref{treealphabound} imply the following.

\begin{corollary}\label{treealphamap}
    Every $g$-map graph on $n$ vertices has tree-independence number $\mathcal{O}(\sqrt{gn})$. 
    A tree decomposition witnessing this can be computed in $\mathsf{poly}(n,g)$ time.
\end{corollary}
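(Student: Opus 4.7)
The plan is to obtain Corollary~\ref{treealphamap} by a direct pipeline of the two results already established: first apply Corollary~\ref{maplayered} to the input $g$-map graph $G$, and then feed the resulting tree decomposition and layering into Lemma~\ref{treealphabound}. No additional geometric or combinatorial work is needed; the content of the corollary is a quantitative combination of the two statements.

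Concretely, I would argue as follows. Let $G$ be a $g$-map graph on $n$ vertices. By Corollary~\ref{maplayered}, in $\mathsf{poly}(n,g)$ time we compute a tree decomposition $\mathcal{T}$ with $|\mathcal{T}|=\mathcal{O}(n)$ and a layering $(V_1,V_2,\ldots)$ of $G$ witnessing layered tree-independence number $k \le 6g+9$. Then apply Lemma~\ref{treealphabound} to the pair $(\mathcal{T},(V_1,V_2,\ldots))$: this runs in $\mathsf{poly}(n,|\mathcal{T}|) = \mathsf{poly}(n)$ time and produces a tree decomposition of $G$ of independence number at most $2\sqrt{kn} \le 2\sqrt{(6g+9)n}$, which is $\mathcal{O}(\sqrt{gn})$. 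Hence $\tin(G) = \mathcal{O}(\sqrt{gn})$, and the overall running time is $\mathsf{poly}(n,g)$, as required.

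There is no substantive obstacle here: both ingredients have been proved, the constants match cleanly ($6g+9 = \mathcal{O}(g)$ so $\sqrt{(6g+9)n} = \mathcal{O}(\sqrt{gn})$), and the running-time bounds compose without issue since $|\mathcal{T}|$ is linear in $n$. The only thing to be careful about is to state explicitly that the layering produced by Corollary~\ref{maplayered} is exactly the input format required by Lemma~\ref{treealphabound} (a tree decomposition together with a layering witnessing the layered tree-independence number), which it is by construction.
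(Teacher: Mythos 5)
Your proposal is correct and matches the paper's argument exactly: the paper also derives this corollary by combining \Cref{maplayered} with \Cref{treealphabound}, yielding independence number at most $2\sqrt{(6g+9)n}=\mathcal{O}(\sqrt{gn})$ in $\mathsf{poly}(n,g)$ time.
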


\begin{remark} Note that \Cref{maplayered} combined with \Cref{ramsey} give an upper bound of $R(6g+10,d)-1$ for the layered treewidth of $(g, d)$-map graphs. Dujmovi\'c et al.~\cite{DEW17} provided an improved upper bound of $(2g+3)(2d+1)$ for this subclass of $g$-map graphs. 
\end{remark}

We now consider contact string graphs. A \textit{simple curve} (or \textit{string}) in $\mathbb{R}^2$ is a subset of $\mathbb{R}^2$ homeomorphic to $[0, 1]$. 
A finite set $\mathcal{R}$ of simple curves in $\mathbb{R}^2$ is a \textit{curve contact representation} of a graph $G$ if the interiors of the curves are pairwise disjoint and $G$ is the intersection graph of $\mathcal{R}$. A curve contact representation $\mathcal{R}$ is a \textit{segment contact representation} if each curve of $\mathcal{R}$ is a line segment.
A graph $G$ is a \textit{contact string graph} (\textit{contact segment graph}) if
it admits a curve contact representation (segment contact representation). For any curve $P$ of a curve contact representation $\mathcal{R}$, we denote by $\partial(P)$ the set consisting of its two endpoints and by $\mathring{P}$ its \textit{interior}, i.e., the set $P\setminus\partial(P)$. A point in $\mathbb{R}^2$ is a \textit{contact point} of a contact representation $\mathcal{R}$ if it is contained in at least two curves of $\mathcal{R}$. Observe that a curve contact representation of a contact string graph on $n$ vertices has at most $2n$ contact points.

Given that $0$-map graphs coincide with contact graphs of disk homeomorphs in $\mathbb{R}^2$ and have bounded layered tree-independence number (\Cref{maplayered}), the following question is natural: 
Do contact string graphs have bounded layered tree-independence number?
We answer this question in the negative by showing that contact segment graphs have unbounded local tree-independence number, and, hence, unbounded layered tree-independence number.
Recall that, given a width parameter $p$, a graph class $\mathcal{G}$ has \textit{bounded local $p$} if there is a function $f\colon \mathbb{N} \rightarrow \mathbb{N}$ such that for every integer $r \in \mathbb{N}$, graph $G \in \mathcal{G}$, and vertex $v \in V(G)$, the subgraph $G[N^{r}[v]]$ has value of $p$ at most $f(r)$. In \cite{GMY24}, it is shown that if every graph in a class $\mathcal{G}$ has layered tree-independence number at most $\ell$, then $\mathcal{G}$ has bounded local tree-independence number with $f(r) = \ell(2r + 1)$.

\begin{figure}[h!]
\centering
\includegraphics[keepaspectratio,width=0.5 \textwidth]{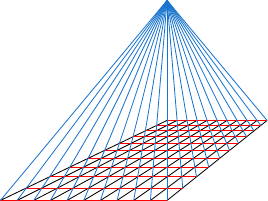}
\quad\quad\quad\quad\includegraphics[keepaspectratio,width=0.38\textwidth]{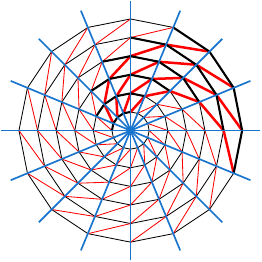}
\caption{Examples of line segments in $\mathbb{R}^2$ yielding contact segment graphs with radius $2$ and unbounded tree-independence number.
Examples on the right extend the ones on the left, in the sense that sufficiently large examples of the right type contain any fixed example of the left type (as indicated by the bold part of the figure on the right).}
\label{contactstringlocal}
\end{figure}

\begin{proposition}\label{contactlocal}
The class of contact segment graphs has unbounded local tree-independence number.
\end{proposition}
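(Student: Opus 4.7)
The plan is to exhibit, for every positive integer $N$, a contact segment graph $G_N$ having a vertex $v_N$ with $N^2_{G_N}[v_N] = V(G_N)$ and $\operatorname{tin}(G_N) \ge N$. Since $G_N[N^2_{G_N}[v_N]] = G_N$, this immediately witnesses unbounded local tree-independence number (at radius $2$).

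The construction follows the spiderweb pattern illustrated on the right of \Cref{contactstringlocal}. Fix large integers $k,\ell$ with $k,\ell\ge N$. I place: (i)~a short ``central'' segment $s_{v}$ with one endpoint at a distinguished point $o\in\mathbb{R}^2$, representing $v_N$; (ii)~$k$ ``radial'' segments $r_1,\ldots,r_k$, each having $o$ as an endpoint, emanating in pairwise distinct directions with suitably chosen lengths; (iii)~for every pair $(i,j)$ with $i\in\mathbb{Z}/k\mathbb{Z}$ and $j\in[\ell]$, a ``chord'' segment $c_{i,j}$ whose endpoints lie on the interiors of $r_i$ and $r_{i+1}$ at a common radial distance $\rho_j$ from $o$, where $\rho_1<\rho_2<\cdots<\rho_\ell$. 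A careful choice of the angles $\theta_i$ and radii $\rho_j$ ensures interiors remain pairwise disjoint, so this is a valid contact segment representation. In the induced intersection graph, $v_N\sim r_i$ for all $i$ (shared point $o$), the $r_i$'s pairwise share $o$, each chord $c_{i,j}$ touches $r_i$ and $r_{i+1}$, and consecutive chords $c_{i,j},c_{i+1,j}$ at the same radial distance share the endpoint on $r_{i+1}$. Hence every $r_i$ is at distance $1$ from $v_N$ and every $c_{i,j}$ at distance $2$, so $N_{G_N}^2[v_N]=V(G_N)$.

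The crux is establishing that $\operatorname{tin}(G_N)\ge N$. The strategy is a balanced-separator argument. I plan to argue that for any tree decomposition $\mathcal{T}$ of $G_N$, there is a ``centroid'' bag $X_t$ that is a balanced separator of $G_N$ in the usual sense; and then show that, for sufficiently large $k$ and $\ell$, any balanced separator of $G_N$ must induce a subgraph of independence number at least $N$. The underlying geometric reason is that although chords within the same sector are pairwise non-adjacent, connecting chords across different rings requires paths through the central clique formed by the radials or through consecutive chords at the same level; a balanced cut of the spiderweb must therefore contain many chord segments, and by choosing representatives at staggered levels (so as to avoid adjacencies coming from shared radial endpoints), these can be made pairwise non-adjacent.

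The main obstacle will be the second step: formalizing the lower bound on $\operatorname{tin}(G_N)$. One delicate issue is that the radials form a clique of size $k$, so in any tree decomposition some bag contains all of them; while this bag itself has small induced independence number, the necessity of covering the chord--chord adjacencies and satisfying condition~(T3) for each chord forces auxiliary bags (or the central bag itself) to contain many independent chords. I expect the argument to proceed by analysing, for an arbitrary tree decomposition, the subtrees $T_{c_{i,j}}$ of bags containing each chord and showing, via a pigeonhole or fractional counting argument applied to the $k\ell$ chords, that some bag contains an independent family of size $\Omega(\min(k,\ell))$. Setting $k=\ell=N$ then yields the claimed bound.
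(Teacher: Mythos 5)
Your construction does not have unbounded tree-independence number, so the key claim $\tin(G_N)\ge N$ fails. In your spiderweb, the only adjacencies among the chords are the ring edges $c_{i,j}c_{i+1,j}$ at a fixed level $j$; chords at different levels are pairwise non-adjacent and are joined only through the radial segments. Hence $Q=\{v_N\}\cup\{r_1,\dots,r_k\}$ is a \emph{clique} whose removal leaves a disjoint union of $\ell$ cycles, and one gets a tree decomposition of $G_N$ of constant independence number: take a star of paths, one path per level $j$, with bags $Q\cup\{c_{1,j},c_{i,j},c_{i+1,j}\}$ for $i=1,\dots,k-1$, glued at a central bag $Q$. Every bag is a clique plus at most three chords, so its independence number is at most $4$, giving $\tin(G_N)\le 4$ for all $k,\ell$. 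For the same reason your separator heuristic is wrong: $Q$ itself is a balanced \emph{clique} separator (split the levels into two halves), so it is false that every balanced separator must contain many pairwise non-adjacent chords. Even apart from this, the lower-bound step in your write-up is only announced (``I plan to argue\dots'', ``I expect\dots''), so nothing is actually proved.

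The fix is to add contacts between consecutive levels, which is exactly what the paper's construction (\Cref{contactstringlocal}) does: besides the long segments through a common point (the clique $C$, giving radius $2$), it uses short segments in both the ``ring'' and the ``radial'' directions meeting at common contact points, so that $G-C$ is isomorphic to the line graph of an $n\times n$ grid. The lower bound then needs no bespoke separator analysis: tree-independence number is monotone under induced subgraphs, grids have unbounded treewidth, and line graphs of grids have unbounded tree-independence number (the paper cites this), so $\tin(G)\ge\tin(G-C)\to\infty$ while the radius stays $2$. If you want to keep your approach, you must both enrich the construction with these cross-level contacts and replace the sketched pigeonhole/centroid argument by an actual proof, e.g.\ by reducing to such a known lower bound.
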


\begin{proof}
Consider the family of line segments in $\mathbb{R}^2$ depicted in the left part of \Cref{contactstringlocal}. 
Each segment is either blue, red, or black. 
The blue segments are the only segments whose interiors may contain endpoints of other segments (in particular, the red and black segments are significantly shorter than the blue ones).
Let $G$ be the corresponding contact segment graph.
Since the blue segments all share a point, they correspond to a clique $C$ in $G$.
Moreover, since every red or black segment shares a point with a blue segment, every vertex in $C$ is at distance at most $2$ from every other vertex.
Hence, $G$ has radius $2$.

Recall now that the line graph of a graph $H$ is the graph with vertex set $E(H)$, in which two distinct vertices are adjacent if and only if the corresponding edges of $H$ share an endpoint. Observe that the subgraph $G-C$ induced by vertices corresponding to the red and black segments is isomorphic to the line graph of an $n\times n$ grid (the graph in the figure has $n = 11$). It is well known that grids have unbounded treewidth and so line graphs of grids have unbounded tree-independence number (see, e.g., \cite[Theorem~3]{BMPY25}). Using an obvious generalization of the family of line segments in $\mathbb{R}^2$ depicted in the left part of \Cref{contactstringlocal}, we obtain a family of contact segment graphs with radius $2$ and with unbounded tree-independence number, hence with unbounded local tree-independence number.
\end{proof}

\Cref{contactlocal} and \Cref{maplayered} imply that there exist contact string graphs which are not $0$-map graphs. Our next result implies that these two graph classes are in fact incomparable. 

\begin{figure}[h!]
\centering\includegraphics[]{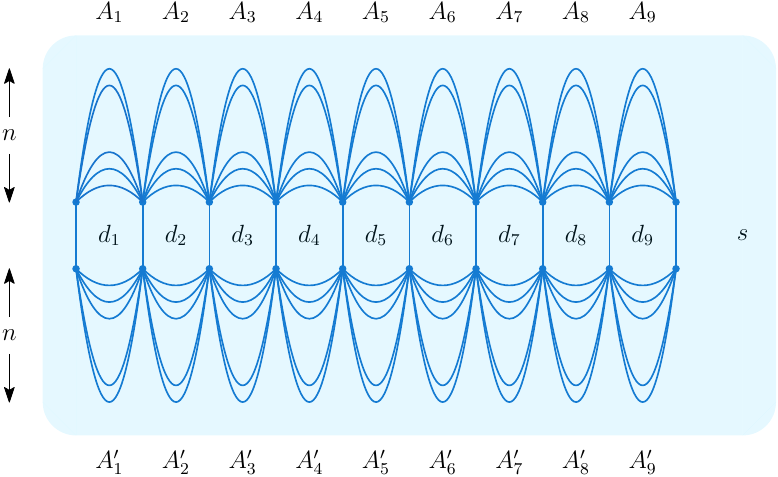}
\caption{A representation of a $0$-map graph $G$ which is not contact string. 
Each $A_i$ and $A'_i$ consists of $n$ nations, where $n$ is sufficiently large (see \Cref{mapnotcontact}). 
The graph $G$ has $18n + 9 + 1$ vertices, corresponding to the faces of the plane graph $G_0$ drawn in blue, which are all nations.}
\label{mapgraph}
\end{figure}

\begin{proposition}\label{mapnotcontact} There exist $0$-map graphs which are not contact string graphs.
\end{proposition}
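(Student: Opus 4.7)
The plan is to exhibit a concrete $0$-map graph $G$ as in Figure~\ref{mapgraph} and to show that, once $n$ is chosen large enough, $G$ admits no curve contact representation in the plane. To begin with, I would justify that $G$ is a $0$-map graph by verifying that the plane graph $G_0$ drawn in blue has exactly the prescribed set of faces, each labelled a nation, and that its face-meeting graph is isomorphic to $G$. This is direct from the construction and produces the ``one central vertex, nine anchor vertices, and $18$ groups of $n$ pairwise non-adjacent nations'' structure recorded in the figure caption.

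The non-representability half of the argument would proceed by contradiction. Assume that $\mathcal{R}=\{P_v\}_{v\in V(G)}$ is a curve contact representation of $G$. The topological fact I intend to exploit is that, because the interiors of the curves in $\mathcal{R}$ are pairwise disjoint, the intersection $P_u\cap P_v$ of any two distinct curves is contained in $\partial(P_u)\cup\partial(P_v)$. In particular, if a vertex $v$ has an independent set $I$ of neighbours with $|I|>2$, then all but at most two of the corresponding curves $P_u$ must attach to $P_v$ through one of \emph{their own} endpoints. Hence each large independent set of neighbours of $v$ produces a collection of endpoints sitting on $P_v$, which inherits a natural linear (and, combined with the two sides of $P_v$, cyclic) order.

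The heart of the proof is a Ramsey-type pigeonhole step applied to each of the $18$ groups $A_i$ and $A'_i$. Since each group is an independent set of size $n$ sharing a common neighbour (an anchor vertex, or a pair of them), for $n$ sufficiently large I can extract from each group a large sub-family whose attachment endpoints occur along the common neighbour's curve in any chosen pattern. Reading off these patterns determines, for every anchor vertex, a local rotation of the curves incident to it in the plane. The concluding step is to combine these nine local rotations with the adjacency constraints coming from the nine anchors and the unique central vertex. The mirrored configuration of $A_i$ and $A'_i$ and the planarity of the representation force two curves to share interior points; equivalently, the combined constraints would yield a planar drawing of a non-planar graph (a subdivided $K_5$ or $K_{3,3}$ obtained by contracting each group to its anchor), which is the desired contradiction.

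I expect the main obstacle to be the topological bookkeeping of the cyclic orders: for each anchor vertex one must track both the rotation inherited from $G_0$ and the one forced by $\mathcal{R}$, and show that the mirror-symmetric pair $A_i, A'_i$ leaves no choice compatible with planarity. The role of the number ``nine'' in the construction is precisely to create enough anchor-to-anchor interactions routed through the central vertex to rule out every compatible cyclic arrangement.
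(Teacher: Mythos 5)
Your only concrete observation -- that each endpoint of $P_v$ lies in the interior of at most one other curve, so all but at most two neighbours in an independent set must attach to $P_v$ via their own endpoints -- is correct, but the rest of the argument has a genuine gap at its decisive step. You propose to conclude by showing that ``the combined constraints would yield a planar drawing of a non-planar graph (a subdivided $K_5$ or $K_{3,3}$ obtained by contracting each group to its anchor).'' This is never derived from anything, and as a strategy it rests on a false premise: non-planarity of a contracted/minor structure is not, by itself, an obstruction to being a contact string graph. Every complete graph $K_n$ is a contact segment graph (take $n$ segments sharing a common endpoint), so $K_5$- or $K_{3,3}$-minors are harmless. A topological contradiction of the kind you envision only arises if one can force the connecting curves to be pairwise disjoint and to meet nothing except their two designated branch curves; with your assumed structure of large \emph{independent} groups attached to anchors there is no mechanism forcing this (large independent attachments are trivially realizable -- stars are contact segment graphs), and the ``Ramsey-type pigeonhole'' step that is supposed to control the attachment pattern is not formulated, let alone proved. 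Note also that the simplest graph your plan would naturally target, a subdivided $K_5$, is triangle-free and non-planar and hence not a $0$-map graph at all, so the construction cannot be as simple as ``anchors plus independent groups''; your reading of \Cref{mapgraph} as ``18 groups of $n$ pairwise non-adjacent nations'' is in fact the opposite of the actual construction, in which each $A_i$ and $A'_i$ is a large \emph{clique} and consecutive groups are complete to each other.

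The paper's proof works through an entirely different, quantitative mechanism that your proposal does not touch. The key lemma is a counting bound: in any curve contact representation of $K_n$ (for $n$ large), some point of the plane is an endpoint of more than $n/3$ of the strings, because edges realized through an endpoint of one string lying in another string's interior are too few (each string contributes at most a bounded amount this way), so most edges of the clique must be realized by shared endpoints. Chaining this lemma along the sequence of overlapping cliques $A_1,\dots,A_9$ (and symmetrically $A'_1,\dots,A'_9$) produces distinct points $b_1,\dots,b_9$ and $c_1,\dots,c_9$ together with large subfamilies $B_{i+1}\subseteq A_{i+1}$, $C_{i+1}\subseteq A'_{i+1}$ whose strings have \emph{both} endpoints at consecutive pinned points. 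The contradiction is then an incidence count: the outer-nation string $s$ and the strings $d_3,\dots,d_8$ must together meet these pinned points at least $32$ times, but they have at most $14$ endpoints in total, so at least $18$ of these incidences occur at interior points, and the pigeonhole principle forces some pinned point to lie in the interiors of two strings, contradicting disjointness of interiors. To repair your proposal you would need both to identify the correct clique-based structure of the example and to supply a forcing argument of this quantitative kind; the planarity-of-a-minor route cannot substitute for it.
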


\begin{proof}
We show that the $0$-map graph in \Cref{mapgraph} is not a contact string graph.
We will make use of the fact that, in every curve contact representation of a large clique, many strings have a common endpoint. 
In order to formalize this, we introduce some notation from \cite{DGMR18}. 
Let $\mathcal{R}$ be a curve contact representation of a contact string graph $G=(V,E)$ where, for each $u \in V(G)$, the curve $P_u \in \mathcal{R}$ corresponds to the vertex $u$ and has endpoints $q_u^1$ and $q_u^2$. 
For each $u \in V(G)$, let $w_u = w_u^1 + w_u^2$, where for $i \in \{1,2\}$, \[w_u^i = |\{P \in \mathcal{R}\colon q_u^i \in \mathring{P}\}| + \frac{1}{2}|\{P \in \mathcal{R}\colon P \neq P_u \ \mbox{and} \ q_u^i \in \partial(P)\}|.\] 
Note that, since the interiors of the curves are pairwise disjoint, $|\{P \in \mathcal{R}\colon q_u^i \in \mathring{P}\}|\le 1$ for each $u \in V(G)$ and $i\in \{1,2\}$.
Since for every edge $uv \in E$ either an endpoint of $P_u$ belongs to the interior of $P_v$, or $P_u$ and $P_v$ have a common endpoint, it is easy to see that $|E| \leq \sum_{u \in V}w_u$. 

\begin{nitem}\label{contactpoint}
    Let $c < 1/2$ and let $n > 3/(1-2c)$.
    Then, for every curve contact representation $\mathcal{R}$ of $K_n$, there exists a point $p \in \mathbb{R}^2$ that is an endpoint of more than $cn$ strings of $\mathcal{R}$.
\end{nitem}

\begin{claimproof}[Proof of \eqref{contactpoint}] 
    Suppose, to the contrary, that every point $p \in \mathbb{R}^2$ is an endpoint of at most $cn$ strings of $\mathcal{R}$. 
    This implies that, for each $u \in V(K_n)$ and $i\in \{1,2\}$, \[w_u^i \leq 1 + \frac{1}{2}(cn-1) = \frac{cn}{2} + \frac{1}{2}\] and so \[|E(K_n)| \leq \sum_{u \in V(K_n)}w_u \leq 2n\bigg(\frac{cn}{2} + \frac{1}{2}\bigg) = n(cn + 1) < \frac{n(n-1)}{2},\] a contradiction. 
\end{claimproof}

Consider the $0$-map graph $G$ in \Cref{mapgraph}, where $n$ is a sufficiently large power of $24$, say $n=24^{10}$, and all the faces are nations. 
The $0$-map graph $G$ has $18n + 9 + 1$ vertices. 
Suppose, to the contrary, that $G$ has a curve contact representation. 
By taking $c = 1/3$ in \eqref{contactpoint}, there exist a point $p$ that is an endpoint of (more than) $n/3$ strings of $A_1$ and a point $p_2$ that is an endpoint of (more than) $n/3$ strings of $A_2$. 
Let $A_1(p_1) \subseteq A_1$ and $A_2(p_2) \subseteq A_2$ be such sets of strings of size $n/3$. Note that, since the sets $A_1$ and $A_2$ are disjoint, so are $A_1(p_1)$ and $A_2(p_2)$. 

We claim that there exists a point that is an endpoint of $n/24$ strings of $A_1(p_1)$ and $n/24$ strings of $A_2(p_2)$. 
If $p_1 = p_2$, the claim is immediate. 
Hence, suppose instead that $p_1 \neq p_2$ and that none of $p_1$ and $p_2$ satisfies the claimed property. 
Consider the curve contact representation obtained by restricting the strings to $A_1(p_1) \cup A_2(p_2)$ and let $G'$ be the corresponding complete graph on $2n/3$ vertices.
We argue that there must be a point $q_1$, distinct from $p_1$ and $p_2$, that is an endpoint of at least $n/6$ strings of $A_1(p_1) \cup A_2(p_2)$. 
Indeed, if this is not the case, then for each $u \in A_1(p_1) \cup A_2(p_2)$, 
\[w_u^1 \leq 1 + \frac{1}{2}\bigg(\bigg(\frac{n}{3} -1\bigg)+ \bigg(\frac{n}{24}-1\bigg)\bigg) \ \ \ \mbox{and} \ \ \  w_u^2\le 1 + \frac{1}{2}\bigg(\frac{n}{6}-2\bigg),\] which implies \[|E(G')| \leq \sum_{u\in A_1(p_1) \cup A_2(p_2)}w_u \leq \frac{2n}{3}\cdot\frac{1}{2}\bigg(\frac{n}{3}+\frac{n}{24}+\frac{n}{6}\bigg) < \frac{1}{2}\cdot\frac{2n}{3}\bigg(\frac{2n}{3}-1\bigg),\] a contradiction.   
Therefore, let $q_1$ be a point distinct from $p_1$ and $p_2$ that is an endpoint of at least $n/6$ strings of $A_1(p_1) \cup A_2(p_2)$. 
If $q_1$ is an endpoint of $n/12$ strings of $A_1(p_1)$ and $n/12$ strings of $A_2(p_2)$, we are done. 
Otherwise, without loss of generality, $q_1$ is an endpoint of more than $n/12$ strings of $A_1(p_1)$. 
But since $n/12 > 3$ and $A_1(p_1) \cup A_2(p_2)$ is a clique, each of the $n/3$ strings of $A_2(p_2)$ must contain one of $p_1$ and $q_1$. 
Moreover, at most two of the strings of $A_2(p_2)$ contain one of $p_1$ and $q_1$ as an interior point, and so at least $n/3 - 2 > n/12$ strings of $A_2(p_2)$ have one of $p_1$ and $q_1$ as an endpoint.
By the pigeonhole principle, this implies that one of $p_1$ and $q_1$ is an endpoint of $n/24$ strings of $A_1(p_1)$ and $n/24$ strings of $A_2(p_2)$, as claimed. 

By the paragraph above, let $b_1$ be a point that is an endpoint of $n/24$ strings of $A_1(p_1)$ and $n/24$ strings of $A_2(p_2)$. 
Consider now these $n/24$ strings of $A_2(p_2)$ and any set of $n/24$ strings of $A_3$ with a common endpoint (whose existence is guaranteed by \eqref{contactpoint}). 
Applying the argument from the previous paragraph, we obtain another point $b_2$ which is the endpoint of $n/(24^2)$ strings of $A_2(p_2)$ and $n/(24^2)$ strings of $A_3$. 
Note that $b_2$ is distinct from $b_1$, as $A_1$ is anticomplete to $A_3$. 
Recursively, we obtain a sequence of distinct points $b_1,\ldots,b_9$ such that, for each $1 \leq i \leq 8$, at least $n/(24^i)$ strings of $A_{i+1}$ have $b_i$ as an endpoint and at least $n/(24^{i+1})$ of these strings have $b_{i+1}$ as an endpoint. 
Hence, for each $1\leq i\leq 8$, there exists a subset $B_{i+1}$ of at least $n/(24^{9})$ strings of $A_{i+1}$ whose endpoints are $b_i$ and $b_{i+1}$. We proceed similarly for strings in $A'_1, \ldots, A'_9$. 
For each $1\leq i \leq 8$, we find a subset $C_{i+1}$ of at least $n/(24^{9})$ strings of $A'_{i+1}$ whose endpoints are $c_i$ and $c_{i+1}$, for some distinct points $c_1,\dots,c_9$. 
Note that $c_i \neq b_j$, for any pair $i, j\in \{1,\ldots,9\}$.

For any point $q$ and string $\ell$, define now the function $f(q,\ell)$ as follows: $f(q,\ell) = 1$ if $\ell$ contains $q$ (either as endpoint or interior point), and $f(q,\ell) = 0$ otherwise. Consider the string $s$ corresponding to the outer nation and observe that $s$ must contain a point of each string of $B_i$, for every $2 \leq i \leq 9$. It follows that $f(b_i,s)+ f(b_{i+1},s) \geq 1$, for every $1 \leq i \leq 8$, from which $\sum_{i=2}^9 f(b_i,s) \geq 4$. Similarly, $\sum_{i=2}^9 f(c_i,s) \geq 4$. 

Consider now the string $d_i$, for any $3\leq i \leq 8$. It must contain a point of each string of $B_{i-1}$ and $B_{i+1}$. But the endpoints of each string of $B_{i-1}$ are $b_{i-2}$ and $b_{i-1}$, and the endpoints of each string of $B_{i+1}$ are $b_{i}$ and $b_{i+1}$. It follows that $f(b_{i-2},d_i)+ f(b_{i-1},d_i)\geq 1$ and $f(b_{i},d_i)+ f(b_{i+1},d_i)\geq 1$, from which $\sum_{j=2}^9 f(b_j,d_i) \geq 2$.  Therefore, $\sum_{i=3}^8 \sum_{j=2}^9f(b_j,d_i) \geq 6\cdot 2 =12$. Similarly, $\sum_{i=3}^8\sum_{j=2}^9 f(c_j,d_i) \geq 6\cdot 2 =12$. 

Combining the bounds from the previous two paragraphs, we obtain that
\begin{align*}
& \sum_{i=2}^9 \sum_{\ell\in \{s,d_3,\ldots,d_8\}}\big(f(b_i,\ell)+f(c_i,\ell)\big) \\ 
=& \sum_{i=2}^9 f(b_i,s)+ \sum_{i=2}^9 f(c_i,s) + 
   \sum_{j=2}^9\sum_{i=3}^8 f(b_j,d_i) + \sum_{j=2}^9\sum_{i=3}^8 f(c_j,d_i) \\
\geq& \ 4+4+12+12 = 32. 
\end{align*}
Now, the strings in $\{s,d_3,\ldots,d_8\}$ give rise to at most $14$ distinct endpoints. Hence, among all instances with $f(b_i,\ell) = 1$ or $f(c_i,\ell) = 1$, at most $14$ are such that one of $b_i$ and $c_i$ is an endpoint of $\ell$, and so there are at least $32-14=18$ instances for which one of $b_i$ and $c_i$ is an interior point of $\ell$. But then, by the pigeonhole principle, one of the $16$ distinct points $b_2,\ldots,b_9, c_2,\ldots,c_9$ must be an interior point of at least two strings, a contradiction.
\end{proof}

We conclude this section by observing that a proper subclass of contact string graphs known as one-sided contact string graphs \cite{Hli95,Hli98,H98} is in fact contained in the class of $0$-map graphs. 

A curve contact representation $\mathcal{R}$ is \textit{one-sided} if each of its contact points $o$ is one-sided, i.e., either all curves containing $o$ end in $o$, or $o$ lies in the interior of a curve and all other curves containing $o$ lie on the same side of that curve. This property of contact representations is transferred to contact graphs, and we refer to contact graphs as one-sided in the obvious sense. As observed by  Hlinen{\'{y}}~\cite{Hli95}, not every contact string graph has a one-sided curve contact representation.  

\begin{lemma}\label{contactsegmenttoplanar} Every one-sided contact string graph is a $0$-map graph.  In particular, every one-sided contact string graph has layered tree-independence number at most $9$. 
\end{lemma}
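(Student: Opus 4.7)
The plan is to invoke the half-square characterization of $0$-map graphs recalled above \Cref{layeredtwplanar}: a graph is a $0$-map graph if and only if it is $H^2[X]$ for some planar bipartite graph $H$ with bipartition $(X,Y)$. Given a one-sided curve contact representation $\mathcal{R}$ of a graph $G$, I would take $X=V(G)$ (identified with the strings in $\mathcal{R}$) and $Y$ to be the set of contact points of $\mathcal{R}$, and form the bipartite graph $H$ on $X\cup Y$ whose edges are exactly the incidences between strings and contact points. Two vertices of $X$ are then adjacent in $H^2[X]$ precisely when the corresponding strings share a contact point, so $G = H^2[X]$. It thus suffices to show that $H$ is planar, since \Cref{maplayered} with $g=0$ then immediately yields layered tree-independence number at most $9$.

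To embed $H$ in the plane I would use the representation $\mathcal{R}$ itself as a drawing template. Place each $y\in Y$ at its contact point, and for each string $x$ pick an interior point $q_x$ of $x$ that is not a contact point. The naive idea of drawing the edge $xy$ as the subarc of $x$ from $q_x$ to $y$ fails when $x$ has several contact points, since different such subarcs would overlap. Instead, I would thicken each string $x$ into a sufficiently narrow strip $R_x$ around $x$ and draw the edges incident to $q_x$ as parallel, pairwise disjoint curves running inside $R_x$ from $q_x$ to the various contact points on $x$. Provided the thickenings are chosen thin enough, strips of non-contacting strings are disjoint, and the task reduces to routing the parallel curves correctly in a small neighborhood of each contact point.

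This is where one-sidedness is essential. At a contact point $y$ where every incident string ends at $y$, the incident strips approach $y$ from pairwise distinct tangent directions, and the parallel curves in each strip can simply be terminated at $y$ in the obvious cyclic order. At a T-type contact point $y$, which lies in the interior of a unique string $c$ with all other incident strings $s_1,\dots,s_k$ ending at $y$ on one side of $c$, I would route the parallel curves of $R_c$ passing through $y$ along the side of $c$ \emph{opposite} to the $s_i$, and let the parallel curves inside each $R_{s_i}$ terminate at $y$ on their natural side. No two such curves cross and every edge of $H$ incident to $y$ is realized. Carrying out this construction consistently over all contact points yields a plane drawing of $H$, proving $H$ planar and hence $G$ a $0$-map graph; the bound of $9$ then follows from \Cref{maplayered}.

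The main obstacle will be the clean verification that the local routings at distinct contact points can be glued into a single globally planar drawing. The subtle point is that one string $x$ can contain several contact points of mixed types, and the parallel curves inside $R_x$ must enter and exit each neighborhood on the correct side; one-sidedness is exactly the property that supplies a consistent "safe side" of $c$ at every T-contact along any given string, which is what makes the global gluing work. The argument also recovers, implicitly, the known fact that non-one-sided contact string graphs need not be $0$-map graphs, since without this hypothesis the two required safe sides at a contact point may be incompatible.
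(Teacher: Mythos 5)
Your proposal follows essentially the same route as the paper: both take $X$ to be the strings and $Y$ the contact points, form the incidence graph $H$, observe that $G=H^2[X]$, and conclude via the half-square characterization of $0$-map graphs together with \Cref{maplayered}. The only difference is that the paper asserts planarity of $H$ as well known (with a figure), whereas you spell out the strip-thickening and one-sided routing argument that justifies it.
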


\begin{proof} Let $G$ be a one-sided contact string graph with curve contact representation $\mathcal{R}$. It is enough to show that there exists a planar bipartite graph $H$ with bipartition $(X, Y)$ such that $G$ is the half-square graph $H^2[X]$. We build such a graph $H$ as follows. The set $X$ corresponds to the curves in $\mathcal{R}$, the set $Y$ corresponds to the contact points of $\mathcal{R}$, and there is an edge between $x \in X$ and $y \in Y$ if and only if the curve corresponding to $x$ contains the point corresponding to $y$ (this is known as the \textit{incidence graph} of $\mathcal{R}$ \cite{dFOdM07,Hli98}). It is well known that $H$ is planar and it is easy to see that $G$ is indeed the half-square graph $H^2[X]$ (see \Cref{incidencegraph}).
Hence, $G$ is a $0$-map graph and the claimed bound on the layered tree-independence number follows from \Cref{maplayered}.
\end{proof}

\begin{figure}[h]
\centering
\includegraphics[scale=0.8]{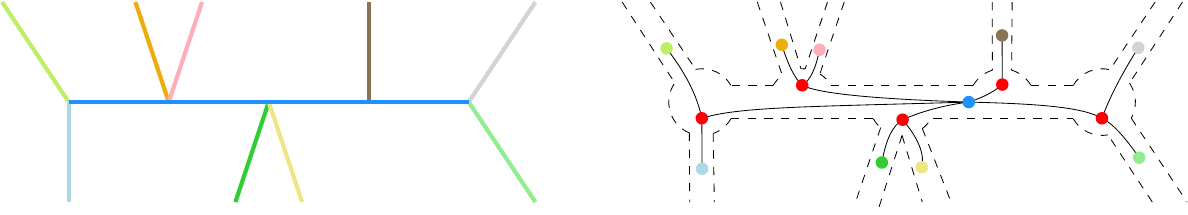}
\caption{Construction of a planar bipartite witness $H$ of a one-sided contact string graph $G$ in the proof of \Cref{contactsegmenttoplanar}. Vertices in red correspond to contact points.}
\label{incidencegraph}
\end{figure}

\subsection{Hyperbolic uniform disk graphs}\label{sec:hyperbolic}

We recall some useful facts about the hyperbolic plane and refer the reader to \cite{RR} for a thorough discussion. We denote the hyperbolic distance by $d_{\mathbb{H}^2}\colon \mathbb{H}^2 \times \mathbb{H}^2 \rightarrow \mathbb{R}$. We consider the \textit{polar-coordinate model} of $\mathbb{H}^2$ defined as follows. We fix a designated point $o \in \mathbb{H}^2$, called \textit{pole}, together with a reference ray starting at $o$, called \textit{polar axis}. Any point $p \in \mathbb{H}^2$ is identified by the pair $(b(p),\theta(p))$, where $b(p)$ denotes the hyperbolic distance between $p$ and $o$ (i.e., $b(p) = d_{\mathbb{H}^2}(p,o)$), and $\theta(p)$ denotes the angle from the polar axis to the ray from $o$ through $p$, where $\theta(p) \in [0, 2\pi)$. 
The pair $(b,\theta)$ refers to the point $p \in \mathbb{H}^2$ with $b(p) = b$ and $\theta(p) = \theta$. 

Since it is $\exists\mathbb{R}$-complete to decide if a given graph belongs to HUDG \cite{BBDJ23}, in the following results we assume that every hyperbolic uniform disk graph is given together with a geometric realization consisting of the polar coordinates of the disk centers. Moreover, we assume that the following operation can be done in constant time: For a disk $O$, compute $\inf_{x\in O}\theta(x)$ and $\sup_{x\in O}\theta(x)$.

\begin{theorem}\label{layeredhyperbolic}
Let $G$ be a hyperbolic uniform disk graph with radius $r$ on $n$ vertices. It is possible to compute, in $\mathcal{O}(n\log n)$ time, a tree decomposition $\mathcal{T}$ with $|\mathcal{T}| \leq 2n$ and a layering $(V_1, V_2, \ldots)$ of $G$ witnessing layered tree-independence number $6\lceil\frac{r}{\tanh r} \rceil$. 
In particular, $G$ has layered tree-independence number at most $6\lceil\frac{r}{\tanh r} \rceil$. 
\end{theorem}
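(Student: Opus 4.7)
The plan is to adapt, to the polar-coordinate model of $\mathbb{H}^2$, the angular sweep-line construction that yields $\mathcal{O}(1)$ layered tree-independence number for Euclidean unit disk graphs. I first fix a pole $o \in \mathbb{H}^2$, perturbed if necessary so that $o$ avoids the boundary of every input disk, and work in polar coordinates $(b,\theta)$ around $o$. The layering is defined by $V_i = \{v \in V(G) : b(v) \in [2r(i-1),\,2ri)\}$: since two intersecting radius-$r$ disks have centers at hyperbolic distance at most $2r$, the triangle inequality $|b(u)-b(v)|\leq 2r$ places the endpoints of every edge in the same or consecutive layers. This gives a valid layering $(V_1,V_2,\ldots)$.

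For the tree decomposition, I let $C = \{v : o \in D_v\}$. Every such vertex has $b(v)\leq r$, so $C \subseteq V_1$, and $C$ is a clique in $G$ because all the corresponding disks share $o$. For each $v \notin C$, the convex disk $D_v$ does not contain $o$, so the set of directions from $o$ meeting $D_v$ is a proper arc $[\theta_v^-,\theta_v^+]$ of angular length strictly less than $\pi$. After a generic choice of $o$, some direction $\theta^\ast$ is avoided by every such arc and serves as the seam of the sweep. I sort the $2|V(G)\setminus C|$ endpoints $\theta_v^\pm$ along the sweep starting at $\theta^\ast$, yielding at most $2n$ events that become the nodes of a path $T$; at the event $t_j$ with angle $\theta_j$ I set $X_{t_j} = C \cup \{v \notin C : \theta_j \in [\theta_v^-,\theta_v^+]\}$. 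A standard sweep-line argument then verifies (T1)--(T3), and the total running time is $\mathcal{O}(n\log n)$ after the angular sort.

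The heart of the argument is to bound $\alpha(G[X_{t_j}\cap V_i])$ via a hyperbolic area-packing estimate. For $i \geq 2$, every vertex $v \in X_{t_j}\cap V_i$ satisfies $o \notin D_v$ and $\theta_j \in [\theta_v^-,\theta_v^+]$; the hyperbolic law of sines applied to the right triangle with hypotenuse from $o$ to the centre of $D_v$ gives that the angular half-width of the shadow of $D_v$ equals $\arcsin(\sinh r/\sinh b(v))$, so the centre of $v$ lies in the annular sector $R_i^{\theta_j} = \{(b,\theta) : b \in [2r(i-1),\,2ri),\ |\theta-\theta_j| \leq \arcsin(\sinh r/\sinh b)\}$. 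Using $\arcsin x \leq (\pi/2)x$ on $[0,1]$ and the hyperbolic area element $\sinh b\, db\, d\theta$, I would show that $\mathrm{area}(R_i^{\theta_j})$ is bounded by a multiple of $r\sinh r$, uniformly in $i$ and $j$. The interiors of pairwise disjoint radius-$r$ disks centred in $R_i^{\theta_j}$ lie inside a controlled thickening of $R_i^{\theta_j}$ of area $\mathcal{O}(r\sinh r)$, and each disk has area $2\pi(\cosh r - 1) = 4\pi\sinh^2(r/2) = 2\pi\sinh r\cdot\tanh(r/2)$; dividing yields at most $\mathcal{O}(r/\tanh(r/2))$ disjoint disks, which the elementary estimate $\tanh r \leq 2\tanh(r/2)$ converts to $\mathcal{O}(r/\tanh r)$. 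The case $i=1$ contributes the clique $C$ (adding only $1$ to the independence number) and a non-$C$ part with $b(v)\in(r,2r)$ that is handled by the same area argument.

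The hard part will be the hyperbolic-geometric bookkeeping needed to pin down the exact constant $6$ rather than an arbitrary $\mathcal{O}(1)$: one must (i) invoke the hyperbolic sine rule to describe the angular shadow of a disk as a function of $b$, (ii) establish the area bound $\mathrm{area}(R_i^{\theta_j}) = \mathcal{O}(r\sinh r)$ with an explicit constant uniform across layers, and (iii) show that the relevant thickening of $R_i^{\theta_j}$ has area within a controlled factor of the original so that the packing estimate sharpens to $6\lceil r/\tanh r\rceil$. Two secondary subtleties are the angular seam of the sweep and the layer-$1$ edge case where the pole may be contained in several disks; both are resolved by a generic perturbation of $o$ together with the observation that the set $C$ of such disks is a clique.
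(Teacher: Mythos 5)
Your layering and angular-sweep tree decomposition match the paper's construction (the paper sweeps a ray over the sorted angles $\theta^\pm(v)$ and bags the disks stabbed by each ray), but the heart of your argument --- the area-packing bound --- has a genuine gap that kills it precisely in the firmly hyperbolic regime $r\gg 1$. The region $R_i^{\theta_j}$ indeed has area $\mathcal{O}(r\sinh r)$, but your claim that the pairwise disjoint radius-$r$ disks centred in it lie in a thickening of area $\mathcal{O}(r\sinh r)$ is false: the outer boundary arc of $R_i^{\theta_j}$ (at radius $b=2ri$) has length $2\arcsin(\sinh r/\sinh b)\sinh b\ge 2\sinh r$, and already the outward $r$-collar of this arc has area at least $2\sinh^2 r$. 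Dividing by the disk area $2\pi(\cosh r-1)$ gives $(\cosh r+1)/\pi$, i.e.\ an exponential bound in $r$ rather than $\mathcal{O}(r/\tanh r)$, and no choice of thickening radius repairs this. The underlying reason is that area packing is intrinsically lossy here: the region is exponentially long in boundary length but has small diameter --- for instance, the two endpoints of that outer arc are at hyperbolic distance exactly $2r$ (law of cosines: $\cosh d = 1+2\sinh^2 b\sin^2\phi = \cosh 2r$ when $\sin\phi=\sinh r/\sinh b$), so only $O(1)$ disjoint disks can be centred on it even though its length is $\Theta(\sinh r)$. The paper's proof exploits exactly this: it shows every centre in $X_{t_i}\cap V_j$ lies within hyperbolic distance $r$ of a subsegment of the sweep ray of length $3r\le h\tanh r$ with $h=3\lceil r/\tanh r\rceil$, and covers the resulting strip by $2h$ Saccheri quadrilaterals with base $\tanh r$ and legs $r$, each of diameter at most $2r$ (via the proof of Lemma~8 of Bl\"asius et al.), so each cell hosts at most one centre of an independent set --- giving $6\lceil r/\tanh r\rceil$. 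To salvage your approach you would need to replace the area estimate by such a diameter-based subdivision (or prove that each disjoint disk covers a constant fraction of $R_i^{\theta_j}$ itself, which your outer-corner configuration shows is delicate); as written, steps (ii)--(iii) of your plan cannot be carried out.

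A secondary, fixable issue: the existence of a seam direction $\theta^\ast$ avoided by all angular shadows is not guaranteed by a ``generic'' choice of the pole --- if the disks surround $o$, their shadows cover all of $[0,2\pi)$ for every nearby pole. The paper resolves this by sliding the pole along a ray beyond the farthest point at which any disk meets that ray and taking the remaining subray as the polar axis; then no disk meets the axis (in particular none contains the pole), the shadows do not wrap around, and your clique $C$ becomes unnecessary. With that choice your sweep and the $\mathcal{O}(n\log n)$ bookkeeping are fine; the missing piece is solely the independence bound discussed above.
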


\begin{proof} For each $v \in V(G)$, let $O_v$ denote the disk corresponding to the vertex $v$. We fix a polar-coordinate system for $\mathbb{H}^2$ in such a way that no disk intersects the polar axis, and so the polar angle of any point in a disk belongs to the open interval $(0, 2\pi)$. To see that such choice of polar-coordinate system is possible, start from any pole $o'$ and polar axis $\overrightarrow{k}$. If some disks intersect $\overrightarrow{k}$, consider a point $p$ of intersection with maximum distance from $o'$. Then simply choose the new pole $o$ as a point on $\overrightarrow{k}$ with $d_{\mathbb{H}^2}(o,o') > d_{\mathbb{H}^2}(p,o')$, and as a polar axis the subray of $\overrightarrow{k}$ starting at $o$. 
    
\begin{sloppypar}
For each $i \in \mathbb{N}$, let $R_i = \{x\in \mathbb{H}^2\colon (2i-2)r < b(x) \leq 2ir\}$ and let $V_i = \{v\in V(G)\colon  \mbox{the center of } O_v \mbox{ belongs to } R_i\}$. Clearly, $(V_1, V_2, \ldots)$ is a partition of $V(G)$. 
Furthermore, if two vertices $u$ and $v$ of $G$ are adjacent, then the centers of the corresponding disks are at distance at most $2r$ and so the triangle inequality implies that these centers either belong to the same $R_i$ or to two distinct consecutive $R_i$'s. Hence, $(V_1, V_2, \ldots)$ is a layering of $G$.
\end{sloppypar}

We now build a tree decomposition of $G$. For each $v \in V(G)$, let $\theta^-(v) = \inf_{x\in O_v}\theta(x)$ and let $\theta^+(v) = \sup_{x\in O_v}\theta(x)$. 
Since no disk intersects the polar axis and since the disks are closed, the polar angles of points in $O_v$ spread from $\theta^-(v)$ to $\theta^+(v)$, that is, for any $\theta_0$ with $\theta^-(v) \leq \theta_0 \leq \theta^+(v)$, there exists $x\in O_v$ such that $\theta(x) = \theta_0$. Let now $z_1,\ldots,z_{m}$ be an increasing ordering of $\{\theta^-(v)\colon v\in V(G)\} \cup \{\theta^+(v)\colon v\in V(G)\}$, eliminating any repetition. Let $S_i = \{x\in \mathbb{H}^2\colon \theta(x) = z_i\}$ be the ray with polar angle $z_i$. Let $T$ be a path with $m$ vertices $t_1,\ldots,t_{m}$ and associate each node $t_i$ with the bag $X_{t_i} = \{v \in V(G)\colon O_v \cap S_i \neq \varnothing\}$. 

\begin{nitem}\label{treedechyper}
    $\mathcal{T} = (T,\{X_{t_i}\}_{1\leq i \leq m})$ is a tree decomposition of $G$.
\end{nitem}

\begin{claimproof}[Proof of \eqref{treedechyper}] Consider first (T1). Let $v\in V(G)$ be an arbitrary vertex. We have that $\theta^-(v) = z_i$ for some $i$, and so $v\in X_{t_i}$.
        
Consider now (T2). Let $v_1$ and $v_2$ be two adjacent vertices and fix a point $x \in O_{v_1} \cap O_{v_2}$. Clearly, there exists an index $i$ such that $z_i \leq \theta(x) \leq z_{i+1}$. If $\theta(x) = z_{i+1}$, then both $O_{v_1}$ and $O_{v_2}$ intersect $S_{i+1}$ and hence $v_1$ and $v_2$ both belong to $X_{t_{i+1}}$. If, however, $\theta(x) \neq z_{i+1}$, we claim that both $O_{v_1}$ and $O_{v_2}$ intersect $S_i$. Suppose, to the contrary, that $O_{v_j}$ does not intersect $S_i$, for some $j \in \{1, 2\}$. Then, it must be that $z_i < \theta^-(v_j) \leq \theta(x) < z_{i+1}$, contradicting the definition of $z_1,\ldots,z_{m}$. 

Consider finally (T3). Let $v\in V(G)$ be an arbitrary vertex. There exist indices $i < j$ such that $\theta^-(v)=z_i$ and $\theta^+(v)=z_j$. We have that $v \in X_{t_k}$ if and only if $O_v$ intersects $S_k$, hence, if and only if $i \leq k \leq j$. Therefore, $v$ belongs to bags associated with consecutive nodes of $T$.
\end{claimproof}

\begin{nitem}\label{intersechyper}
    For each bag $X_{t_i}$ and layer $V_j$, $\alpha(G[X_{t_i} \cap V_j]) \leq 6\lceil\frac{r}{\tanh r} \rceil$.
\end{nitem}

\begin{claimproof}[Proof of \eqref{intersechyper}] Fix an arbitrary bag $X_{t_i}$ and layer $V_j$ and let $v \in X_{t_i} \cap V_j$ be an arbitrary vertex. Let $c$ be the center of $O_v$. By definition, $O_v$ intersects $S_i$ and the center $c$ of $O_v$ belongs to $R_j$. Extend $S_i$ to a straight line $\ell$ and let $s$ be the intersection between $\ell$ and the line perpendicular to $\ell$ through $c$. Note that if $c$ lies on $\ell$, then $s = c$. Since $s$ is the point on $\ell$ minimizing the distance to $c$ (see, e.g., \cite[Theorem~2.2]{RR}) and since $O_v$ intersects $S_i$, it must be that $d_{\mathbb{H}^2}(c,s) \leq r$ and so $O_v$ contains $s$. 

We now show that $(2j-3)r \leq b(s) < 2jr$. Suppose first that $b(s) < (2j-3)r$. The triangle inequality implies that $b(c) = d_{\mathbb{H}^2}(c,o) \leq d_{\mathbb{H}^2}(c,s) + d_{\mathbb{H}^2}(s,o) < r + (2j-3)r = (2j-2)r$, from which $c \notin R_j$, a contradiction. Suppose finally that $b(s) \geq 2jr$ and consider again the right triangle with vertices $s$, $c$, and $o$, and whose hypotenuse is $co$. Since the hypotenuse is longer than either of the other two sides (see, e.g., \cite[Corollary~1, p.~34]{RR}), we have that $d_{\mathbb{H}^2}(c,o) > d_{\mathbb{H}^2}(s,o) \geq 2jr$, from which $c \notin R_j$, a contradiction. 

Let now $h = 3\lceil\frac{r}{\tanh r} \rceil$. For each $0\leq k \leq h$, let $\ell_k$ be the line perpendicular to $\ell$ at the point $s_k = (2jr-k\tanh r, z_i) \in \ell$. Let $C$ and $D$ be the two equidistant curves containing all points at distance exactly $r$ from the straight line $\ell$ on either side of $\ell$, respectively. Each $\ell_k$ intersects $C$ in a unique point $c_k$ and intersects $D$ in a unique point $d_k$. Consider now the region $R$ of $\mathbb{H}^2$ bounded by $C$ and $D$ and containing the straight line $\ell$. The center $c$ of $O_v$ must lie in $R$, as $d_{\mathbb{H}^2}(c,s) \leq r$. In fact, $c$ belongs to the subregion $R'$ of $R$ bounded by the line segments $c_0d_0,c_hd_h$, the portion of $C$ with endpoints $c_0$ and $c_{h}$, and the portion of $D$ with endpoints $d_0$ and $d_h$. Indeed, if $c \notin R'$, then the line through $c$ perpendicular to $\ell$, which intersects $\ell$ in the point $s$ with $2jr-h\tanh r \leq (2j - 3)r \leq b(s) < 2jr$, must intersect one of $\ell_0$ and $\ell_h$, contradicting the fact that no two parallel lines intersect. 

We now further split the region $R'$ as follows. For each $0 \leq k \leq h-1$, consider the region bounded by the line segments $c_ks_k,s_ks_{k+1},s_{k+1}c_{k+1}$ and the portion of $C$ with endpoints $c_k$ and $c_{k+1}$; similarly, consider the regions where $c_k$ and $c_{k+1}$ are replaced by $d_k$ and $d_{k+1}$, respectively. 
The boundary of each of these regions is a Saccheri quadrilateral,\footnote{A \textit{Saccheri quadrilateral} is a quadrilateral $abcd$ with right angles at $b$ and $c$ and legs $ab$, $dc$ of equal length (see, e.g., \cite{RR}).} with base of length $\tanh r$ and legs of length $r$. By the proof of \cite[Lemma~8]{BvdH24}, each such region has diameter at most $2r$ and so there are no two non-intersecting disks of radius $r$ whose centers belong to the same region. Since there are $2h$ such regions, there are at most $2h = 6\lceil\frac{r}{\tanh r} \rceil$ pairwise non-intersecting disks whose corresponding vertices belong to $X_{t_i} \cap V_j$.
\end{claimproof}

This concludes the proof of \Cref{layeredhyperbolic}.
\end{proof}

\Cref{layeredhyperbolic} and \Cref{treealphabound} imply the following.

\begin{corollary}\label{treealphahudg} Every hyperbolic uniform disk graph with radius $r$ on $n$ vertices has tree-independence number $\mathcal{O}(\sqrt{\frac{r}{\tanh r}}\cdot \sqrt{n})$. A tree decomposition witnessing this can be computed in $\mathsf{poly}(n)$ time.
\end{corollary}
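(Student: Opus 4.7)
The plan is to obtain this corollary as an immediate consequence of \Cref{layeredhyperbolic} combined with \Cref{treealphabound}, both of which are already established. So there is essentially no new work beyond chaining the two results; the interesting content lives entirely in \Cref{layeredhyperbolic}.

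First, I would invoke \Cref{layeredhyperbolic} on the input hyperbolic uniform disk graph $G$ with radius $r$ on $n$ vertices (given with its geometric realization, as assumed in the preceding subsection). This produces, in $\mathcal{O}(n \log n)$ time, a tree decomposition $\mathcal{T}$ with $|\mathcal{T}| \leq 2n$ and a layering $(V_1, V_2, \ldots)$ of $G$ witnessing layered tree-independence number $k := 6\lceil \frac{r}{\tanh r} \rceil$.

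Next, I would feed this tree decomposition and layering into \Cref{treealphabound}. Since $|\mathcal{T}| \leq 2n$, the algorithm runs in $\mathsf{poly}(n, |\mathcal{T}|) = \mathsf{poly}(n)$ time, and outputs a tree decomposition of $G$ with independence number at most
\[
2\sqrt{kn} \;=\; 2\sqrt{6\bigl\lceil \tfrac{r}{\tanh r} \bigr\rceil \cdot n} \;=\; \mathcal{O}\!\left(\sqrt{\tfrac{r}{\tanh r}} \cdot \sqrt{n}\right).
\]
This is precisely the bound asserted in the statement, and since the two subroutines together run in polynomial time, the algorithmic part follows as well.

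The only thing one needs to check is that the inputs required by \Cref{treealphabound} are indeed produced by \Cref{layeredhyperbolic}, namely a tree decomposition together with a compatible layering certifying the layered tree-independence number; this is exactly what \Cref{layeredhyperbolic} delivers. There is no genuine obstacle here: all of the geometric and combinatorial work has already been carried out inside the proof of \Cref{layeredhyperbolic}, and \Cref{treealphabound} is a black-box reduction from layered tree-independence number to tree-independence number on $n$-vertex graphs. Hence the proof reduces to a two-line application of the two cited results.
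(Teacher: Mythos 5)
Your proposal is correct and is exactly the paper's argument: the paper derives this corollary by directly chaining \Cref{layeredhyperbolic} with \Cref{treealphabound}, just as you do, and the polynomial running time follows from $|\mathcal{T}|\leq 2n$ in the same way. Nothing further is needed.
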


\subsection{Spherical uniform disk graphs}\label{sec:spherical}

Let $\mathbb{S}^2$ denote the \textit{unit sphere} in $\mathbb{R}^3$ centered at the origin, i.e., $\mathbb{S}^2 = \{x \in \mathbb{R}^3\colon \Vert x \Vert = 1\}$. 
The section of $\mathbb{S}^2$ by a plane is a \textit{circle} on $\mathbb{S}^2$ which, in the case the plane passes through the origin, is called a \textit{great circle}. For a point $x \in \mathbb{S}^2$, $x^*$ denotes its \textit{antipodal point}, i.e., the intersection point distinct from $x$ of the (Euclidean) line through $x$ and the origin with $\mathbb{S}^2$. For any two distinct points $a, b \in \mathbb{S}^2$, there is a great circle passing through $a$ and $b$ (if $a$ and $b$ are not antipodal, such a great circle is unique; otherwise, we fix one arbitrarily); the length of the shorter of the two arcs of this great circle is the \textit{spherical distance} between $a$ and $b$, denoted by $d_{\mathbb{S}^2}(a, b)$. Hence, the spherical distance between $a$ and $b$ is nothing but the angle between the vectors $a, b \in \mathbb{S}^2$. The function $d_{\mathbb{S}^2}\colon \mathbb{S}^2\times \mathbb{S}^2 \rightarrow \mathbb{R}$ is a metric on $\mathbb{S}^2$. For $0 < r < \pi$ and $v \in \mathbb{S}^2$, the \textit{spherical disk} (or \textit{spherical cap}) of radius $r$ centered at $v$ is the set $\{u \in \mathbb{S}^2\colon d_{\mathbb{S}^2}(u, v) \leq r\}$.

Recall that every unit disk graph is a spherical uniform disk graph. However, we now provide an infinite family of spherical uniform disk graphs which are not unit disk graphs. For $t\geq 1$, let $K_t$ denote the complete graph on $t$ vertices and, for $t \geq 3$, let $C_t$ denote the cycle on $t$ vertices. The disjoint union $G + H$ of two vertex-disjoint graphs $G$ and $H$ is the graph with vertex set $V(G) \cup V(H)$ and edge set $E(G) \cup E(H)$. The complement $\overline{G}$ of a graph $G$ is the graph with vertex set $V(G)$ in which two distinct vertices are adjacent if and only if they are nonadjacent in $G$.

\begin{proposition}\label{spherical-superclass}
    For every positive integer $k$, the graph $\overline{K_2 + C_{2k+1}}$ is a spherical uniform disk graph but not a unit disk graph.
\end{proposition}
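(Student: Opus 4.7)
To show $\overline{K_2+C_{2k+1}}\in\text{SUDG}$, I would place $a$ at the north pole and $b$ at the south pole of $\mathbb{S}^2$, place the $v_i$'s on the equator at longitudes $\phi_i=\tfrac{2\pi ki}{2k+1}$, and take a common disk radius $r$ slightly smaller than $\tfrac{\pi k}{2k+1}$. Since $\gcd(k,2k+1)=1$, the map $j\mapsto kj\pmod{2k+1}$ is a bijection on $\mathbb{Z}/(2k+1)$; a short computation shows that it sends $j=\pm 1$ (the only non-edges among the $v_i$'s in $\overline{C_{2k+1}}$) to the \emph{maximum} cyclic distance $k$, and every other nonzero $j\in\{\pm 2,\dots,\pm k\}$ (the edges) to cyclic distances at most $k-1$. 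Hence $d_{\mathbb{S}^2}(v_i,v_j)=\tfrac{2\pi k}{2k+1}$ when $|i-j|\equiv\pm 1\pmod{2k+1}$ and $d_{\mathbb{S}^2}(v_i,v_j)\le\tfrac{2\pi(k-1)}{2k+1}$ otherwise. Combined with $d_{\mathbb{S}^2}(a,b)=\pi$ and $d_{\mathbb{S}^2}(a,v_i)=d_{\mathbb{S}^2}(b,v_i)=\tfrac{\pi}{2}$, any $r\in\bigl[\max\{\tfrac{\pi}{4},\tfrac{\pi(k-1)}{2k+1}\},\tfrac{\pi k}{2k+1}\bigr)$ (nonempty for every $k\ge 1$) realizes all the adjacencies of $\overline{K_2+C_{2k+1}}$.

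For the non-UDG direction, I would argue by contradiction: assume a Euclidean unit disk representation with centers $a,b,c_1,\ldots,c_{2k+1}$. From $a\not\sim b$ we have $s:=\|a-b\|>2$, and every $c_i$ lies in the lens $R=\{p\in\mathbb{R}^2:\|p-a\|\le 2,\,\|p-b\|\le 2\}$. The key technical step is to show that each \emph{closed} half of $R$ cut off by the line $L$ through $a$ and $b$ has diameter strictly less than $2$. Placing $L$ as the $x$-axis and setting $\Delta=4-s$, for two points $p_1,p_2$ in the closed upper half with $y_1\ge y_2\ge 0$ the estimates $(y_1-y_2)^2\le y_1^2\le 4-(|x_1|+s/2)^2$ and $(x_1-x_2)^2\le(|x_1|+\Delta/2)^2$ combine to $\|p_1-p_2\|^2\le (4-2s)(|x_1|+1)+4$; since $s>2$ makes $4-2s$ negative, the right-hand side is maximized at $|x_1|=0$ and equals $8-2s<4$.

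With the half-lens bound in hand, each cyclic non-edge $\{v_i,v_{i+1}\}$ of $\overline{K_2+C_{2k+1}}$ satisfies $\|c_i-c_{i+1}\|>2$, so $c_i$ and $c_{i+1}$ cannot both lie in the closed upper (nor both in the closed lower) half of $R$. Since every $v_i$ participates in such a cyclic non-edge, this forces $y_i\ne 0$ for every $i$ (otherwise $c_i$ would sit in both closed halves of $R$, contradicting the distance condition with at least one of its non-adjacent neighbors), and the two endpoints of each non-edge must lie strictly on opposite sides of $L$. Assigning $\sigma_i\in\{+,-\}$ by the side of $L$ containing $c_i$, the cyclic sequence $\sigma_1,\sigma_2,\ldots,\sigma_{2k+1}$ would have to strictly alternate around a cycle of odd length, which is impossible. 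I expect the main technical obstacle to be the half-lens diameter computation itself; the odd-cycle finish is then a clean geometric analogue of the classical ``odd cycles are not bipartite'' obstruction, and specializes for $k=1$ to a known proof that $K_{2,3}$ is not a unit disk graph.
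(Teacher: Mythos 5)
Your proposal is correct, and it splits from the paper in an interesting way. For the SUDG direction you use essentially the paper's construction: the same radius window $\max\{\tfrac{\pi}{4},\tfrac{\pi(k-1)}{2k+1}\}\le r<\tfrac{\pi k}{2k+1}$, two pole disks, and $2k+1$ equally spaced equator disks; your longitude assignment $\phi_i=\tfrac{2\pi k i}{2k+1}$ merely makes explicit (via the bijection $j\mapsto kj$ on $\mathbb{Z}/(2k+1)$) the relabeling that the paper leaves implicit when it notes that the non-intersection graph of the equator disks is the cyclic distance-$k$ circulant, a single $(2k+1)$-cycle. The real divergence is the non-UDG half: the paper simply invokes Atminas and Zamaraev~\cite[Theorem~4.1]{AZ18}, whereas you give a self-contained geometric proof. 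Your key estimate is correct: with $a,b$ at $(\mp s/2,0)$, any $p\in R$ satisfies $y^2\le 4-(|x|+s/2)^2$ and $|x|\le\Delta/2$ with $\Delta=4-s$, so for $y_1\ge y_2\ge 0$ one gets
\[
\|p_1-p_2\|^2\le \bigl(|x_1|+\tfrac{\Delta}{2}\bigr)^2+4-\bigl(|x_1|+\tfrac{s}{2}\bigr)^2
=4+(4-2s)\bigl(|x_1|+1\bigr)\le 8-2s<4,
\]
so each closed half-lens has diameter strictly below $2$; combined with the fact that every $c_i$ has two non-neighbors among $\{c_{i\pm1}\}$, this forces a strict sign alternation of the $c_i$ around an odd cycle, which is impossible. (The small steps are all sound: a center on $L$ would lie in both closed halves, and the parity argument needs only that the non-edges among the $v_i$ form $C_{2k+1}$.) The trade-off: the paper's proof of this direction is a one-line citation, while yours is longer but elementary, self-contained, and a clean generalization of the standard argument that $K_{2,3}$ is not a unit disk graph (the paper's own $k=1$ example).
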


\begin{proof}
    Atminas and Zamaraev~\cite[Theorem~4.1]{AZ18} showed that, for every positive integer $k$, the graph $G = \overline{K_2 + C_{2k+1}}$ is not a unit disk graph.
    It remains to show that $G$ is a spherical uniform disk graph.
    We claim that for any $r$ such that $\max\{\frac{\pi}{4},\frac{\pi (k-1)}{2k+1}\} \leq r < \frac{\pi k}{2k+1}$, the graph $G$ can be obtained as the intersection graph of spherical disks of radius $r$ on $\mathbb{S}^2$.
    To show this, we fix a great circle on $\mathbb{S}^2$, which we call equator. First, we place $2k+1$ disks $D_1, \ldots, D_{2k+1}$ with radius $r$ as above evenly along the equator in such a way that their centers lie on the equator and the centers of any two consecutive disks are at (spherical) distance $2\pi/(2k+1)$.
    Note that, since $\frac{\pi (k-1)}{2k+1} \leq r < \frac{\pi k}{2k+1}$, each disk intersects all but the two disks that are furthest away from it (for instance, $D_1$ intersects all disks other than $D_{k+1}$ and $D_{k+2}$).    
    Therefore, the intersection graph corresponding to the disks on the equator is isomorphic to $\overline{C_{2k+1}}$.
    Now, we add two disks of radius $r$ centered at the respective poles of $\mathbb{S}^2$.
    Note that $r < \pi/2$, and hence such pole disks do not intersect.
    On the other hand, since $r \geq \frac{\pi}{4}$, each pole disk intersects all equator disks.
    We conclude that the corresponding intersection graph is indeed isomorphic to $G$.
\end{proof}

We now introduce a polar-coordinate model of $\mathbb{S}^2$, sometimes known as spherical coordinates. Fix a designated point $o \in \mathbb{S}^2$, called \textit{pole}, its antipodal point $o^*$, and a reference geodesic segment $oo^*$ (i.e., a length-$\pi$ arc $\ell$ on $\mathbb{S}^2$ with endpoints $o$ and $o^*$), called \textit{polar axis}. Any point $p\in \mathbb{S}^2\setminus\{o, o^*\}$ is identified by the pair $(b(p), \theta(p))$, where $b(p)$ denotes the spherical distance between $p$ and $o$, and $\theta(p) \in [0, 2\pi)$ denotes the angle from the polar axis to the geodesic segment $op$. The pair $(b,\theta)$ refers to the point $p \in \mathbb{S}^2$ with $b(p) = b$ and $\theta(p) = \theta$. 

Similarly to the hyperbolic case, in the following results we assume that every spherical uniform disk graph is given together with a geometric realization consisting of the polar coordinates of the disk centers. Moreover, we assume that for a disk $O$, computing $\inf_{x\in O}\theta(x)$ and $\sup_{x\in O}\theta(x)$ can be done in constant time.

\begin{theorem}\label{sphericallayered}
Let $G$ be a spherical uniform disk graph with radius $r$ on $n$ vertices. It is possible to compute, in $\mathcal{O}(n\log n)$ time, a tree decomposition $\mathcal{T}$ with $|\mathcal{T}| \leq 2n$ and a layering $(V_1, V_2, \ldots)$ of $G$ witnessing layered tree-independence number $30$. 
In particular, $G$ has layered tree-independence number at most $30$. 
\end{theorem}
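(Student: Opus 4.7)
The plan is to mirror the proof of Theorem~\ref{layeredhyperbolic}, replacing each hyperbolic input with its spherical analog. I first fix a polar coordinate system with pole $o$ and antipode $o^*$ so that no disk contains $o$ or $o^*$ and no disk meets the polar axis. The case $r \geq \pi/2$ is trivial: two disks then have centers at spherical distance at most $\pi \leq 2r$, so all pairs intersect and $G$ is complete with layered tree-independence number~$1$. For $r < \pi/2$ each disk covers area $2\pi(1-\cos r) < 2\pi$, so a generic polar coordinate system avoids all disks both along the axis and at the poles; the degenerate case of very dense packings can be handled by incorporating the few offending disks (which form a clique through the relevant pole) as clique-type vertices in every bag. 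Layers are defined by $V_i = \{v : b(c_v) \in ((2i-2)r, 2ir]\}$, where $c_v$ is the center of $O_v$, and adjacent vertices lie in identical or consecutive layers by the triangle inequality on $\mathbb{S}^2$ applied to pairs of adjacent disk centers.

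The tree decomposition is built exactly as in the hyperbolic proof: sort the multiset $\{\theta^-(v), \theta^+(v) : v \in V(G)\}$ as $z_1 < \cdots < z_m$ with $m \leq 2n$, form a path $t_1 - \cdots - t_m$, and set $X_{t_i} = \{v : O_v \cap S_i \neq \varnothing\}$, where $S_i$ is the geodesic segment from $o$ to $o^*$ of polar angle $z_i$. Axioms (T1)--(T3) are verified exactly as before, using that $[\theta^-(v), \theta^+(v)]$ is a proper subinterval of $[0, 2\pi)$ because $O_v$ avoids both poles; sorting the $2n$ angular extrema takes $\mathcal{O}(n \log n)$ time, and the resulting path has at most $2n$ nodes.

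The crux is the bound $\alpha(G[X_{t_i} \cap V_j]) \leq 30$. For $v \in X_{t_i} \cap V_j$, let $s_v$ be the foot of the perpendicular from $c_v$ to the great circle containing $S_i$. The spherical Pythagorean identity $\cos d(c_v,o) = \cos d(c_v,s_v) \cdot \cos d(s_v,o)$ forces the hypotenuse $c_v o$ to be at least as long as the leg $s_v o$, which combined with $d(c_v,s_v) \leq r$ and $b(c_v) \in ((2j-2)r, 2jr]$ yields $b(s_v) \in [(2j-3)r, 2jr]$, mirroring the hyperbolic argument. All such centers therefore lie in the spherical strip of half-width $r$ along a length-$3r$ portion of the great circle of $S_i$. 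I will tile this strip with right-legged quadrilaterals (the spherical analog of the Saccheri quadrilaterals from \cite[Lemma~8]{BvdH24}) of base $\tan r$ and legs $r$. Using the spherical law of cosines, one checks that each such quadrilateral has diameter at most $2r$, so each contains the center of at most one disk of any independent set. Since $\tan r \geq r$ for $r \in (0, \pi/2)$, covering the length $3r$ on each of the two sides of the great circle requires only a bounded number of cells, and a small allowance for rounding and boundary effects yields the bound $30$. The main technical obstacle is the spherical-trigonometric verification that the Saccheri-like cells have diameter at most $2r$, together with the handling of the edge case in which a polar axis disjoint from all disks cannot be found, which requires a measure/perturbation argument combined with the clique-type absorption described above.
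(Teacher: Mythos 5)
There are two genuine gaps. First, your general-position step fails: for a fixed $r<\pi/2$ the $n$ disks can easily cover all of $\mathbb{S}^2$ (their total area is about $2\pi n(1-\cos r)$), so no choice of pole and polar axis avoids every disk, and this is not a rare ``degenerate'' case. Your fallback is also wrong: a disk can meet the polar axis (a geodesic of length $\pi$) without containing either pole, and two disks meeting the axis at far-apart points need not intersect, so the offending disks do \emph{not} form a clique. Putting them into every bag is structurally fine, but then you must still bound the independence number they contribute to each bag--layer intersection, and your proposal contains no such bound. This is exactly the extra step in the paper's proof: the vertices $U$ whose disks meet the axis are added to all bags, and $\alpha$ of $U$ restricted to a layer is bounded separately (by $15$), because those disks are stabbed by the axis and their centers lie in an annulus, hence all fit inside a disk of radius $4r$.

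Second, the crux of your count is unsound. The spherical analogue of the hyperbolic Saccheri tiling with base $\tan r$ and legs $r$ does not have diameter at most $2r$ in general: already for $r\gtrsim 1.2$ one has $\tan r>2r$, and $\tan r\to\infty$ as $r\to\pi/2$, so such a ``quadrilateral'' does not even exist on the sphere; likewise the hypotenuse-at-least-leg conclusion you draw from $\cos d(c_v,o)=\cos d(c_v,s_v)\cos d(s_v,o)$ fails once the leg $d(s_v,o)$ exceeds $\pi/2$, which happens for large $j$ or large $r$. Consequently the constant $30$ is never actually derived. The paper avoids all of this: for $v\in X_{t_i}\cap V_j$ it picks any point $s\in O_v\cap S_i$ and uses only the triangle inequality to get $(2j-3)r<b(s)\le(2j+1)r$, hence $d(z,c_v)\le 3r$ for the fixed point $z=((2j-1)r,z_i)$, so every such disk lies in one disk of radius $4r$; then the area ratio $\frac{1-\cos 4r}{1-\cos r}<16$, valid for all $r\in(0,\pi)$ (so no case split at $r=\pi/2$ is needed), bounds the number of pairwise disjoint radius-$r$ disks by $15$, and $15+15=30$ accounts for the axis-avoiding and axis-meeting parts of each bag--layer intersection. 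If you want to salvage your write-up, replace the perpendicular-foot/tiling argument by this triangle-inequality-plus-packing argument and bound the axis-meeting disks per layer the same way instead of assuming they form a clique.
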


\begin{proof} For each $v \in V(G)$, let $O_v$ denote the disk corresponding to the vertex $v$. Fix a polar-coordinate system for $\mathbb{S}^2$, as described above, with pole $o$ and polar axis $\ell$. Note that the polar angle of any point in a disk not intersecting $\ell$ belongs to $(0, 2\pi)$. 

\begin{sloppypar}
For each $i \in \mathbb{N}$, let $R_i = \{x\in \mathbb{S}^2\colon (2i-2)r < b(x) \leq 2ir\}$ and let $V_i = \{v\in V(G)\colon  \text{the center of} \ O_v \ \text{belongs to} \ R_i\}$. Clearly, $(V_1,V_2,\ldots)$ partition $V(G)$. Furthermore, if two vertices $u$ and $v$ of $G$ are adjacent, then the centers of the corresponding disks are at distance at most $2r$ and so the triangle inequality implies that these centers either belong to the same $R_i$ or to two distinct consecutive $R_i$'s. 
Hence, $(V_1, V_2, \ldots)$ is a layering of $G$.
\end{sloppypar}

We now build a tree decomposition of $G$. Let $U$ be the set of vertices whose corresponding disks intersect $\ell$ and let $U' = V(G) \setminus U$. For each $v \in U'$, let $\theta^-(v) = \inf_{x\in O_v}\theta(x)$ and let $\theta^+(v) = \sup_{x\in O_v}\theta(x)$. 
Observe that, for each $v \in U'$, the polar angles of points in $O_v$ spread from $\theta^-(v)$ to $\theta^+(v)$, that is, for any $\theta_0$ with $\theta^-(v) \leq \theta_0 \leq \theta^+(v)$, there exists $x\in O_v$ such that $\theta(x) = \theta_0$. Let $z_0 = 0$ and, in case $U' \neq \varnothing$, let $z_1,\ldots,z_{m}$ be an increasing ordering of $\{\theta^-(v)\colon v\in U'\} \cup \{\theta^+(v)\colon v\in U'\}$, eliminating any repetition (with $m = 0$ if $U' = \emptyset$).
For any $0\le i\le m$, let $S_i = \{x\in \mathbb{S}^2\colon \theta(x) = z_i\}$. 
Note that, for $i = 0$, $S_i$ is just the polar axis $\ell$. Let $T$ be a path with $m+1$ vertices $t_0, t_1,\ldots,t_{m}$ and associate each node $t_i$ with the bag $X_{t_i} = \{v \in U'\colon O_v \cap S_i \neq \varnothing\} \cup U$. Note that, if $U' = \varnothing$, then $U = V(G)$ and $T$ consists of a single vertex $t_0$ and $X_{t_0} = V(G)$. 

\begin{nitem}\label{treedecsphere}
    $\mathcal{T} = (T,\{X_{t_i}\}_{0\leq i \leq m})$ is a tree decomposition of $G$.
\end{nitem}

\begin{claimproof}[Proof of \eqref{treedecsphere}] Consider first (T1). Let $v\in V(G)$ be an arbitrary vertex. If $v \in U$, then $v$ belongs to every bag of $\mathcal{T}$. If $v \in U'$, then $\theta^-(v) = z_i$ for some $i\in \{1,\ldots, m\}$, and so $v\in X_{t_i}$.

Consider now (T2). Let $v_1$ and $v_2$ be two adjacent vertices. The property trivially holds if at least one of $v_1$ and $v_2$ belongs to $U$, as $U$ is included in every bag of $\mathcal{T}$. Therefore, suppose that none of $v_1$ and $v_2$ belongs to $U$. Fix a point $x \in O_{v_1} \cap O_{v_2}$. Clearly, there exists an index $i$ such that $z_i \leq \theta(x) \leq z_{i+1}$. If $\theta(x) = z_{i+1}$, then both $O_{v_1}$ and $O_{v_2}$ intersect $S_{i+1}$. If, however, $\theta(x) \neq z_{i+1}$, then both $O_{v_1}$ and $O_{v_2}$ intersect $S_i$, since otherwise $O_{v_j}$ does not intersect $S_i$ for some $j \in \{1, 2\}$, and so $z_i < \theta^-(v_j) \leq \theta(x) < z_{i+1}$, contradicting the definition of $z_1,\ldots,z_{m}$. 

Consider finally (T3). Let $v\in V(G)$ be an arbitrary vertex. If $v \in U$, the property trivially holds. If $v \in U'$, there exist indices $i < j$ such that $\theta^-(v)=z_i$ and $\theta^+(v)=z_j$, and so $v$ belongs precisely to the bags $X_{t_k}$ with $i \leq k \leq j$.
\end{claimproof}

\begin{nitem}\label{intersecsphere}
    For each bag $X_{t_i}$ and layer $V_j$, $\alpha(G[X_{t_i} \cap V_j]) \leq 30$.
\end{nitem}

\begin{claimproof}[Proof of \eqref{intersecsphere}] Fix an arbitrary bag $X_{t_i}$ and layer $V_j$. We show that there exists a disk of radius $4r$ containing all disks $O_v$ with $v \in X_{t_i} \cap V_j \cap U'$. To this end, take an arbitrary $v \in X_{t_i} \cap V_j \cap U'$ and let $c$ be the center of $O_v$. Let $s \in O_v \cap S_i$. The triangle inequality implies that $b(s) = d_{\mathbb{S}^2}(s,o) \leq d_{\mathbb{S}^2}(s,c) + d_{\mathbb{S}^2}(c,o) \leq r + 2jr = (2j+1)r$. Similarly, $b(s) = d_{\mathbb{S}^2}(s,o) \geq d_{\mathbb{S}^2}(c,o) - d_{\mathbb{S}^2}(c,s) > (2j-2)r - r = (2j-3)r $. Consider now the arc $\{x \in S_i\colon  (2j-3)r \leq b(s) \leq (2j+1)r\}$ and let $z = ((2j-1)r,z_i)$ be its midpoint. Again by the triangle inequality, we obtain that $d_{\mathbb{S}^2}(z,c) \leq d_{\mathbb{S}^2}(z,s) + d_{\mathbb{S}^2}(s,c) \leq 2r + r = 3r$. This implies that the disk centered at $z$ and with radius $4r$ fully contains the disk $O_v$, as claimed. 

Since $S_0$ stabs all disks $O_v$ with $v \in V_j \cap U$, a similar argument as in the previous paragraph shows that there exists a disk of radius $4r$ containing all disks $O_v$ with $v \in X_{t_i} \cap V_j \cap U$.   

But a disk of radius $r$ has area $2\pi(1-\cos r)$ (see, e.g., \cite[Example~4.1]{MM}) and so the number of pairwise non-intersecting disks $O_v$ with $v \in X_{t_i} \cap V_j \cap U'$ is at most $\frac{2\pi(1-\cos 4r)}{2\pi(1-\cos r)} = \frac{1-\cos 4r}{1-\cos r}$. We now bound this quantity using the fact that $r \in (0, \pi)$. 

\begin{nitem}\label{cosineq}
    $\frac{1-\cos 4r}{1-\cos r} < 16$, for every $r \in (0, \pi)$.
\end{nitem}

To prove \eqref{cosineq}, it is enough to show that the function $f\colon[0,\pi)\to \mathbb{R}$ defined by $f(r) = 15-16\cos r+\cos 4r$ satisfies $f(r) > 0$ for every $r\in (0,\pi)$.
Since $f(0) = 0$, it suffices to show that its derivative $f'$ is positive on $(0,\pi)$. Observe that $f'(r) = 16\sin r -4\sin 4r$, so it suffices to show that $4\sin r > \sin 4r$ for every $r\in (0,\pi)$. But it is known that $|n\sin r| \geq |\sin nr|$ for every $n \in \mathbb{N}$ and $r \in \mathbb{R}$ (this can be shown, for example, by an easy induction). Hence, for every $r\in (0,\pi)$, it holds that $4\sin r \geq \sin 4r$, and it is easy to see that no $r \in (0, \pi)$ achieves equality.     

Applying a similar reasoning as above, the number of pairwise non-intersecting disks $O_v$ with $v \in X_{t_i} \cap V_j \cap U$ is at most $15$. Therefore, there are at most $15+15 = 30$ pairwise non-intersecting disks whose corresponding vertices belong to $X_{t_i} \cap V_j$.   
\end{claimproof}
This concludes the proof of \Cref{sphericallayered}.
\end{proof}

\Cref{sphericallayered} and \Cref{treealphabound} imply the following.

\begin{corollary}\label{treealphasudg} Every spherical uniform disk graph with radius $r$ on $n$ vertices has tree-independence number $\mathcal{O}(\sqrt{n})$. A tree decomposition witnessing this can be computed in $\mathsf{poly}(n)$ time.
\end{corollary}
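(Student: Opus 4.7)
The plan is to combine the two ingredients already at hand. First, I would invoke \Cref{sphericallayered} on the given spherical uniform disk graph $G$ to compute, in $\mathcal{O}(n\log n)$ time, a tree decomposition $\mathcal{T}$ with $|\mathcal{T}|\le 2n$ together with a layering $(V_1,V_2,\ldots)$ of $G$ witnessing layered tree-independence number at most $30$. Crucially, the bound $30$ is a universal constant, independent of the radius $r$, so this step absorbs all the geometry and reduces the problem to a purely combinatorial one.

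Second, I would feed the pair $(\mathcal{T},(V_1,V_2,\ldots))$ into \Cref{treealphabound} with $k=30$. That lemma guarantees that, in time $\mathsf{poly}(n,|\mathcal{T}|)=\mathsf{poly}(n)$, one can convert such a witness into a tree decomposition of $G$ of independence number at most $2\sqrt{kn}=2\sqrt{30n}=\mathcal{O}(\sqrt{n})$. This yields both the existential bound and the algorithmic claim simultaneously.

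There is essentially no obstacle: the corollary is a direct pipelining of \Cref{sphericallayered} and \Cref{treealphabound}, and the only mild check is that the polynomial running time of \Cref{treealphabound} remains polynomial in $n$ because $|\mathcal{T}|\le 2n$. In particular, the constant $30$ in \Cref{sphericallayered} propagates into the hidden constant of the $\mathcal{O}(\sqrt{n})$ bound as $2\sqrt{30}$, but is not optimized further, matching the known $\Omega(\sqrt{n})$ lower bound (up to constants) coming from the fact that spherical uniform disk graphs contain $\sqrt{n}\times\sqrt{n}$ grids, whose tree-independence number is $\Omega(\sqrt{n})$ (as recorded in the discussion following the results \eqref{item:map}--\eqref{item:spher}).
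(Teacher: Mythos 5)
Your proposal is correct and is exactly the paper's argument: the corollary is stated as an immediate consequence of pipelining \Cref{sphericallayered} (layered tree-independence number at most $30$, with $|\mathcal{T}|\le 2n$ computed in $\mathcal{O}(n\log n)$ time) into \Cref{treealphabound} with $k=30$, giving independence number $2\sqrt{30n}=\mathcal{O}(\sqrt{n})$ in $\mathsf{poly}(n)$ time. Nothing further is needed.
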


\section{Bounded clique cover degeneracy}\label{s:ccdeg}

In this section we bound the clique cover degeneracy for the classes discussed in Sections \ref{powersbounds} to \ref{sec:spherical}. The fact that powers of graphs with bounded layered treewidth have bounded clique cover degeneracy will follow from \Cref{cliquecoverdegpower}. 
We now prove the following bound for $0$-map graphs, which extends the tight bound of $3$ for the subclass of planar graphs shown by Ye and Borodin~\cite{YB12}.

\begin{proposition}\label{0-map-theta-degen}
    Every $0$-map graph has a vertex whose neighborhood is the union of $3$ cliques.
\end{proposition}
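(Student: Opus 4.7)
The plan is to fix a planar bipartite witness $H$ of the $0$-map graph $G$ with bipartition $(X,Y)$, so that $G = H^2[X]$, and to exhibit a vertex $v \in X$ whose $G$-neighborhood is the union of at most three cliques. The key observation is that for every $v \in X$,
\[
N_G(v) \;=\; \bigcup_{y \in N_H(v)} \bigl(N_H(y) \setminus \{v\}\bigr),
\]
expressing $N_G(v)$ as a union of $\deg_H(v)$ cliques of $G$ indexed by the $Y$-neighbors of $v$ in $H$. A natural sufficient condition is therefore to find $v \in X$ with $\deg_H(v) \leq 3$ in a suitable witness.

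First I would normalize $H$ by choosing it to be vertex-minimal: delete every $y \in Y$ with $\deg_H(y) \leq 1$ (these contribute no edges to $G$), and iteratively eliminate any $y_1 \in Y$ for which $N_H(y_1) \subseteq N_H(y_2)$ for some distinct $y_2 \in Y$. After this cleanup, $\deg_H(y) \geq 2$ for every $y \in Y$ and no $Y$-vertex is subsumed by another. Second, I would apply Euler's formula: since $H$ is planar bipartite, every face has length at least $4$, so $|E(H)| \leq 2|V(H)| - 4$. In the easier case where every $y \in Y$ satisfies $\deg_H(y) \geq 4$, we obtain $4|Y| \leq |E(H)| \leq 2|X| + 2|Y| - 4$, which gives $|Y| \leq |X| - 2$ and hence $|E(H)| \leq 4|X| - 8$. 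Since $\sum_{x \in X} \deg_H(x) = |E(H)|$, some $v \in X$ has $\deg_H(v) \leq 3$, yielding the desired three-clique cover.

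The harder case is when $Y$ contains vertices of degree $2$ or $3$. Here I would run a discharging argument extending the one of Ye and Borodin~\cite{YB12}. Assign each $w \in V(H)$ the charge $\deg_H(w) - 4$ and each face $f$ the charge $|f| - 4$; Euler's formula gives total charge $-8$, and every face charge is nonnegative since $H$ is bipartite. Discharging rules transfer charge to each low-degree $y \in Y$ from its $X$-neighbors (for example, a degree-$2$ vertex receives $1$ from each of its two $X$-neighbors, and a degree-$3$ vertex receives $1/3$ from each of its three $X$-neighbors) so that every $y \in Y$ ends up with nonnegative charge. Consequently, some $v \in X$ has strictly negative final charge, and a local analysis exploiting the planar embedding of $H$ and the minimality of the witness then extracts a partition of $N_G(v)$ into three cliques. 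Concretely, a degree-$2$ $Y$-neighbor of $v$ contributes the singleton clique $\{u\}$ (where $u$ is the other $X$-neighbor of that $Y$-vertex), which can be absorbed by any clique of the decomposition containing $u$; an analogous absorption handles degree-$3$ $Y$-neighbors, whose contribution is an edge-clique of $G$.

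The main obstacle is the case analysis verifying that the small cliques coming from low-degree $Y$-neighbors of $v$ can always be merged into three larger cliques of $G$. This step hinges on the minimality of $H$, which constrains how low-degree $Y$-vertices can cluster around a single $X$-vertex, and on the planar embedding of $H$, which controls how the cliques at $v$ interact. The configurations with many degree-$2$ or degree-$3$ $Y$-neighbors clustered at the same $v \in X$ are the most delicate, and they are exactly what the discharging rule is tuned to handle; correctness rests on careful bookkeeping of the resulting charges in these worst cases.
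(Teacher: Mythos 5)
Your reduction to the witness $H$ is sound as far as it goes: $N_G(v)=\bigcup_{y\in N_H(v)}\bigl(N_H(y)\setminus\{v\}\bigr)$ does exhibit $N_G(v)$ as a union of $\deg_H(v)$ cliques, and your Euler-formula count correctly handles the case where every $y\in Y$ has degree at least $4$. But the ``harder case'' is not a corner case --- it is the heart of the matter, and your sketch does not resolve it. When every point of the map is shared by at most three nations (for instance when $G$ is itself a planar graph, realized with all $Y$-degrees equal to $2$ or $3$), no amount of cleaning up the witness produces a vertex $v\in X$ with $\deg_H(v)\le 3$: a planar triangulation of minimum degree $5$ is a $0$-map graph whose nation vertices all have witness degree at least $5$. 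In that regime every clique $N_H(y)\setminus\{v\}$ is a single vertex or an edge, so your ``absorption'' step has nothing to absorb into --- there is no pre-existing decomposition of $N_G(v)$ into three large cliques containing the vertex $u$; producing one is exactly the statement of the Ye--Borodin theorem that every planar graph has a vertex whose neighborhood is a union of three cliques. Your discharging rules only certify the existence of an $X$-vertex of negative charge; the ``local analysis exploiting the planar embedding and the minimality of the witness'' that is supposed to turn this into a three-clique cover is precisely the missing case analysis, and as stated the absorption claims fail in the basic planar instance above. So the proposal is incomplete at its crucial step.

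For comparison, the paper avoids re-proving any planar discharging result. It builds a different auxiliary graph $H$: nations are vertices, a corner vertex is introduced only for \emph{large} corners (points shared by at least four nations), two nations sharing only small corners are joined directly by an edge, and a local redrawing around points shared by exactly three nations makes $H$ planar. Applying the Ye--Borodin planar theorem to $H$ as a black box gives a vertex $v$ whose $H$-neighborhood is a union of three cliques; $v$ cannot be a corner vertex (corner neighborhoods are independent sets of size at least $4$), each of the three cliques contains at most one corner vertex, and replacing that corner vertex by its $H$-neighborhood (a clique of $G$ after cliquification) converts the cover into three cliques covering $N_G(v)$. If you want to salvage your route, you would need to actually carry out the extension of the Ye--Borodin discharging to bipartite witnesses with low-degree $Y$-vertices, which is substantially more work than invoking the planar theorem on a suitably engineered planar graph.
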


\begin{proof}
    Let $G$ be a $0$-map graph and let $G_0$ be a plane graph (i.e., a graph of Euler genus $0$) whose faces are either nations or lakes, where nations correspond to the vertices of $G$, two vertices being adjacent in $G$ if and only if the corresponding nations of $G_0$ share at least a vertex.

    Let $X$ be the set of \emph{large corners} of $G_0$, that is, points of the plane that are shared by at least $4$ nations.
    We create a graph $H$ as follows.
    For each nation $c_i$ of $G_0$, we create a \textit{nation vertex} $c_i$ in $H$ and, for each large corner of $G_0$, we create a \textit{corner vertex} $x$ in $H$. The adjacencies in $H$ are defined as follows. For every pair of neighboring (i.e., sharing at least one vertex) nations $c_i$ and $c_j$ of $G_0$ that do not share a large corner, we add an edge $c_ic_j$ in $H$.
    For each large corner of $G_0$, we add an edge in $H$ between the corresponding corner vertex $x$ and each nation vertex $c_i$ corresponding to a nation containing the large corner. No other edges belong to $E(H)$. In particular, the set of corner vertices of $H$ forms an independent set. Moreover, if two nation vertices $v_i$ and $v_j$ of $H$ are both adjacent to a corner vertex $x$, then $v_i$ and $v_j$ are not adjacent in $H$, and so the neighborhood of each corner vertex is an independent set of size at least $4$.

    Observe now that if a vertex $p$ of $G_0$ is shared by exactly three nations, then we can assume that every two of these three nations share a private point of the plane along their borders, by slightly changing the plane graph $G_0$ locally around the point $p$ if necessary. This implies that $H$ is planar. 
    Moreover, it is not difficult to see that $G$ is isomorphic to the graph obtained from the neighborhood cliquification on $(H,X)$ followed by deleting $X$.
        
    We can finally show that there exists a vertex $v$ of $G$ whose neighborhood is the union of $3$ cliques. Recall that every planar graph contains a vertex whose neighborhood is the union of $3$ cliques~\cite{YB12}.
    Since $H$ is planar, there is a vertex $v$ in $H$ whose neighborhood is the union of $3$ cliques, say $C_1$, $C_2$, and $C_3$.
    Since the neighborhood of each corner vertex is an independent set of size at least $4$ in $H$, the vertex $v$ cannot be a corner vertex.
    Hence, $v \in V(H) \setminus X = V(G)$.
    Let $X_i$ be the set of corner vertices in $C_i$ (that is, $X_i = X\cap C_i$).
    Note that, since $X$ is an independent set in $H$, we have $|X_i|\le 1$. 
    For $i\in \{1,2,3\}$, let $C^*_i = (C_i \setminus X_i) \cup (\bigcup_{x \in X_i} N_H(x))$.
    Note that each $C^*_i$ is a subset of $V(G)$, as it does not contain corner vertices (which form an independent set).
    Furthermore, if $X_i$ is not empty, then $X_i = \{x\}$ for some $x\in V(H)$, and every vertex in $C^*_i$ is adjacent to $x$ in $H$.
    Therefore, since $G$ is isomorphic to the graph obtained from the neighborhood cliquification on $(H,X)$ followed by deleting $X$, each set $C^*_i$ is a clique in $G$, and $N_G(v) = C^*_1 \cup C^*_2 \cup C^*_3$.
    Therefore, $v$ is a vertex of $G$ whose neighborhood is the union of $3$ cliques.
\end{proof}

The clique cover degeneracy of hyperbolic uniform disk graphs is at most $3$, as shown by Bl{\"{a}}sius et al.~\cite{BvdH24}, and we now bound that of spherical uniform disk graphs.  

\begin{proposition}\label{sphericalcliquecover}
    Every spherical disk graph has a vertex whose neighborhood is the union of $6$~cliques.
\end{proposition}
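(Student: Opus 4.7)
The plan is to pick a disk $D_0$ of minimum spherical radius $r_0$, centered at $c_0\in\mathbb{S}^2$, and cover the neighborhood of the corresponding vertex $v_0$ by six cliques using an angular partition at $c_0$. The key geometric facts we use throughout are $r_v\ge r_0$ and $d_{\mathbb{S}^2}(c_0,c_v)\le r_0+r_v$ for every neighbor $D_v$.

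First I fix a spherical polar-coordinate system with pole $c_0$ and partition $\mathbb{S}^2\setminus\{c_0,c_0^{*}\}$ into six angular sectors $S_1,\ldots,S_6$ of angular width $\pi/3$ at $c_0$. For each $k$ set $N_k=\{v\in N(v_0):c_v\in S_k\}$ and adjoin any \emph{exceptional} neighbor with $c_v\in\{c_0,c_0^{*}\}$ to $N_1$. Exceptional neighbors are disposed of directly: if $c_v=c_0$ then $r_v\ge r_0$ gives $D_0\subseteq D_v$, so $D_v$ meets every disk meeting $D_0$; if $c_v=c_0^{*}$ then $D_v\cap D_0\neq\varnothing$ forces $r_v\ge\pi-r_0$, and for any other neighbor $w$ the spherical identity $d_{\mathbb{S}^2}(c_0^{*},c_w)=\pi-d_{\mathbb{S}^2}(c_0,c_w)\le \pi\le r_v+r_w$ yields $D_v\cap D_w\neq\varnothing$. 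Hence exceptional neighbors are universally adjacent inside $N(v_0)$ and can freely be placed in any clique.

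The main step is to show that any two non-exceptional $v,w\in N_k$ give intersecting disks. Since $c_v,c_w\in S_k$, the spherical angle $\theta=\angle c_vc_0c_w$ is at most $\pi/3$, so $\cos\theta\ge 1/2$ and the spherical law of cosines applied to the geodesic triangle $c_0c_vc_w$ gives
\[
\cos d_{\mathbb{S}^2}(c_v,c_w)\;\ge\;\cos a\cos b+\tfrac12\sin a\sin b,
\]
where $a=d_{\mathbb{S}^2}(c_0,c_v)\le r_0+r_v$ and $b=d_{\mathbb{S}^2}(c_0,c_w)\le r_0+r_w$. The case $r_v+r_w\ge\pi$ is automatic since spherical distances are at most $\pi$; otherwise $r_v,r_w\le\pi/2$ (so that $r_0+r_v,r_0+r_w\le\pi$), and it suffices to verify the trigonometric inequality
\[
\cos(r_0+r_v)\cos(r_0+r_w)+\tfrac12\sin(r_0+r_v)\sin(r_0+r_w)\;\ge\;\cos(r_v+r_w).
\]

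Via product-to-sum identities this reduces to $3\sin r_v\sin r_w\ge\sin r_0\sin(r_0+r_v+r_w)$. This is trivial when $r_0+r_v+r_w\ge\pi$ (the right-hand side is then non-positive); in the remaining regime, the function $h(r_0)=\sin r_0\sin(r_0+r_v+r_w)$ satisfies $h'(r_0)=\sin(2r_0+r_v+r_w)$ and hence has at most one interior maximum on $[0,r_v]$, located at $r_0^{\ast}=(\pi-r_v-r_w)/2$. Checking the two candidate maximizers finishes the proof: at $r_0^{\ast}$ the inequality reduces to $3\cos^2\bigl((r_v-r_w)/2\bigr)\ge 4\cos^2\bigl((r_v+r_w)/2\bigr)$, which follows (whenever $r_0^{\ast}\le r_v$) from a short estimate of the form $3\cos 2x+4\sin 2x\ge 1$ on $[0,\pi/2]$, while at the endpoint $r_0=r_v$ the inequality reduces to $3\sin r_w\ge\sin(2r_v+r_w)$, which follows from $\sin r_v\cos r_v\cos r_w\le\sin r_w$ using $r_v\le r_w\le\pi/2$. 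Establishing this final trigonometric inequality is the main technical obstacle; the rest of the argument is a direct angular-sector partition.
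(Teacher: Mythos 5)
Your overall strategy (pick a minimum-radius disk $D_0$, split the sphere into six angular sectors of width $\pi/3$ at its center $c_0$, and show any two neighbors whose centers fall in one sector intersect) is a legitimate route, and it is genuinely different from the paper's proof, which instead stereographically projects from a point $p$ avoiding all boundary circles, replaces the disks containing $p$ by small Euclidean disks, and then invokes the known Euclidean fact that the neighborhood of a smallest disk is a union of $6$ cliques. However, as written your argument has a real gap at its central step. You only know $a=d_{\mathbb{S}^2}(c_0,c_v)\le r_0+r_v$ and $b=d_{\mathbb{S}^2}(c_0,c_w)\le r_0+r_w$, yet you claim ``it suffices to verify'' the inequality at the single corner $(a,b)=(r_0+r_v,r_0+r_w)$. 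The function $F(a,b)=\cos a\cos b+\tfrac12\sin a\sin b$ is \emph{not} monotone decreasing in $(a,b)$ on the relevant rectangle (indeed $\partial F/\partial a>0$ wherever $\tan b>2\tan a$), so the worst case need not occur at that corner, and nothing in your text rules out a smaller value of $F$ at an interior or edge point. The reduction is in fact salvageable (one can check that along the edges $a=r_0+r_v$ and $b=r_0+r_w$ the interior critical points are local maxima, so the minimum over the rectangle sits at a corner, and the corners with $a=0$ or $b=0$ are handled by $r_0\le r_v,r_w$), but that analysis is missing and is exactly the point where the spherical case differs from the Euclidean one. A cleaner fix avoiding all the trigonometry: in the spherical triangle $c_0c_vc_w$ the angle at $c_0$ is at most $\pi/3$ while the angle sum exceeds $\pi$, so some other angle is at least as large; since in a spherical triangle a larger angle faces a longer side, $d_{\mathbb{S}^2}(c_v,c_w)\le\max(a,b)\le r_0+\max(r_v,r_w)\le r_v+r_w$.

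Two further concrete errors compound this. First, ``$r_v+r_w<\pi$ implies $r_v,r_w\le\pi/2$'' is false (take $r_v$ tiny and $r_w$ close to $\pi$), and your endpoint case $r_0=r_v$ explicitly uses $r_v\le r_w\le\pi/2$, so the regime $r_w>\pi/2$ is simply not covered (it is easy to patch: if $\cos r_w\le 0$ then $\sin(2r_v+r_w)\le\cos(2r_v)\sin r_w\le\sin r_w\le 3\sin r_w$). Second, the auxiliary estimate ``$3\cos 2x+4\sin 2x\ge 1$ on $[0,\pi/2]$'' invoked for the interior maximizer $r_0^{\ast}$ is false as stated (at $x=\pi/2$ the left side is $-3$); the target inequality $3\cos^2\bigl((r_v-r_w)/2\bigr)\ge 4\cos^2\bigl((r_v+r_w)/2\bigr)$ does hold under $3r_v+r_w\ge\pi$ and $r_v\le r_w$ (it is equivalent to $\tan(r_v/2)\tan(r_w/2)\ge 7-4\sqrt3$, whose minimum over that region is $\tan^2(\pi/8)=3-2\sqrt2$), but this needs a correct derivation rather than the stated estimate. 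So the skeleton of your proof is sound and several of your algebraic reductions (e.g., to $3\sin r_v\sin r_w\ge\sin r_0\sin(r_0+r_v+r_w)$) check out, but the corner reduction, the $\pi/2$ case split, and the final estimate all need repair before this constitutes a proof; alternatively, the paper's projection argument or the angle-comparison argument above sidesteps these issues entirely.
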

\begin{proof}

Let $G$ be a spherical disk graph and let $\mathcal{D}$ be the corresponding collection of spherical disks on $\mathbb{S}^2$. Fix a point $p \in \mathbb{S}^2$ not on the boundary of any spherical disk of $\mathcal{D}$ and apply a stereographic projection $\varphi$ from $p$. It is well known that circles on $\mathbb{S}^2$ not containing $p$ are sent to circles on $\mathbb{R}^2$ (see, e.g., \cite[Theorem~1.11]{MM}). Consequently, disks on $\mathbb{S}^2$ containing $p$ in their interior are sent to complements of open disks on $\mathbb{R}^2$. Consider now the subcollection of spherical disks in $\mathcal{D}$ not containing $p$; we may assume it is non-empty, for otherwise $G$ is a complete graph and the statement is trivial. Fix one of such spherical disks $A$ such that its image $\varphi(A)$ is a Euclidean disk with the smallest radius. We aim to show that the neighborhood of $A$ can be covered by six cliques. 

Let $B \in \mathcal{D}$ be a spherical disk containing $p$ and intersecting $A$ in a point $s \neq p$. Clearly, $\varphi(B)$ intersects $\varphi(A)$ in $\varphi(s)$. We draw a Euclidean disk $C$, of the same radius as $\varphi(A)$, inside the region $\varphi(B)$ and containing the point $\varphi(s)$, and then replace $\varphi(B)$ with $C$. Applying this to every spherical disk containing $p$ and intersecting $A$, we obtain a collection of objects such that every object intersecting $\varphi(A)$ is a Euclidean disk of radius at least that of $\varphi(A)$. Let $G'$ be the intersection graph of such collection. Since the neighborhood of every smallest disk in a Euclidean disk graph is the union of $6$ cliques (see, e.g., \cite[Lemma~6]{KT14}), the neighborhood of $\varphi(A)$ can be covered by six cliques in $G'$. 
Since $G'[N_{G'}(\varphi(A))]$ is a spanning subgraph of $G[N_{G}(A)]$, we obtain that $G[N_{G}(A)]$ can be covered by six cliques.
\end{proof}

Even though contact string graphs have unbounded layered tree-independence number, we finally argue that they have clique cover degeneracy at most $4$.

\begin{proposition}\label{csg-ind-deg-4}
    Every contact string graph has a vertex whose neighborhood is the union of $4$ cliques.
\end{proposition}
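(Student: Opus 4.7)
The plan is an averaging argument over an arbitrary curve contact representation $\mathcal{R} = \{P_u : u \in V(G)\}$ of $G$: I aim to find some $P^* \in \mathcal{R}$ that carries at most four contact points of $\mathcal{R}$, since then the neighborhood $N_G(P^*)$ is automatically a union of at most four cliques. Indeed, at every contact point $p$ the curves of $\mathcal{R}$ containing $p$ pairwise intersect at $p$ and hence correspond to a clique in $G$; moreover, every neighbor of $P^*$ must meet $P^*$ at some point of $\mathcal{R}$ lying on $P^*$, which by definition is a contact point. Assigning each neighbor to one such point therefore covers $N_G(P^*)$ by cliques indexed by the contact points of $\mathcal{R}$ lying on $P^*$.

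To produce $P^*$, for each $P_u \in \mathcal{R}$ let $I_u$ denote the number of contact points of $\mathcal{R}$ lying in the interior $\mathring{P_u}$. Since $\partial(P_u)$ contributes at most $2$ additional contact points, $P_u$ carries at most $I_u + 2$ contact points. The key estimate in the plan is
\[
\sum_{u \in V(G)} I_u \;\le\; 2|V(G)|,
\]
after which pigeonhole delivers some $P^* \in \mathcal{R}$ with $I_{P^*} \le 2$, hence carrying at most $4$ contact points, completing the argument via the first paragraph.

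The displayed inequality will follow from interior-disjointness in $\mathcal{R}$. Each contact point of $\mathcal{R}$ lies in the interior of at most one curve, so $\sum_u I_u$ equals the number of contact points that are interior to some curve. If such a point $p$ lies in $\mathring{P_u}$, then any other curve $P_v$ containing $p$ cannot have $p$ in its own interior, so $p \in \partial(P_v)$; thus $p$ coincides with an endpoint of at least one curve of $\mathcal{R}$. Since distinct contact points correspond to distinct points of $\mathbb{R}^2$ and $\mathcal{R}$ has only $2|V(G)|$ curve endpoints in total, the bound follows. This structural use of interior-disjointness is the only nontrivial step in the plan; the rest is routine bookkeeping around the definition of contact points.
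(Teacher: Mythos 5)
Your proof is correct, and it reaches the bound by a somewhat different mechanism than the paper. The paper's proof deletes the edges coming from endpoint-to-endpoint contacts, orients each remaining edge toward the string whose interior hosts the contact point, notes that every vertex has out-degree at most $2$ (each endpoint of a string lies in the interior of at most one other string), and so finds a vertex $x$ of in-degree at most $2$; the cover of $N(x)$ then consists of the two cliques of strings meeting $x$ at its endpoints plus at most two leftover neighbors taken as singletons. You instead count contact points: since interiors are pairwise disjoint, every contact point interior to some string is an endpoint of another string, so at most $2|V(G)|$ contact points are interior to strings, and averaging gives a string $P^*$ with at most $2$ interior contact points, hence at most $4$ contact points in total; its neighborhood is covered by the at most $4$ cliques of strings through these points. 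The two arguments rest on the same underlying facts (disjoint interiors and $2n$ endpoints) and both are averaging arguments, but your decomposition of the neighborhood is genuinely different: your four cliques are "contact-point stars" on $P^*$ (so, e.g., many strings ending at one interior point of $P^*$ are absorbed into a single clique rather than inflating a degree count), you in fact prove the slightly stronger statement that some string carries at most four contact points, and you avoid the paper's tie-breaking issue for pairs of strings with two contact points interior to different members of the pair. What the paper's route buys is the familiar degeneracy template (bounded out-degree forces a low in-degree vertex), which makes the extension to an iterated vertex ordering, used right after for the $\chi \le 4\omega$ corollary, completely standard; your version supports that corollary equally well.
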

\begin{proof}
    Let $G$ be a contact string graph. Consider the graph $G'$ obtained from $G$ by removing all the edges that correspond to pairs of strings touching at their endpoints.
    Therefore, for any edge $uv$ of $G'$, the vertices $u$ and $v$ touch at an interior point of either $u$ or $v$.
    We orient the edges of $G'$ as follows: if $u$ and $v$ touch at an interior point of $v$, we orient the edge $uv$ toward $v$; otherwise, we orient $uv$ toward $u$.\footnote{Note that it is possible that both conditions are true at the same time. In this case, the orientation depends on which condition is checked first.}
    Note that every vertex has at most two out-neighbors in $G'$, since each of the endpoints of the corresponding string can touch at most one interior point of another string.
    Thus, there must exist a vertex $x$ with in-degree at most $2$ in $G'$.
    Consider such a vertex $x$ in $G$.
    Let $A$ be the set of neighbors of $x$ such that the contact point with $x$ is an endpoint of the string corresponding to $x$.
    Note that $G[A]$ can be covered with at most two cliques.
    Therefore, since $|N(x) \setminus A| \leq 2$, we conclude that the neighborhood of $x$ can be covered with at most four cliques.
\end{proof}

Given a contact string graph $G$, iteratively removing vertices whose neighborhoods can be covered with $4$ cliques and then greedily coloring the vertices in the opposite order, we obtain a coloring of $G$ with at most $4\omega(G)$ colors.
Hence, the following result holds. It complements the inequality $\chi(G)\le 2\omega(G)$, valid for one-sided contact string graphs \cite{H98}.

\begin{corollary}
If $G$ is a contact string graph, then $\chi(G)\le 4\omega(G)$. 
\end{corollary}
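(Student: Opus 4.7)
The plan is to combine \Cref{csg-ind-deg-4} with the hereditary nature of contact string graphs and a standard greedy coloring argument.

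First, I would observe that the class of contact string graphs is hereditary: removing a vertex corresponds to deleting its string from a curve contact representation, and the remaining strings still form a valid curve contact representation of the induced subgraph. Combined with \Cref{csg-ind-deg-4}, this means that every nonnull induced subgraph of $G$ contains a vertex whose neighborhood is the union of four cliques. Iteratively peeling off such vertices produces an ordering $v_1, v_2, \ldots, v_n$ of $V(G)$ such that, writing $G_i = G[\{v_1, \ldots, v_i\}]$, the neighborhood $N_{G_i}(v_i)$ is the union of at most four cliques $C_1^i, C_2^i, C_3^i, C_4^i$ of $G_i$.

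Second, I would color the vertices greedily in the order $v_1, v_2, \ldots, v_n$, always using the smallest available color. When we reach $v_i$, the already-colored neighbors are exactly the vertices of $N_{G_i}(v_i) = C_1^i \cup C_2^i \cup C_3^i \cup C_4^i$. The key observation is that, since $C_j^i \subseteq N_{G_i}(v_i)$ and $C_j^i$ is a clique in $G_i$, the set $\{v_i\} \cup C_j^i$ is itself a clique in $G$, and hence $|C_j^i| \leq \omega(G) - 1$. Each clique $C_j^i$ contributes at most $|C_j^i|$ distinct colors, so the number of distinct colors appearing in the neighborhood of $v_i$ is at most $\sum_{j=1}^{4} |C_j^i| \leq 4(\omega(G)-1) < 4\omega(G)$. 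Consequently, some color in $\{1, \ldots, 4\omega(G)\}$ is always free for $v_i$, and we conclude that $\chi(G) \leq 4\omega(G)$.

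I do not expect a genuine obstacle here; the bound is essentially immediate from \Cref{csg-ind-deg-4} together with the fact that the class is hereditary. The only subtle point is the use of $|C_j^i| \leq \omega(G) - 1$ instead of the naive $|C_j^i| \leq \omega(G)$, which is what allows the final bound to be $4\omega(G)$ rather than $4\omega(G)+1$.
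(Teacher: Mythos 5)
Your proof is correct and follows essentially the same route as the paper: peel off vertices whose neighborhoods are unions of at most four cliques (using \Cref{csg-ind-deg-4} plus heredity of the class) and greedily color in the reverse order; your extra remark that each clique together with the peeled vertex is a clique of $G$, giving the bound $4(\omega(G)-1)$ on forbidden colors, just makes explicit why the count stays within $4\omega(G)$.
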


\section{Bounded layered tree-independence number and sublinear-weight clique-based separators}\label{s:towardsquestion}

In this section we collect some evidence toward a positive answer to \Cref{questionltreealpha}. First, we observe that \Cref{questionltreealpha} has a positive answer for every class of bounded layered tree-independence number admitting a subquadratic $\theta$-binding function. Then, we argue that the latter property is easily implied by boundedness of clique cover degeneracy.

\begin{proposition}\label{cliquebased_layeredtreealpha} Let $\mathcal{G}$ be a class of graphs with layered tree-independence number at most $k$ and $\theta$-binding function $f$. Every $n$-vertex graph $G\in \mathcal{G}$ admits clique-based separators of size at most $f(2\sqrt{kn})$.
\end{proposition}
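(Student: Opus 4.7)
The plan is to combine \Cref{treealphabound} with the standard fact that a tree decomposition yields a balanced separator supported on a single bag, and then use the $\theta$-binding function to cover that separator by few cliques.

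First, I would apply \Cref{treealphabound} to the $n$-vertex graph $G \in \mathcal{G}$: from the tree decomposition and layering witnessing layered tree-independence number at most $k$, we obtain a tree decomposition $\mathcal{T}' = (T',\{X_t\}_{t \in V(T')})$ of $G$ with $\alpha(G[X_t]) \leq 2\sqrt{kn}$ for every $t \in V(T')$.

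Second, I would invoke the classical balanced-separator lemma for tree decompositions (see, e.g., the argument underlying \cite[Theorem~20]{Bod98}): there exists a node $t^* \in V(T')$ such that $S := X_{t^*}$ is a balanced separator of $G$, in the sense that every connected component of $G - S$ has at most $\lceil |V(G)|/2\rceil$ vertices. (In fact, the usual proof orients each edge of $T'$ toward the ``heavier'' side and takes $t^*$ to be a sink.) This gives a separation $(A,B)$ of $G$ with $A \cap B = S$ and $\max\{|A \setminus B|, |B \setminus A|\} \leq \lceil n/2 \rceil$, so in particular $S$ is a balanced separator in the sense required by the definition of a clique-based separator.

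Third, since $G \in \mathcal{G}$ and $G[S]$ is an induced subgraph of $G$, the $\theta$-binding function $f$ yields
\[
\theta(G[S]) \leq f(\alpha(G[S])) \leq f(2\sqrt{kn}),
\]
because $f$ is non-decreasing (as a $\theta$-binding function may be assumed to be). Hence $G[S]$ decomposes into a collection $\mathcal{S}$ of at most $f(2\sqrt{kn})$ vertex-disjoint cliques whose union is the balanced separator $S$, which is exactly a clique-based separator of the claimed size.

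There is no real obstacle here; the only point worth double-checking is that the balanced-separator property of a single bag can be arranged in the precise $\beta$-balanced form used to define clique-based separators, but this is standard and already implicit in the proof of \Cref{cliquesep_totreealpha}.
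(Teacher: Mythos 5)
Your proposal is correct and follows essentially the same route as the paper: apply \Cref{treealphabound} to obtain a tree decomposition with independence number at most $2\sqrt{kn}$, pick a bag that is a balanced separator (the paper cites \cite[Lemma~7.20]{CFK} for this standard fact), and cover that bag with at most $f(2\sqrt{kn})$ cliques via the $\theta$-binding function. The only cosmetic difference is your explicit remark that $f$ may be taken non-decreasing, which the paper leaves implicit.
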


\begin{proof} Let $G \in \mathcal{G}$ be an arbitrary $n$-vertex graph. Since $G$ has layered tree-independence number at most $k$, it admits a tree decomposition $\mathcal{T} = (T, \{X_t\}_{t \in V(T)})$ with independence number at most $2\sqrt{kn}$, thanks to \Cref{treealphabound}. But there exists a balanced separation $(A, B)$ in $G$ such that $A \cap B = X_t$, for some $t \in V(T)$ (see, e.g., \cite[Lemma~7.20]{CFK}), and so $X_t$ is a balanced separator of $G$ with $\alpha(G[X_t]) \leq 2\sqrt{kn}$. Since $\mathcal{G}$ has $\theta$-binding function $f$, we obtain that the balanced separator $X_t$ can be covered with $\theta(G[X_t]) \leq f(2\sqrt{kn})$ cliques, as claimed.   
\end{proof}

\begin{lemma}[Folklore]\label{thetab_cliquecoverdeg}
Let $k \in \mathbb{N}$ and let $\mathcal{G}$ be a class of graphs with clique cover degeneracy at most $k$. The class $\mathcal{G}$ is $\theta$-bounded with $\theta$-binding function $f(x) = kx$.
\end{lemma}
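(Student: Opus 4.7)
The plan is to prove the bound $\theta(H)\le k\alpha(H)$ for every induced subgraph $H$ of a graph $G\in\mathcal{G}$ by induction on $|V(H)|$, using a standard greedy peeling argument driven by the clique cover degeneracy hypothesis.

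\textbf{Base case and inductive step.} When $H$ is the null graph, the inequality is trivial. For the inductive step, I would apply the clique cover degeneracy assumption to $H$ itself (noting that the hypothesis $\dg_\theta(G)\le k$ transfers to every induced subgraph of $G$, by definition of clique cover degeneracy): there exists a vertex $v\in V(H)$ such that $H[N_H(v)]$ admits a partition into at most $k$ cliques $C_1,\ldots,C_k$. Adding $v$ to $C_1$ (say) yields a partition of $N_H[v]$ into at most $k$ cliques of $H$.

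\textbf{Combining with induction.} Consider the induced subgraph $H' = H - N_H[v]$, which is again an induced subgraph of $G$ and strictly smaller than $H$. By the inductive hypothesis, $\theta(H')\le k\alpha(H')$. Concatenating a minimum clique cover of $H'$ with the $k$ cliques covering $N_H[v]$ produces a clique cover of $H$ of size at most $\theta(H') + k$, so
\[
\theta(H)\le \theta(H') + k \le k\alpha(H') + k.
\]
The key remaining observation is that $\alpha(H') \le \alpha(H) - 1$: any independent set $I$ in $H'$ satisfies $I\cup\{v\}\subseteq V(H)$ and remains independent in $H$, because $v$ has no neighbors in $V(H')=V(H)\setminus N_H[v]$. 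Substituting yields $\theta(H)\le k(\alpha(H)-1)+k=k\alpha(H)$, completing the induction.

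\textbf{Main obstacle.} There is really no obstacle here; this is a textbook greedy argument (which is why the statement is labeled folklore). The only subtlety is to make sure that the clique cover degeneracy bound is applied to $H$ and to $H'$ rather than to $G$ itself, which is justified because clique cover degeneracy is defined as a maximum over all nonnull induced subgraphs, hence is inherited by induced subgraphs. The resulting $\theta$-binding function is $f(x)=kx$, as claimed.
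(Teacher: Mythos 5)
Your proof is correct and follows essentially the same argument as the paper: both identify a vertex $v$ whose neighborhood is covered by at most $k$ cliques, cover $N_H[v]$ with at most $k$ cliques, and use $\alpha(H - N_H[v]) \le \alpha(H) - 1$ to close the bound; the paper phrases this as a minimum-counterexample argument rather than explicit induction, which is an inessential difference.
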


\begin{proof} Fix an arbitrary $G \in \mathcal{G}$. We aim to show that, for every induced subgraph $H$ of $G$, it holds that $\theta(H) \leq k\alpha(H)$. Suppose, to the contrary, that $H$ is a counterexample with the minimum number of vertices. There exists a vertex $v \in V(H)$ such that $N_H(v)$ can be covered with at most $k$ cliques. But then $\theta(H) \leq \theta(H - N_H[v]) + k \leq k\alpha(H - N_H[v]) + k \leq k\alpha(H)$, a contradiction.   
\end{proof}

Applying \Cref{cliquebased_layeredtreealpha} and \Cref{thetab_cliquecoverdeg} gives the following.

\begin{corollary}\label{cliquebasedcor} The following hold:
\begin{itemize}
    \item[(a)] $0$-map graphs admit clique-based separators of size $\mathcal{O}(\sqrt{n})$; 
    \item[(b)] Unit disk graphs admit clique-based separators of size $\mathcal{O}(\sqrt{n})$;
    \item[(c)] Hyperbolic uniform disk graphs with radius $r$ admit clique-based separators of size $\mathcal{O}(\sqrt{\frac{r}{\tanh r}}\cdot \sqrt{n})$;
    \item[(d)] Spherical uniform disk graphs with radius $r$ admit clique-based separators of size $\mathcal{O}(\sqrt{n})$. 
\end{itemize}    
\end{corollary}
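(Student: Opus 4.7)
The plan is to deduce \Cref{cliquebasedcor} essentially as a direct corollary of \Cref{cliquebased_layeredtreealpha} and \Cref{thetab_cliquecoverdeg}, plugging in the bounds on layered tree-independence number and clique cover degeneracy proved earlier in the paper. In each case the recipe is the same: identify a constant $k$ (or a function of $r$) bounding the layered tree-independence number of the class, identify a constant bound on the clique cover degeneracy, and feed both into the two results. By \Cref{thetab_cliquecoverdeg}, a class with clique cover degeneracy at most $c$ is $\theta$-bounded with $\theta$-binding function $f(x)=cx$, which is linear and in particular subquadratic. By \Cref{cliquebased_layeredtreealpha}, an $n$-vertex graph in such a class with layered tree-independence number at most $k$ admits a clique-based separator of size at most $f\!\left(2\sqrt{kn}\right) = 2c\sqrt{kn}$.

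For part (a), $0$-map graphs have layered tree-independence number at most $9$ by \Cref{maplayered} (with $g=0$) and clique cover degeneracy at most $3$ by \Cref{0-map-theta-degen}, yielding size $\mathcal{O}(\sqrt{n})$. For part (b), unit disk graphs are a subclass of hyperbolic uniform disk graphs, which have clique cover degeneracy at most $3$ by Bl\"asius et al.~\cite{BvdH24}, and layered tree-independence number $\mathcal{O}(1)$ (as recalled in \Cref{summarytable}), again giving $\mathcal{O}(\sqrt{n})$. For part (c), \Cref{layeredhyperbolic} gives layered tree-independence number $\mathcal{O}(\tfrac{r}{\tanh r})$ and the clique cover degeneracy bound of $3$ from~\cite{BvdH24} still applies, so we obtain size $\mathcal{O}\!\bigl(\sqrt{\tfrac{r}{\tanh r}}\cdot\sqrt{n}\bigr)$. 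For part (d), \Cref{sphericallayered} gives layered tree-independence number at most $30$ and \Cref{sphericalcliquecover} gives clique cover degeneracy at most $6$, so we again get $\mathcal{O}(\sqrt{n})$.

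There is essentially no obstacle here, as all the hard work has already been done: the bounds on layered tree-independence number come from Section~3, the bounds on clique cover degeneracy come from Section~4, and the glue is precisely \Cref{cliquebased_layeredtreealpha} together with \Cref{thetab_cliquecoverdeg}. The only thing to double-check is that in each case the ``class'' under consideration is genuinely hereditary and that the bound on clique cover degeneracy applies to all induced subgraphs, so that \Cref{thetab_cliquecoverdeg} can be invoked; this is immediate for all four classes, since a subgraph induced by a subset of the disks (or nations) is again a graph of the same type. One could also state the result more uniformly by defining, for a class $\mathcal{G}$ of layered tree-independence number at most $k(n)$ and clique cover degeneracy at most $c$, a single bound of the form $2c\sqrt{k(n)\,n}$ on the size of a clique-based separator, and then read off (a)--(d) by substitution.
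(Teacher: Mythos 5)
Your proof is correct and follows essentially the same route as the paper: apply \Cref{cliquebased_layeredtreealpha} together with \Cref{thetab_cliquecoverdeg}, plugging in the layered tree-independence number bounds from Section~3 and the clique cover degeneracy bounds from Section~4. The only cosmetic difference is in part (b), where the paper cites the clique cover degeneracy bound of $3$ for unit disk graphs directly from Peeters and the layered tree-independence number bound of $3$ from Galby et al., whereas you go via the inclusion $\text{UDG} \subseteq \text{HUDG}$; both give the same conclusion.
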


\begin{proof} All results are obtained by applying \Cref{cliquebased_layeredtreealpha} and \Cref{thetab_cliquecoverdeg}, and recalling the following bounds for the clique cover degeneracy and layered tree-independence number of the corresponding graph classes. $0$-map graphs have clique cover degeneracy at most $3$ (\Cref{0-map-theta-degen}) and layered tree-independence number at most $9$ (\Cref{maplayered}). Unit disk graphs have clique cover degeneracy at most $3$ \cite{Pee91} and layered tree-independence number at most $3$ \cite{GMY24}. Hyperbolic uniform disk graphs with radius $r$ have clique cover degeneracy at most $3$ \cite{BvdH24} and layered tree-independence number $\mathcal{O}(\frac{r}{\tanh r})$ (\Cref{layeredhyperbolic}). Spherical uniform disk graphs with radius $r$ have clique cover degeneracy at most $6$ (\Cref{sphericalcliquecover}) and layered tree-independence number at most $30$ (\Cref{sphericallayered}).  
\end{proof}

A remark about \Cref{cliquebasedcor} is in place. The bounds for $0$-map graphs and unit disk graphs are tight, as can be easily seen by considering the class of $\sqrt{n}\times \sqrt{n}$ grids. 
These bounds were first proven in \cite{dBKMT23} and \cite{BBK20}, respectively, although our proofs are arguably simpler. Recalling that $\text{UDG} \subseteq \text{HUDG}$ and $\text{UDG} \subseteq \text{SUDG}$, we also observe that the bounds for  hyperbolic and spherical uniform disk graphs are tight for some values of $r$. The significance of \Cref{cliquebasedcor} is in showing that layered tree-independence number arguments could be a useful tool in addressing clique-based separators.    

We finally show a general result encompassing the graph classes mentioned above.

\begin{proposition}\label{layeredtreealpha-cbs} The following (incomparable) graph classes of bounded layered tree-independence number admit clique-based separators of sublinear weight:    
\begin{itemize}
    \item[(a)] Every class of bounded layered tree-independence number with a subquadratic $\theta$-binding function;
    \item[(b)] Every class of bounded layered treewidth;
    \item[(c)] Every class of bounded tree-independence number.
\end{itemize}
\end{proposition}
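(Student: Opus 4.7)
I would treat the three items separately, all via the same underlying principle: extract a bag of a good tree decomposition to serve as a balanced separator, then cover it by cliques of small total logarithmic weight. Items (a) and (b) can be dispatched using existing machinery, while item (c) requires a Ramsey-based clique partitioning argument.

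For (a), I would apply \Cref{cliquebased_layeredtreealpha} directly: under the stated hypotheses (layered tree-independence number at most a constant $k$, subquadratic $\theta$-binding function $f$, say $f(x) = \mathcal{O}(x^{2-\varepsilon})$ for some fixed $\varepsilon > 0$), every $n$-vertex graph in the class admits a clique-based separator of size $f(2\sqrt{kn}) = \mathcal{O}(n^{1-\varepsilon/2})$, hence of weight $\mathcal{O}(n^{1-\varepsilon/2} \log n) = o(n)$. For (b), I would invoke the classical treewidth analogue of \Cref{treealphabound} due to Dujmovi{\'c}, Morin, and Wood~\cite{DMW17}, according to which bounded layered treewidth implies treewidth $\mathcal{O}(\sqrt{n})$; hence such graphs admit balanced vertex separators of size $\mathcal{O}(\sqrt{n})$, and treating each separator vertex as a singleton clique yields a clique-based separator of weight $\mathcal{O}(\sqrt{n})$.

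For (c), I would first extract, from a tree decomposition of $G$ of independence number at most the (constant) bound $k$ on $\tin(G)$, a bag $S$ that is a balanced separator with $\alpha(G[S]) \leq k$, by the bag-selection argument used in the proof of \Cref{cliquebased_layeredtreealpha}. The task is then to partition $G[S]$ into cliques of total weight $o(n)$, which I would do greedily: by Ramsey's theorem in the form $R(s, k+1) = \mathcal{O}(s^k)$ (with implicit constant depending on $k$), any induced subgraph of $G[S]$ on $m$ vertices contains a clique of size $\Omega(m^{1/k})$, so repeatedly extracting a largest clique leaves a residual set satisfying $m_{i+1} \leq m_i - c_k\, m_i^{1/k}$ for a positive constant $c_k$. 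The main technical obstacle lies in bookkeeping the cumulative weight: solving this recurrence shows that the partition uses $\mathcal{O}(|S|^{1-1/k})$ cliques, each contributing $\mathcal{O}(\log n)$ to the weight, for a total of $\mathcal{O}(n^{1-1/k}\log n) = o(n)$ whenever $k$ is a fixed constant; the case $k=1$ is immediate since $G[S]$ is then already a single clique of weight $\mathcal{O}(\log n)$.
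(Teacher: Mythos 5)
Your proposal is correct, and for items (a) and (b) it is essentially the paper's proof: (a) is a direct application of \Cref{cliquebased_layeredtreealpha} (your explicit reading of ``subquadratic'' as $f(x)=\mathcal{O}(x^{2-\varepsilon})$ is in fact the right one, since it is what absorbs the extra $\log n$ coming from clique weights; with the weaker reading $f(x)=o(x^2)$ one only directly gets weight $o(n\log n)$), and (b) uses the same $\mathcal{O}(\sqrt{n})$-separator fact from Dujmovi\'c--Morin--Wood with singleton cliques. Where you genuinely diverge is item (c). The paper also extracts a balanced-separator bag $S$ with $\alpha(G[S])\le r$, but then covers it by cliques via an argument (attributed to Buci\'c) that takes a maximal collection of pairwise-disjoint cliques each of the fixed size $\lceil n^{1/2r}-r\rceil$ and uses Ramsey once to argue that at most $\sqrt{n}$ vertices remain uncovered, giving $\theta(G[S])\le \frac{n}{n^{1/2r}-r}+\sqrt{n}$; you instead greedily remove a largest clique at each step and solve the recurrence $m_{i+1}\le m_i-c_k m_i^{1/k}$, which (handling the regime of constant-size remainders with singletons) indeed yields $\mathcal{O}_k(|S|^{1-1/k})$ cliques. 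Both arguments rest on the same Ramsey bound $R(s,k+1)\le (s+k)^k$, and both give sublinear weight; your greedy variant even gives a slightly better exponent ($n^{1-1/k}$ versus the paper's $n^{1-1/(2k)}$), at the cost of the recurrence bookkeeping, while the paper's fixed-size packing avoids any recurrence and gives the clean closed-form bound on $\theta$.
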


\begin{proof} (a) immediately follows from \Cref{cliquebased_layeredtreealpha}. (b) follows from the fact that a class of bounded layered treewidth has balanced separators of size $\mathcal{O}(\sqrt{n})$ \cite{DMW17}. 

Finally, consider (c). Let $r \in \mathbb{N}$ be a constant and let $\mathcal{G}$ be a graph class such that $\tin(G) \leq r$ for every $G \in \mathcal{G}$. Let $G \in \mathcal{G}$ and let $\mathcal{T} = (T, \{X_t\}_{t\in V(T)})$ be a tree decomposition of $G$ with independence number at most $r$. We know that there exists a balanced separation $(A, B)$ in $G$ such that $A \cap B = X_t$, for some bag $X_t$ of $T$. It is then enough to show that every $n$-vertex graph $H$ with $\alpha(H) \leq r$ satisfies $\theta(H) \leq \frac{n}{n^{1/2r}- r}+\sqrt{n}$. The following argument is due to Buci\'c (personal communication, 2024).

It is well known that $R(p,q) \le \binom{p+q-2}{p-1}$ for all positive integers $p$ and $q$. Hence, $R(s, r+1) \leq \binom{s+r-1}{s-1} = \binom{s+r-1}{r} \leq \frac{(s+r-1)^r}{r!} \leq (s+r)^r$, and so every $n$-vertex graph with no independent set of size $r+1$ contains a clique of size at least $n^{1/r} - r$. Now, take a maximal collection $\mathcal{C}$ of pairwise-disjoint cliques in $H$ each of size $\lceil n^{1/2r}-r\rceil$. These cliques cover all vertices of $H$ except for at most $n^{1/2}$ vertices. Indeed, if more than $n^{1/2}$ vertices were left uncovered, Ramsey's theorem applied to the subgraph induced by such vertices gives a clique of size at least $n^{1/2r}-r$, contradicting the maximality of $\mathcal{C}$. We conclude that $\theta(H) \leq \frac{n}{n^{1/2r}- r}+\sqrt{n}$.
\end{proof}

\section{Bounded independence degeneracy}\label{sec:indep_deg}

In this section we consider \Cref{fractional tin-fragility bounded independence degeneracy}, which we restate for convenience. 

\conj*

We first provide some positive evidence by showing that the following fractionally $\tin$-fragile classes have bounded independence degeneracy: every class with bounded layered tree-independence number and every class of intersection graphs of fat objects in $\mathbb{R}^d$, for fixed $d$. 

\begin{lemma}\label{bounded layered tree-alpha bounded alpha-degeneracy}
    For every integer $k$, the class of graphs with layered tree-independence number at most~$k$ has independence degeneracy at most $3k$.
\end{lemma}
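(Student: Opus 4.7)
The plan is to argue the hereditary property first and then exhibit, in any graph $G$ with layered tree-independence number at most $k$, a vertex whose neighborhood is confined to (at most) three consecutive layers inside a single bag. Since the restriction of a tree decomposition and a layering of $G$ to any subset $U\subseteq V(G)$ yields a tree decomposition and a layering of $G[U]$ with no larger layered independence number, it is enough to show that every graph $G$ with layered tree-independence number at most $k$ contains a vertex $v$ with $\alpha(G[N_G(v)])\le 3k$.

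So fix a tree decomposition $\mathcal{T}=(T,\{X_t\}_{t\in V(T)})$ and a layering $(V_0,V_1,\ldots)$ of $G$ witnessing layered tree-independence number at most $k$. Without loss of generality $\mathcal{T}$ is reduced, meaning no bag is contained in a neighboring bag; otherwise contract such an edge, which only improves the decomposition. The key step is to find a bag $X_t$ and a vertex $v\in X_t$ whose every neighbor lies in $X_t$. If $|V(T)|=1$ any vertex works since $V(G)=X_t$. Otherwise take a leaf $t$ of $T$ with unique neighbor $t'$; by reducedness there exists $v\in X_t\setminus X_{t'}$, and property (T3) of tree decompositions forces $v$ to belong to no bag other than $X_t$, so (T2) gives $N_G(v)\subseteq X_t$.

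Now let $i$ be the (unique) index with $v\in V_i$. The layering property implies $N_G(v)\subseteq V_{i-1}\cup V_i\cup V_{i+1}$, and combined with $N_G(v)\subseteq X_t$ we get
\[
N_G(v)\;\subseteq\;\bigcup_{j\in\{i-1,i,i+1\}}(X_t\cap V_j).
\]
Subadditivity of $\alpha$ across a union yields
\[
\alpha\bigl(G[N_G(v)]\bigr)\;\le\;\sum_{j\in\{i-1,i,i+1\}}\alpha\bigl(G[X_t\cap V_j]\bigr)\;\le\;3k,
\]
finishing the proof.

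The only point requiring care is the existence of the vertex $v\in X_t$ with $N_G(v)\subseteq X_t$, and the reduced-leaf argument above handles this cleanly; there is no real obstacle otherwise. Note that the same argument would give a bound of $2k$ if one could ensure $v$ is in an extremal layer (so that one of $V_{i-1}$ or $V_{i+1}$ is empty near $v$), but this need not be arrangeable simultaneously with the leaf-bag choice, which is why $3k$ is the natural bound from this approach.
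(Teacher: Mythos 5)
Your proof is correct and follows essentially the same route as the paper: locate a vertex whose neighborhood is contained in a single bag, then split that neighborhood across the (at most) three layers it can meet, each contributing independence number at most $k$. The only difference is that the paper cites a lemma of Dallard et al.\ for the existence of such a vertex, whereas you derive it yourself via a reduced decomposition and a leaf bag (and you make the heredity step explicit, which the paper leaves implicit); both are fine.
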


\begin{proof}
Let $G$ be a graph with layered tree-independence number at most $k$.
Fix a tree decomposition $\mathcal{T} = (T,\{X_t\}_{t \in V(T)})$ of $G$ and a layering $(V_0, V_1, \dots)$ of $V(G)$ such that, for each bag $X_t$ and each layer $V_i$, it holds $\alpha(X_t \cap V_i) \leq k$.
By \cite[Lemma~2.6]{DMS24}, there exist vertices $u \in V(G)$ and $t \in V(T)$ such that $N[u] \subseteq X_t$.
Let $i\ge 0$ be an integer such that $u \in V_i$.
Note that $N[u] \subseteq V_{i-1} \cup V_i \cup V_{i+1}$ (with $V_{-1} = \varnothing$ if $i= 0$).
Therefore, $\alpha(N[u]) \leq \alpha(N[u] \cap V_{i-1}) + \alpha(N[u] \cap V_{i}) + \alpha(N[u] \cap V_{i+1}) \leq \alpha(X_t \cap V_{i-1}) + \alpha(X_t \cap V_{i}) + \alpha(X_t \cap V_{i+1}) \leq 3k$. We infer that the class of graphs with layered tree-independence number at most $k$ has independence degeneracy at most $3k$.
\end{proof}

\begin{remark}\label{cliquecoverdegpower} An analogous argument shows that if a graph $G$ admits a tree decomposition and a layering such that each intersection between a bag and a layer has clique cover number at most $k$, then $G$ has clique cover degeneracy at most $3k$. Combining this with \Cref{layeredcliquepower} gives that powers of graphs with bounded layered treewidth have bounded clique cover degeneracy.
\end{remark}

Let $d \geq 2$ be an arbitrary but fixed integer. An \textit{object} in $\mathbb{R}^d$ is a path-connected compact set $O \subset \mathbb{R}^d$. The \textit{size} of an object $O$ in $\mathbb{R}^d$, denoted $s(O)$, is the side length of its smallest enclosing axis-aligned hypercube. Let $c\in \mathbb{R}$ be a positive constant. A collection $\mathcal{O}$ of objects in $\mathbb{R}^d$ is $c$-$\textit{fat}$ if, for every $r\in \mathbb{R}$ and every axis-aligned closed hypercube $R$ of side length $r$, there exist at most $c$ pairwise non-intersecting objects from $\mathcal{O}$ of size at least $r$ and which intersect $R$. It is known that every class of intersection graphs of a $c$-fat collection of objects in $\mathbb{R}^d$, for fixed $d$, is fractionally $\tin$-fragile \cite{GMY23}.

\begin{lemma}\label{c-fat bounded alpha-degeneracy}
    For every $c>0$, the class of intersection graphs of a $c$-fat collection of objects in $\mathbb{R}^d$ has independence degeneracy at most $c$.
\end{lemma}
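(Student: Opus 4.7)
The plan is to exploit $c$-fatness in the most direct way possible: in any induced subgraph, single out an object of smallest size and show that the independence number of its neighborhood is directly controlled by the $c$-fat condition applied to its bounding hypercube.

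More precisely, let $G$ be the intersection graph of a $c$-fat collection $\mathcal{O}$ of objects in $\mathbb{R}^d$, and let $H$ be an arbitrary nonnull induced subgraph of $G$. The vertices of $H$ correspond to a subcollection $\mathcal{O}_H \subseteq \mathcal{O}$, and since being $c$-fat is hereditary with respect to taking subcollections (the defining inequality only becomes easier to satisfy when we drop objects), $\mathcal{O}_H$ is itself $c$-fat. It therefore suffices to exhibit a single vertex $v^* \in V(H)$ whose neighborhood in $H$ has independence number at most $c$.

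The step I would take next is to pick $O^* \in \mathcal{O}_H$ of minimum size $r := s(O^*)$, let $v^*$ be the corresponding vertex of $H$, and let $R$ be the smallest enclosing axis-aligned hypercube of $O^*$, which has side length exactly $r$ and satisfies $O^* \subseteq R$. Consider any independent set $I \subseteq N_H(v^*)$ in $H$. The objects of $\mathcal{O}_H$ corresponding to vertices of $I$ are pairwise non-intersecting (since $I$ is independent in $H$), each of them intersects $O^*$ (since each vertex of $I$ is adjacent to $v^*$ in $H$) and hence intersects $R \supseteq O^*$, and each of them has size at least $r$ by the minimality of $r$. The $c$-fatness of $\mathcal{O}_H$ applied to the hypercube $R$ of side length $r$ now yields $|I| \le c$, hence $\alpha(H[N_H(v^*)]) \le c$.

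Since $H$ was an arbitrary nonnull induced subgraph of $G$, we conclude that $G$ has independence degeneracy at most $c$, as claimed. No part of the argument is genuinely subtle: the only thing to keep in mind is that the $c$-fat property must be invoked with the bounding hypercube of the chosen smallest object (whose side length matches the lower bound $r$ on sizes of the remaining objects), which is exactly the scale that makes the fatness condition applicable.
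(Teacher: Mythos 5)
Your proof is correct and follows essentially the same argument as the paper: pick the vertex corresponding to a minimum-size object, enclose it in its bounding hypercube of matching side length, and invoke $c$-fatness to bound the independence number of its neighborhood. The only difference is that you spell out explicitly the (true and routine) hereditary step—that subcollections of a $c$-fat collection are $c$-fat, so the argument applies to every induced subgraph—which the paper leaves implicit.
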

\begin{proof}
Let $\mathcal{O}$ be a $c$-fat collection of objects in $\mathbb{R}^d$ and let $G$ be its intersection graph.
Take a vertex $u \in V(G)$ corresponding to an object $O_u \in \mathcal{O}$ with minimum size and denote by $r$ the size of $O_u$. Fix a closed axis-aligned hypercube $R$ of side length $r$ that entirely contains $O_u$.
Observe that, by the choice of $u$, every neighbor $v$ of $u$ corresponds to an object $O_v$ of size at least $r$ that intersects $R$.
Therefore, since $\mathcal{O}$ is $c$-fat, the number of pairwise non-intersecting objects that intersect $R$ is at most~$c$.
We deduce that $u$ has at most $c$ pairwise non-adjacent neighbors, and this implies that $G$ has independence degeneracy at most~$c$.
\end{proof}

We now prove a relaxation of \Cref{fractional tin-fragility bounded independence degeneracy}, namely that every fractionally $\tin$-fragile class is  polynomially $(\dg,\omega)$-bounded. Let us first observe the following.

\begin{lemma}\label{alpha-degeneracy dg-omega-boundedness}
    For every integer $k$, the class of graphs with independence degeneracy at most $k$ is polynomially $(\dg,\omega)$-bounded.
\end{lemma}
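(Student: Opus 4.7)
The plan is to reduce the claim to a direct application of the classical Ramsey bound $R(s,k+1) \leq \binom{s+k-1}{k}$, in the quantitative form already used in the proof of \Cref{layeredtreealpha-cbs}(c): every $n$-vertex graph with independence number at most $k$ contains a clique of size at least $n^{1/k} - k$. The key observation is that having independence degeneracy at most $k$ is a hereditary property, so we can apply the definition to every induced subgraph of the graph we want to bound.

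Fix an integer $k$ and let $\mathcal{G}$ be the class of graphs with independence degeneracy at most $k$. Let $G \in \mathcal{G}$ and let $H$ be an arbitrary induced subgraph of $G$. To bound $\dg(H)$ in terms of $\omega(H)$, it suffices to show that every nonnull induced subgraph $H'$ of $H$ contains a vertex of small degree. Since $H'$ is also an induced subgraph of $G$ and $G \in \mathcal{G}$, there exists a vertex $v \in V(H')$ such that $\alpha(H'[N_{H'}(v)]) \leq k$. The Ramsey bound recalled above, applied to the graph $H'[N_{H'}(v)]$ on $\deg_{H'}(v)$ vertices, yields a clique of size at least $\deg_{H'}(v)^{1/k} - k$ inside $N_{H'}(v)$. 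Adding $v$ to this clique produces a clique of $H'$, hence of $H$, so
\[
\omega(H) \;\geq\; \omega(H') \;\geq\; \deg_{H'}(v)^{1/k} - k + 1.
\]

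Rearranging gives $\deg_{H'}(v) \leq (\omega(H) + k - 1)^{k}$. Since $H'$ was an arbitrary nonnull induced subgraph of $H$, we conclude that $\dg(H) \leq (\omega(H) + k - 1)^{k}$, which is a polynomial function of $\omega(H)$ of degree $k$. This establishes polynomial $(\dg,\omega)$-boundedness of $\mathcal{G}$ with the polynomial $f(x) = (x + k - 1)^{k}$.

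No step poses a real obstacle: the whole argument is essentially a bookkeeping combination of heredity of independence degeneracy and the off-diagonal Ramsey bound already invoked in the paper. The only minor point to be careful about is to apply the definition of independence degeneracy to an \emph{arbitrary} induced subgraph $H'$ of $H$ (not just to $H$ itself), so that the vertex of low-degree-in-$H'$ is guaranteed to exist, which is what one needs to bound $\dg(H) = \max_{H' \subseteq H} \delta(H')$.
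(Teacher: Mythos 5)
Your proof is correct and follows essentially the same route as the paper: heredity of independence degeneracy supplies, in every nonnull induced subgraph, a vertex whose neighborhood has independence number at most $k$, and the off-diagonal Ramsey bound then caps that vertex's degree by a polynomial of degree $k$ in the clique number. The only difference is presentational — you invoke the quantitative corollary (``$n$ vertices with $\alpha\le k$ force a clique of size about $n^{1/k}-k$'') and solve for the degree, obtaining $f(x)=(x+k-1)^k$, whereas the paper bounds the degree directly by $R(\omega(G),k+1)-1\le\binom{\omega(G)+k-1}{k}$; both yield degree-$k$ polynomials.
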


\begin{proof}
The lemma follows from Ramsey's theorem.
Fix an integer $k\ge 0$ and let $f_k$ be a function defined on the set of nonnegative integers by the rule  $f_k(t) = R(t,k+1)-1$. Recall that $R(p,q) \le \binom{p+q-2}{p-1}$ for all positive integers $p$ and $q$.
Hence, $f_k(t)\le \binom{t+k-1}{k}$ is bounded from above by a polynomial in $t$ of degree $k$.
Consequently, since the class of graphs with independence degeneracy at most $k$ is hereditary, it suffices to show that every graph $G$ with independence degeneracy at most $k$ satisfies $\dg(G)\le f_k(\omega(G))$.
Let $G$ be a graph with independence degeneracy at most $k$ and let $t = \omega(G)$. Note that every nonnull induced subgraph $H$ of $G$ has a vertex $v\in V(H)$ such that $\alpha(N_H(v))\le k$.
Since the clique number of the graph $H[N_H(v)]$ is at most $\omega(G)-1 = t-1$, Ramsey's theorem implies that $d_H(v)\le R(t,k+1)-1 = f_k(t)$. We have thus shown that every nonnull induced subgraph of $G$ has a vertex of degree at most $f_k(t)$, that is, the degeneracy of $G$ is at most $f_k(t)$.
\end{proof}

\begin{remark} \Cref{alpha-degeneracy dg-omega-boundedness} could be a useful tool in bounding the degeneracy of geometric intersection graphs of bounded ply. For example, Lokshtanov et al.~\cite{LPSXZ24} showed that every $K_r$-free pseudo-disk graph has $\mathcal{O}(r)$ degeneracy. Since pseudo-disk graphs have independence degeneracy at most $156$, as shown by Pinchasi~\cite{Pin14}, \Cref{alpha-degeneracy dg-omega-boundedness} implies that they have degeneracy bounded by a polynomial in $r$ of degree $156$. 
\end{remark}

Dvo\v{r}{\'{a}}k~\cite{Dvo16} showed that fractional $\tw$-fragility implies bounded expansion and hence bounded degeneracy. We prove in \Cref{bounded tw-fragility bounded degeneracy-new} below a slightly better bound for the degeneracy than the one that can be obtained from \cite[Lemma~13]{Dvo16}. For the proof, recall that treewidth, average degree (denoted by $\ad(G)$), and degeneracy are related as follows (see, e.g.,~\cite{BWK06} and~\cite[Corollary 1]{CS05}, respectively).

\begin{lemma}\label{average degree and treewidth}
Every nonnull graph $G$ satisfies $\ad(G)\le 2\dg(G)$ and $\dg(G)\le \tw(G)$.
\end{lemma}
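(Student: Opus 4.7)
Both inequalities are classical and the plan is to prove each by a direct structural argument; I will treat them in turn.

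For $\ad(G)\le 2\dg(G)$, I would exploit the vertex ordering that the definition of degeneracy naturally produces. By definition, every nonnull induced subgraph of $G$ contains a vertex of degree at most $\dg(G)$. Iteratively peeling off such vertices produces an ordering $v_1, v_2, \ldots, v_n$ of $V(G)$ such that, for each $i$, the vertex $v_i$ has at most $\dg(G)$ neighbors among $\{v_1,\ldots,v_{i-1}\}$ (this is the standard ``degeneracy ordering''). Charging each edge to its later endpoint in this ordering then yields $|E(G)|\le \dg(G)\cdot |V(G)|$, from which $\ad(G) = 2|E(G)|/|V(G)| \le 2\dg(G)$.

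For $\dg(G)\le \tw(G)$, the plan is to prove that every nonnull graph $H$ has a vertex of degree at most $\tw(H)$, and then observe that since $\tw$ is hereditary with respect to induced subgraphs, this is enough (as $\dg$ is the maximum over induced subgraphs of the minimum degree). To exhibit such a vertex in $H$, take a tree decomposition $\mathcal{T} = (T,\{X_t\}_{t\in V(T)})$ of $H$ of minimum width. If $|V(T)|=1$ then $|V(H)|\le \tw(H)+1$ and the conclusion is immediate. Otherwise, pick a leaf $t^*$ of $T$ with neighbor $s$; by minimality we cannot have $X_{t^*}\subseteq X_s$ (else deleting $t^*$ would produce a valid decomposition of no larger width on fewer nodes), so there exists a vertex $v\in X_{t^*}\setminus X_s$. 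By condition (T3), $v$ lies in no bag other than $X_{t^*}$, and then by (T2) all of its neighbors lie in $X_{t^*}\setminus\{v\}$, giving $\deg_H(v)\le |X_{t^*}|-1\le \tw(H)$.

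Neither step is delicate; the only point that deserves care is the ``leaf'' argument in the second inequality, where one must invoke minimality of the decomposition to locate a vertex confined to a single leaf bag. Once this structural fact is in hand, both bounds follow without further computation.
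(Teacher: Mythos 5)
Your proposal is correct in substance, but note that the paper does not actually prove this lemma: it is stated as a known fact with pointers to the literature (\cite{BWK06} for $\ad(G)\le 2\dg(G)$ and \cite[Corollary 1]{CS05} for $\dg(G)\le \tw(G)$). So a self-contained proof like yours is a reasonable substitute rather than a reproduction of the paper's argument. Your first part is complete as written: peeling minimum-degree vertices gives an ordering in which every vertex has at most $\dg(G)$ earlier neighbours, charging each edge to its later endpoint gives $|E(G)|\le \dg(G)\,|V(G)|$, hence $\ad(G)\le 2\dg(G)$.

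The one imprecision is in the leaf argument for $\dg(G)\le\tw(G)$. Choosing a tree decomposition of minimum \emph{width} does not by itself rule out a leaf bag $X_{t^*}$ contained in its neighbour's bag $X_s$: deleting such a leaf reduces the number of nodes, not the width, so its existence does not contradict width-minimality (one can pad any optimal decomposition with redundant duplicate leaf bags). The fix is exactly what your parenthetical hints at: among all tree decompositions of width $\tw(H)$, take one with the fewest nodes, or repeatedly delete leaves whose bag is contained in the neighbouring bag until none remain. With that extremal choice the rest of your argument is sound: a vertex $v\in X_{t^*}\setminus X_s$ occurs in no bag other than $X_{t^*}$ by (T3), so by (T2) all its neighbours lie in $X_{t^*}$, giving $\deg_H(v)\le |X_{t^*}|-1\le\tw(H)$; since treewidth does not increase under taking induced subgraphs, this yields $\dg(G)\le\tw(G)$. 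This is a one-line repair, not a flaw in the overall strategy.
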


\begin{lemma}\label{bounded tw-fragility bounded degeneracy-new}
Let $r>2$, $k\ge 0$, and let $G$ be a graph having a $(1-1/r)$-general cover $\mathcal{C}$ such that $\tw(G[A]) \leq k$ for every $A \in \mathcal{C}$.
Then, the degeneracy of $G$ is at most $\frac{2rk}{r-2}$.
\end{lemma}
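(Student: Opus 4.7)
The plan is to prove the bound via a double-counting edge-counting argument over the cover, and then transfer the resulting average-degree bound to a degeneracy bound by exploiting that the hypothesis is hereditary.

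First, I would observe that the property ``$G$ admits a $(1-1/r)$-general cover with $\tw(G[A])\le k$ for all $A$ in the cover'' is hereditary. Indeed, given an induced subgraph $H = G[S]$, the multiset $\mathcal{C}' = \{A \cap S : A \in \mathcal{C}\}$ has the same cardinality $m := |\mathcal{C}|$, each vertex $v \in S$ still lies in at least $(1-1/r)m$ elements of $\mathcal{C}'$, and each $H[A\cap S]$ is an induced subgraph of $G[A]$, hence has treewidth at most $k$. Thus it suffices to bound the average degree of $G$ itself by $\tfrac{2rk}{r-2}$: applying the same bound to every induced subgraph $H$ will give $\delta(H) \le \ad(H) \le \tfrac{2rk}{r-2}$, and hence $\dg(G) \le \tfrac{2rk}{r-2}$.

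To bound $\ad(G)$, set $n = |V(G)|$ and $m = |\mathcal{C}|$. For each $v \in V(G)$, let $m_v$ be the number of $A \in \mathcal{C}$ (with multiplicity) containing $v$; by assumption $m_v \ge (1-1/r)m$. For each edge $uv \in E(G)$, inclusion--exclusion gives that the number $m_{uv}$ of $A \in \mathcal{C}$ containing both $u$ and $v$ satisfies
\[
m_{uv} \;\ge\; m_u + m_v - m \;\ge\; 2(1-1/r)m - m \;=\; (1-2/r)m.
\]
Since every graph of treewidth at most $k$ has at most $k$ times as many edges as vertices, I would then estimate, by double counting,
\[
(1-2/r)\, m\, |E(G)| \;\le\; \sum_{A \in \mathcal{C}} |E(G[A])| \;\le\; k \sum_{A \in \mathcal{C}} |A| \;=\; k\sum_{v \in V(G)} m_v \;\le\; k m n.
\]
Rearranging gives $|E(G)| \le \tfrac{rkn}{r-2}$, hence $\ad(G) = 2|E(G)|/n \le \tfrac{2rk}{r-2}$.

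Combining this with the hereditary observation from the first paragraph yields the required degeneracy bound. I do not foresee a genuine obstacle here: the calculation only requires $r > 2$ so that $1 - 2/r > 0$, which is exactly the hypothesis. The only subtlety is a careful check that the cover and the treewidth condition restrict correctly to induced subgraphs, and that the standard edge bound $|E(H)| \le k|V(H)|$ for graphs of treewidth at most $k$ holds for every size of $H$ (which it does, either via the chordal-completion bound $k|V(H)| - \binom{k+1}{2}$ when $|V(H)| \ge k+1$, or trivially when $|V(H)| \le k$).
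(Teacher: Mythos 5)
Your proof is correct and is essentially the same argument as the paper's: both rest on the observation that the cover restricts to induced subgraphs, an averaging/double-counting step showing that a $(1-2/r)$-fraction of the edge (or degree) mass survives in cover elements, and the sparsity of treewidth-$k$ graphs ($|E|\le k|V|$, equivalently $\ad \le 2\tw$). The only difference is presentational — you count edges deterministically and conclude via heredity and $\delta(H)\le \ad(H)$, whereas the paper phrases the same computation probabilistically (a uniformly random cover element) and argues by contradiction from a subgraph of large minimum degree.
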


\begin{proof}
Let $r$, $k$, $G$, and $\mathcal{C}$ be as in the statement of the theorem and suppose for a contradiction that the degeneracy of $G$ is more than $\frac{2rk}{r-2}$.
By the definition of degeneracy, $G$ contains an induced subgraph $H$ with minimum degree more than $\frac{2rk}{r-2}$. 
Note that $\mathcal{C}_H \coloneqq \{C\cap V(H)\colon C\in \mathcal{C}\}$ is a  $(1-1/r)$-general  cover of $H$ and for any $A \in \mathcal{C}_H$ with $A  = C\cap V(H)$ for some $C\in \mathcal{C}$, $\tw(H[A]) \le \tw(G[C]) \leq k$.

Pick $A \in \mathcal{C}_H$ uniformly at random. 
Since $\mathcal{C}_H$ is a $(1-1/r)$-general cover of $H$, we have $\mathbb{P}(v \not\in A)\le 1/r$ for every vertex $v\in V(H)$.
Furthermore, for each $v\in V(H)$ and each $u \in N_H(v)$, it holds that
\[
\mathbb{P}\left(\{u \not\in A\} \text{ or } \{v\not \in A\}\right) \leq  \mathbb{P}(u \not\in A) + \mathbb{P}(v \not\in A) \\
\leq \frac{2}{r}.
\]

Hence, for each $v\in V(H)$, we have
\begin{align*}
\mathbb{E}\left[\mathbbm{1}_{\{v\in A\}}\cdot|N_{H[A]}(v)|\right] 
&\geq  \mathbb{E}\left[\mathbbm{1}_{\{v\in A\}}\cdot|N(v) \cap A|\right]  \\
&\geq \mathbb{E}\left[\sum_{u\in N_H(v)}\mathbbm{1}_{\{u,v \in A\}}\right] \\
&\geq \sum_{u \in N_H(v)}(1-\mathbb{P}\left(\{u \not\in A\} \text{ or } \{v\not \in A\}\right)) \\
&\geq \left(1-\frac{2}{r}\right)\cdot|N_H(v)|.
\end{align*}
Then, 
\[
\mathbb{E}\left[\sum_{v\in V(H)}\left[\mathbbm{1}_{\{v\in A\}}\cdot|N_{H[A]}(v)|\right]\right]
= \sum_{v\in V(H)}\mathbb{E}\left[\mathbbm{1}_{\{v\in A\}}\cdot|N_{H[A]}(v)|\right] 
\geq \left(1-\frac{2}{r}\right)\cdot \sum_{v\in V(H)}|N_H(v)|.
\]
Therefore, there exists some $A \in \mathcal{C}_H$ such that \[\sum_{v\in V(H)}\left[\mathbbm{1}_{\{v\in A\}}\cdot |N_{H[A]}(v)|\right] \geq \left(1-\frac{2}{r}\right)\cdot \sum_{v\in V(H)}|N_H(v)|\,.\] 
This implies, in particular, that $A\neq \varnothing$ and
\begin{align*}
\frac{1}{|A|}\sum_{v\in V(H[A])}|N_{H[A]}(v)|&=
\frac{1}{|A|}\sum_{v\in V(H)}\left[\mathbbm{1}_{\{v\in A\}}\cdot |N_{H[A]}(v)|\right]\\
&\geq \frac{1}{|V(H)|}\sum_{v\in V(H)}\left[\mathbbm{1}_{\{v\in A\}}\cdot |N_{H[A]}(v)|\right]\\
&\geq
\left(1-\frac{2}{r}\right)\cdot \frac{1}{|V(H)|} \sum_{v\in V(H)}|N_H(v)|,
\end{align*}
that is, $\ad(H[A]) \geq \left(1-\frac{2}{r}\right)\ad(H)$. 
Then, as the minimum degree of $H$ is more than $\frac{2rk}{r-2}$, we have that $\ad(H) > \frac{2rk}{r-2}$. 
Hence, $\ad(H[A]) > \left(1-\frac{2}{r}\right)\cdot\frac{2rk}{r-2}=2k$.
By \Cref{average degree and treewidth}, we conclude that $\tw(H[A]) > k$, a contradiction.
\end{proof}

\begin{corollary}\label{fractional tin-fragility poly chi-bounded}
Every fractionally $\tin$-fragile graph class is  polynomially $(\dg,\omega)$-bounded. 
\end{corollary}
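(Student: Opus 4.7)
The plan is to combine the two main lemmas just established with the Ramsey-based bound from Lemma~\ref{ramsey}. Let $\mathcal{G}$ be a fractionally $\tin$-fragile graph class, witnessed by some function $f\colon\mathbb{N}\to\mathbb{N}$. I need to show that every induced subgraph $H$ of any $G\in\mathcal{G}$ satisfies $\dg(H)\le p(\omega(H))$ for some polynomial $p$ depending only on $f$.

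First, I would fix $r=3$ and transfer fractional $\tin$-fragility from $G$ to $H$. Take a $(1-1/r)$-general cover $\mathcal{C}$ of $G$ with $\tin$-width at most $f(r)$. Restricting each set in $\mathcal{C}$ to $V(H)$ gives a $(1-1/r)$-general cover $\mathcal{C}_H = \{C\cap V(H) : C\in\mathcal{C}\}$ of $H$; and since tree-independence number is monotone under induced subgraphs (simply restrict a witnessing tree decomposition to the vertex subset by intersecting each bag with the subset), every member of $\mathcal{C}_H$ still induces a subgraph with tree-independence number at most $f(r)$.

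Second, I would convert the $\tin$-width bound to a $\tw$-width bound via Lemma~\ref{ramsey}: for each $A\in\mathcal{C}_H$, $\tw(H[A])\le R(\omega(H)+1,f(r)+1)-2$, since $\omega(H[A])\le\omega(H)$. Now Lemma~\ref{bounded tw-fragility bounded degeneracy-new}, applied to $H$ with the cover $\mathcal{C}_H$, parameter $r=3$, and $k=R(\omega(H)+1,f(3)+1)-2$, yields
\[
\dg(H)\le 6\bigl(R(\omega(H)+1,f(3)+1)-2\bigr).
\]

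Finally, I would use the standard Ramsey estimate $R(p,q)\le \binom{p+q-2}{q-1}$ to observe that, with $q=f(3)+1$ held constant, the right-hand side is a polynomial in $\omega(H)$ of degree $f(3)$, which delivers the desired polynomial $(\dg,\omega)$-binding function (depending only on $f$, hence only on the class). I do not anticipate any real obstacle: both structural lemmas have already done the substantive work, and the remaining task is a routine chaining through Ramsey, very much in the spirit of the argument used in Lemma~\ref{alpha-degeneracy dg-omega-boundedness}.
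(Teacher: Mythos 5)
Your proposal is correct and follows essentially the same route as the paper: fix $r=3$, use Lemma~\ref{ramsey} to turn the $\tin$-width bound $f(3)$ of the cover into a treewidth bound $R(\omega+1,f(3)+1)-2$, apply Lemma~\ref{bounded tw-fragility bounded degeneracy-new} to get degeneracy at most $6\bigl(R(\omega+1,f(3)+1)-2\bigr)$, and bound the Ramsey number by a degree-$f(3)$ polynomial. The only (harmless) difference is that you restrict the cover to an arbitrary induced subgraph $H$ explicitly, whereas the paper phrases the argument for graphs of the class with clique number at most $k$; the restriction step you spell out is the same one already used inside the proof of Lemma~\ref{bounded tw-fragility bounded degeneracy-new}.
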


\begin{proof}
Let $\mathcal{G}$ be a fractionally $\tin$-fragile graph class and let $f$ be a function such that for every $r\in \mathbb{N}$, every graph $G\in \mathcal{G}$ has a $(1-1/r)$-general cover $\mathcal{C}$ such that $\tin(G[A]) \leq f(r)$ for every $A \in \mathcal{C}$.
For a positive integer $k$, let $\mathcal{G}_k$ denote the class of all graphs in $\mathcal{G}$ with clique number at most $k$.
Let $G$ be a graph in $\mathcal{G}_k$ and let $\mathcal{C}$ be a $(1-1/3)$-general cover of $G$ such that $\tin(G[A]) \leq f(3)$ for every $A \in \mathcal{C}$.
Consider an arbitrary $A \in \mathcal{C}$.
Since $\tin(G[A]) \leq f(3)$ and $\omega(G[A])\le \omega(G)\le k$, \Cref{ramsey} implies that the treewidth of the graph $G[A]$ is at most $R(k+1,f(3)+1)-2$. 
By \Cref{bounded tw-fragility bounded degeneracy-new}, the degeneracy of $G$ is at most $6R(k+1,f(3)+1)-12$, which is a function of $k$ bounded from above by a polynomial of degree $f(3)$.
\end{proof}

\section{Concluding remarks and open problems}

In this paper we provided evidence toward a positive answer to \Cref{questionltreealpha} and \Cref{fractional tin-fragility bounded independence degeneracy} which, however, remain open in general. We now list some further questions arising from our work. 

We showed in \Cref{layeredhyperbolic} that every hyperbolic uniform disk graph with radius $r$ has layered tree-independence number $\mathcal{O}(\frac{r}{\tanh r})$. It is natural to ask whether this can be strengthened to $\mathcal{O}(1)$ layered tree-independence number. 

\begin{question}\label{hudgconstantlayered}
    Do hyperbolic uniform disk graphs have bounded layered tree-independence number? 
\end{question}

A positive answer to \Cref{hudgconstantlayered} would imply that hyperbolic uniform disk graphs have bounded local tree-independence number and hence, thanks to \Cref{ramsey}, that the treewidth of the neighborhood of every vertex of a hyperbolic uniform disk graph is bounded by a function of the clique number. 
Whether this is the case has been asked by Bläsius and Merker~\cite{BM}.

We observed in \Cref{bounded layered tree-alpha bounded alpha-degeneracy} that bounded layered tree-independence number implies bounded independence degeneracy and recall that \Cref{questionltreealpha} asks whether every graph class of bounded layered tree-independence number admits clique-based separators of sublinear weight. We then ask the following (see also \Cref{reldiagram}).

\begin{question}\label{clique-based_vs_ideg}
Is it true that every graph class admitting clique-based separators of sublinear weight has bounded independence degeneracy?
\end{question}

Extending a result of Ye and Borodin~\cite{YB12}, we showed in \Cref{0-map-theta-degen} that $0$-map graphs have clique cover degeneracy at most $3$. This bound is tight, as there exist planar graphs with clique cover degeneracy $3$. It is natural to ask what happens for $g \geq 1$. 

\begin{question}\label{openmap}
    Do $g$-map graphs have bounded clique cover degeneracy? 
\end{question}

In order to address \Cref{openmap}, one could perhaps use the fact that graphs with bounded genus have bounded degeneracy. A positive answer to \Cref{openmap} would imply that $g$-map graphs admit clique-based separators of size $\mathcal{O}(\sqrt{gn})$.

Recall that $\text{HUDG} \subsetneq \text{DG}$ (see, e.g., \cite{BvdH24}). We ask whether a similar relation holds for spherical uniform disk graphs. There exist disk graphs which are not spherical uniform disk graphs, as implied for example by \Cref{sphericallayered}. Moreover, if the family of spherical disks in a spherical uniform disk graph $G$ does not cover the unit sphere, then applying a stereographic projection gives a family of Euclidean disks whose intersection graph is isomorphic to $G$. However, it is not clear how to deal with families of disks covering the sphere.

\begin{question}\label{quest:SUDG-DG}
    Is it true that $\emph{SUDG} \subseteq \emph{DG}$?
\end{question}

\paragraph{Acknowledgements.} We thank Matija  Bucić for suggesting an argument used in the proof of \Cref{layeredtreealpha-cbs}.
This research was supported by the University of Parma through the action ``Bando di Ateneo 2023 per la ricerca''. 
A.~M. is a member of the Gruppo Nazionale Calcolo Scientifico-Istituto Nazionale di Alta Matematica (GNCS-INdAM).
The work of M.~M.~was supported in part by the Slovenian Research and Innovation Agency (I0-0035, research program P1-0285 and research projects J1-3003, J1-4008, J1-4084, J1-60012, and N1-0370) and by the research program CogniCom (0013103) at the University of Primorska.

\bibliographystyle{plainnat}
\bibliography{biblio}

\begin{thebibliography}{55}
\providecommand{\natexlab}[1]{#1}
\providecommand{\url}[1]{\texttt{#1}}
\expandafter\ifx\csname urlstyle\endcsname\relax
  \providecommand{\doi}[1]{doi: #1}\else
  \providecommand{\doi}{doi: \begingroup \urlstyle{rm}\Url}\fi

\bibitem[Agnarsson and Halld{\'{o}}rsson(2003)]{AH03}
Geir Agnarsson and Magn{\'{u}}s~M. Halld{\'{o}}rsson.
\newblock Coloring powers of planar graphs.
\newblock \emph{SIAM Journal on Discrete Mathematics}, 16\penalty0 (4):\penalty0 651--662, 2003.

\bibitem[Akcoglu et~al.(2002)Akcoglu, Aspnes, DasGupta, and Kao]{AADK02}
Karhan Akcoglu, James Aspnes, Bhaskar DasGupta, and Ming-Yang Kao.
\newblock Opportunity cost algorithms for combinatorial auctions.
\newblock In Erricos~John Kontoghiorghes, Berc Rustem, and Stavros Siokos, editors, \emph{Computational Methods in Decision-Making, Economics and Finance}, pages 455--479. Springer US, 2002.

\bibitem[An et~al.(2023)An, Cho, and Oh]{ACO23}
Shinwoo An, Kyungjin Cho, and Eunjin Oh.
\newblock Faster algorithms for cycle hitting problems on disk graphs.
\newblock \emph{CoRR}, abs/2311.03665, 2023.
\newblock URL \url{https://arxiv.org/abs/2311.03665}.

\bibitem[Atminas and Zamaraev(2018)]{AZ18}
Aistis Atminas and Viktor Zamaraev.
\newblock On forbidden induced subgraphs for unit disk graphs.
\newblock \emph{Discrete \& Computational Geometry}, 60\penalty0 (1):\penalty0 57--97, 2018.

\bibitem[Berman and Fujito(1999)]{BF99}
Piotr Berman and Toshihiro Fujito.
\newblock On approximation properties of the independent set problem for low degree graphs.
\newblock \emph{Theory of Computing Systems}, 32\penalty0 (2):\penalty0 115--132, 1999.

\bibitem[Berthe et~al.(2024)Berthe, Bougeret, Gon\c{c}alves, and Raymond]{BBGR24}
Ga\'{e}tan Berthe, Marin Bougeret, Daniel Gon\c{c}alves, and Jean-Florent Raymond.
\newblock {F}eedback {V}ertex {S}et for pseudo-disk graphs in subexponential {FPT} time.
\newblock \emph{CoRR}, abs/2410.23878, 2024.
\newblock URL \url{https://arxiv.org/abs/2410.23878}.

\bibitem[Bieker et~al.(2023)Bieker, Bl{\"{a}}sius, Dohse, and Jungeblut]{BBDJ23}
Nicholas Bieker, Thomas Bl{\"{a}}sius, Emil Dohse, and Paul Jungeblut.
\newblock Recognizing unit disk graphs in hyperbolic geometry is $\exists\mathbb{R}$-complete.
\newblock \emph{CoRR}, abs/2301.05550, 2023.
\newblock URL \url{https://arxiv.org/abs/2301.05550}.

\bibitem[Bl{\"{a}}sius et~al.(2024)Bl{\"{a}}sius, von~der Heydt, Kisfaludi{-}Bak, Wilhelm, and van Wordragen]{BvdH24}
Thomas Bl{\"{a}}sius, Jean{-}Pierre von~der Heydt, S{\'{a}}ndor Kisfaludi{-}Bak, Marcus Wilhelm, and Geert van Wordragen.
\newblock Structure and independence in hyperbolic uniform disk graphs.
\newblock \emph{CoRR}, abs/2407.09362, 2024.
\newblock URL \url{https://arxiv.org/abs/2407.09362}.

\bibitem[Bläsius and Merker(2023)]{BM}
Thomas Bläsius and Laura Merker.
\newblock \url{https://formal.kastel.kit.edu/teaching/projektgruppe/themen/WiSe2324/hyperbolicUnitDiskGraphs.pdf}, 2023.

\bibitem[Bodlaender(1998)]{Bod98}
Hans~L. Bodlaender.
\newblock A partial $k$-arboretum of graphs with bounded treewidth.
\newblock \emph{Theoretical Computer Science}, 209\penalty0 (1-2):\penalty0 1--45, 1998.

\bibitem[Bodlaender et~al.(2006)Bodlaender, Wolle, and Koster]{BWK06}
Hans~L. Bodlaender, Thomas Wolle, and Arie M. C.~A. Koster.
\newblock Contraction and treewidth lower bounds.
\newblock \emph{Journal of Graph Algorithms and Applications}, 10\penalty0 (1):\penalty0 5--49, 2006.

\bibitem[Brettell et~al.(2025)Brettell, Munaro, Paulusma, and Yang]{BMPY25}
Nick Brettell, Andrea Munaro, Daniël Paulusma, and Shizhou Yang.
\newblock Comparing width parameters on graph classes.
\newblock \emph{European Journal of Combinatorics}, 127:\penalty0 104163, 2025.

\bibitem[Briański et~al.(2024)Briański, Davies, and Walczak]{BDW24}
Marcin Briański, James Davies, and Bartosz Walczak.
\newblock Separating polynomial {$\chi$}-boundedness from {$\chi$}-boundedness.
\newblock \emph{Combinatorica}, 44:\penalty0 1--8, 2024.

\bibitem[Chandran and Subramanian(2005)]{CS05}
L.~Sunil Chandran and C.~R. Subramanian.
\newblock Girth and treewidth.
\newblock \emph{Journal of Combinatorial Theory, Series B}, 93\penalty0 (1):\penalty0 23--32, 2005.

\bibitem[Chekuri and Quanrud(2023)]{CQ23}
Chandra Chekuri and Kent Quanrud.
\newblock Independent sets in elimination graphs with a submodular objective.
\newblock In Nicole Megow and Adam~D. Smith, editors, \emph{Approximation, Randomization, and Combinatorial Optimization. Algorithms and Techniques ({APPROX/RANDOM} 2023)}, volume 275 of \emph{LIPIcs}, pages 24:1--24:22. Schloss Dagstuhl - Leibniz-Zentrum f{\"{u}}r Informatik, 2023.

\bibitem[Chen et~al.(2002)Chen, Grigni, and Papadimitriou]{CGP02}
Zhi-Zhong Chen, Michelangelo Grigni, and Christos~H. Papadimitriou.
\newblock Map graphs.
\newblock \emph{Journal of the ACM}, 49\penalty0 (2):\penalty0 127--138, 2002.

\bibitem[Chudnovsky and Trotignon(2024)]{CT24}
Maria Chudnovsky and Nicolas Trotignon.
\newblock On treewidth and maximum cliques.
\newblock \emph{CoRR}, abs/2405.07471, 2024.
\newblock URL \url{https://arxiv.org/abs/2405.07471}.

\bibitem[Cygan et~al.(2015)Cygan, Fomin, Kowalik, Lokshtanov, Marx, Pilipczuk, Pilipczuk, and Saurabh]{CFK}
Marek Cygan, Fedor~V. Fomin, {\L}ukasz Kowalik, Daniel Lokshtanov, D\'{a}niel Marx, Marcin Pilipczuk, Micha{\l} Pilipczuk, and Saket Saurabh.
\newblock \emph{{P}arameterized {A}lgorithms}.
\newblock Springer, 2015.

\bibitem[Dallard et~al.(2024)Dallard, Milanič, and Štorgel]{DMS24}
Clément Dallard, Martin Milanič, and Kenny Štorgel.
\newblock Treewidth versus clique number. {II}. {T}ree-independence number.
\newblock \emph{Journal of Combinatorial Theory, Series B}, 164:\penalty0 404--442, 2024.

\bibitem[de~Berg and Kisfaludi{-}Bak(2020)]{BK20}
Mark de~Berg and S{\'{a}}ndor Kisfaludi{-}Bak.
\newblock Lower bounds for dominating set in ball graphs and for weighted dominating set in unit-ball graphs.
\newblock In Fedor~V. Fomin, Stefan Kratsch, and Erik~Jan van Leeuwen, editors, \emph{Treewidth, Kernels, and Algorithms - Essays Dedicated to Hans L. Bodlaender on the Occasion of His 60th Birthday}, volume 12160 of \emph{Lecture Notes in Computer Science}, pages 31--48. Springer, 2020.

\bibitem[de~Berg et~al.(2020)de~Berg, Bodlaender, Kisfaludi-Bak, Marx, and van~der Zanden]{BBK20}
Mark de~Berg, Hans~L. Bodlaender, S\'{a}ndor Kisfaludi-Bak, D\'{a}niel Marx, and Tom~C. van~der Zanden.
\newblock A framework for exponential-time-hypothesis--tight algorithms and lower bounds in geometric intersection graphs.
\newblock \emph{SIAM Journal on Computing}, 49\penalty0 (6):\penalty0 1291--1331, 2020.

\bibitem[de~Berg et~al.(2023)de~Berg, Kisfaludi{-}Bak, Monemizadeh, and Theocharous]{dBKMT23}
Mark de~Berg, S{\'{a}}ndor Kisfaludi{-}Bak, Morteza Monemizadeh, and Leonidas Theocharous.
\newblock Clique-based separators for geometric intersection graphs.
\newblock \emph{Algorithmica}, 85\penalty0 (6):\penalty0 1652--1678, 2023.

\bibitem[de~Fraysseix and {Ossona de Mendez}(2007)]{dFOdM07}
Hubert de~Fraysseix and Patrice {Ossona de Mendez}.
\newblock Representations by contact and intersection of segments.
\newblock \emph{Algorithmica}, 47\penalty0 (4):\penalty0 453--463, 2007.

\bibitem[Deniz et~al.(2018)Deniz, Galby, Munaro, and Ries]{DGMR18}
Zakir Deniz, Esther Galby, Andrea Munaro, and Bernard Ries.
\newblock On contact graphs of paths on a grid.
\newblock In Therese Biedl and Andreas Kerren, editors, \emph{Graph Drawing and Network Visualization - 26th International Symposium ({GD} 2018)}, volume 11282 of \emph{Lecture Notes in Computer Science}, pages 317--330. Springer, 2018.

\bibitem[Du and McCarty(2024)]{du2024survey}
Xiying Du and Rose McCarty.
\newblock A survey of degree-boundedness.
\newblock \emph{European Journal of Combinatorics}, page 104092, 2024.
\newblock In Press.

\bibitem[Dujmovi\'c et~al.(2017)Dujmovi\'c, Eppstein, and Wood]{DEW17}
Vida Dujmovi\'c, David Eppstein, and David~R. Wood.
\newblock Structure of graphs with locally restricted crossings.
\newblock \emph{SIAM Journal on Discrete Mathematics}, 31\penalty0 (2):\penalty0 805--824, 2017.

\bibitem[Dujmovi\'{c} et~al.(2017)Dujmovi\'{c}, Morin, and Wood]{DMW17}
Vida Dujmovi\'{c}, Pat Morin, and David~R. Wood.
\newblock Layered separators in minor-closed graph classes with applications.
\newblock \emph{Journal of Combinatorial Theory, Series B}, 127:\penalty0 111--147, 2017.

\bibitem[Dujmovi{\'c} et~al.(2022)Dujmovi{\'c}, Esperet, Morin, Walczak, and Wood]{DEMWW22}
Vida Dujmovi{\'c}, Louis Esperet, Pat Morin, Bartosz Walczak, and David~R. Wood.
\newblock Clustered 3-colouring graphs of bounded degree.
\newblock \emph{Combinatorics, Probability and Computing}, 31\penalty0 (1):\penalty0 123--135, 2022.

\bibitem[Dujmovi\'c et~al.(2023)Dujmovi\'c, Morin, and Wood]{DMW23}
Vida Dujmovi\'c, Pat Morin, and David~R. Wood.
\newblock Graph product structure for non-minor-closed classes.
\newblock \emph{Journal of Combinatorial Theory, Series B}, 162:\penalty0 34--67, 2023.

\bibitem[Dvo\v{r}\'{a}k(2016)]{Dvo16}
Zden\v{e}k Dvo\v{r}\'{a}k.
\newblock Sublinear separators, fragility and subexponential expansion.
\newblock \emph{European Journal of Combinatorics}, 52:\penalty0 103--119, 2016.

\bibitem[Galby et~al.(2023)Galby, Munaro, and Yang]{GMY23}
Esther Galby, Andrea Munaro, and Shizhou Yang.
\newblock Polynomial-time approximation schemes for independent packing problems on fractionally tree-independence-number-fragile graphs.
\newblock In Erin~W. Chambers and Joachim Gudmundsson, editors, \emph{39th International Symposium on Computational Geometry (SoCG 2023)}, volume 258 of \emph{LIPIcs}, pages 34:1--34:15. Schloss Dagstuhl -- Leibniz-Zentrum f{\"u}r Informatik, 2023.

\bibitem[Galby et~al.(2024)Galby, Munaro, and Yang]{GMY24}
Esther Galby, Andrea Munaro, and Shizhou Yang.
\newblock Polynomial-time approximation schemes for induced subgraph problems on fractionally tree-independence-number-fragile graphs.
\newblock \emph{CoRR}, abs/2402.18352, 2024.
\newblock URL \url{https://arxiv.org/abs/2402.18352}.

\bibitem[Gy{\'a}rf{\'a}s(1987)]{Gyarfas87}
Andr{\'a}s Gy{\'a}rf{\'a}s.
\newblock Problems from the world surrounding perfect graphs.
\newblock \emph{Zastosowania Matematyki}, 19\penalty0 (3-4):\penalty0 413--441, 1987.

\bibitem[Hlin{\v{e}}n{\'y}(1995)]{Hli95}
Petr Hlin{\v{e}}n{\'y}.
\newblock Contact graphs of curves.
\newblock In Franz{-}Josef Brandenburg, editor, \emph{Graph Drawing, Symposium on Graph Drawing ({GD} '95)}, volume 1027 of \emph{Lecture Notes in Computer Science}, pages 312--323. Springer, 1995.

\bibitem[Hlin{\v{e}}n{\'y}(1998{\natexlab{a}})]{H98}
Petr Hlin{\v{e}}n{\'y}.
\newblock The maximal clique and colourability of curve contact graphs.
\newblock \emph{Discrete Applied Mathematics}, 81\penalty0 (1-3):\penalty0 59--68, 1998{\natexlab{a}}.

\bibitem[Hlin{\v{e}}n{\'y}(1998{\natexlab{b}})]{Hli98}
Petr Hlin{\v{e}}n{\'y}.
\newblock Classes and recognition of curve contact graphs.
\newblock \emph{Journal of Combinatorial Theory, Series B}, 74\penalty0 (1):\penalty0 87--103, 1998{\natexlab{b}}.

\bibitem[Johnson et~al.(2021)Johnson, Paulusma, and van Leeuwen]{JPL21}
Matthew Johnson, Dani{\"{e}}l Paulusma, and Erik~Jan van Leeuwen.
\newblock What graphs are 2-dot product graphs?
\newblock \emph{International Journal of Computational Geometry \& Applications}, 31\penalty0 (1):\penalty0 1--16, 2021.

\bibitem[Kammer and Tholey(2014)]{KT14}
Frank Kammer and Torsten Tholey.
\newblock Approximation algorithms for intersection graphs.
\newblock \emph{Algorithmica}, 68\penalty0 (2):\penalty0 312--336, 2014.

\bibitem[Kisfaludi{-}Bak(2020)]{KB20}
S{\'{a}}ndor Kisfaludi{-}Bak.
\newblock Hyperbolic intersection graphs and (quasi)-polynomial time.
\newblock In Shuchi Chawla, editor, \emph{Proceedings of the 2020 {ACM-SIAM} Symposium on Discrete Algorithms ({SODA} 2020)}, pages 1621--1638. {SIAM}, 2020.

\bibitem[Kr{\'{a}}l(2004)]{Kra04}
Daniel Kr{\'{a}}l.
\newblock Coloring powers of chordal graphs.
\newblock \emph{SIAM Journal on Discrete Mathematics}, 18\penalty0 (3):\penalty0 451--461, 2004.

\bibitem[Kuszmaul and Leiserson(2021)]{KL21}
William Kuszmaul and Charles~E. Leiserson.
\newblock Floors and ceilings in divide-and-conquer recurrences.
\newblock In Hung~Viet Le and Valerie King, editors, \emph{4th Symposium on Simplicity in Algorithms ({SOSA} 2021)}, pages 133--141. {SIAM}, 2021.

\bibitem[Lee(1997)]{Lee}
John~M. Lee.
\newblock \emph{{R}iemannian {M}anifolds: {A}n {I}ntroduction to {C}urvature}.
\newblock Springer, 1997.

\bibitem[Lima et~al.(2024)Lima, Milani\v{c}, Mur\v{s}i\v{c}, Okrasa, Rz\k{a}\.{z}ewski, and \v{S}torgel]{LMMORS24}
Paloma~T. Lima, Martin Milani\v{c}, Peter Mur\v{s}i\v{c}, Karolina Okrasa, Pawe\l{} Rz\k{a}\.{z}ewski, and Kenny \v{S}torgel.
\newblock Tree decompositions meet induced matchings: {B}eyond {M}ax {W}eight {I}ndependent {S}et.
\newblock In Timothy~M. Chan, Johannes Fischer, John Iacono, and Grzegorz Herman, editors, \emph{32nd Annual European Symposium on Algorithms ({ESA} 2024)}, volume 308 of \emph{LIPIcs}, pages 85:1--85:17. Schloss Dagstuhl - Leibniz-Zentrum f{\"{u}}r Informatik, 2024.

\bibitem[Lokshtanov et~al.(2024)Lokshtanov, Panolan, Saurabh, Xue, and Zehavi]{LPSXZ24}
Daniel Lokshtanov, Fahad Panolan, Saket Saurabh, Jie Xue, and Meirav Zehavi.
\newblock Bipartizing (pseudo-)disk graphs: Approximation with a ratio better than 3.
\newblock In Amit Kumar and Noga Ron{-}Zewi, editors, \emph{Approximation, Randomization, and Combinatorial Optimization. Algorithms and Techniques ({APPROX/RANDOM} 2024)}, volume 317 of \emph{LIPIcs}, pages 6:1--6:14. Schloss Dagstuhl - Leibniz-Zentrum f{\"{u}}r Informatik, 2024.

\bibitem[Maehara(2003)]{Mae03}
Hiroshi Maehara.
\newblock On a condition for the union of spherical caps to be connected.
\newblock \emph{Journal of Combinatorial Theory, Series A}, 101\penalty0 (2):\penalty0 264--270, 2003.

\bibitem[Maehara(2004)]{Mae04}
Hiroshi Maehara.
\newblock On the intersection graph of random caps on a sphere.
\newblock \emph{European Journal of Combinatorics}, 25\penalty0 (5):\penalty0 707--718, 2004.

\bibitem[Maehara and Martini(2017)]{MM17}
Hiroshi Maehara and Horst Martini.
\newblock Geometric probability on the sphere.
\newblock \emph{Jahresbericht der Deutschen Mathematiker-Vereinigung}, 119:\penalty0 93--132, 2017.

\bibitem[Maehara and Martini(2024)]{MM}
Hiroshi Maehara and Horst Martini.
\newblock \emph{Circles, Spheres and Spherical Geometry}.
\newblock Birkhäuser, 2024.

\bibitem[Ne\v{s}et\v{r}il and de~Mendez(2012)]{NO12}
Jaroslav Ne\v{s}et\v{r}il and Patrice~Ossona de~Mendez.
\newblock \emph{Sparsity - Graphs, Structures, and Algorithms}.
\newblock Springer, 2012.

\bibitem[Peeters(1991)]{Pee91}
René Peeters.
\newblock On coloring $j$-unit sphere graphs.
\newblock Technical report, Tilburg University, Department of Economics, 1991.

\bibitem[Pinchasi(2014)]{Pin14}
Rom Pinchasi.
\newblock A finite family of pseudodiscs must include a ``small'' pseudodisc.
\newblock \emph{SIAM Journal on Discrete Mathematics}, 28\penalty0 (4):\penalty0 1930--1934, 2014.

\bibitem[Ramsay and Richtmyer(1995)]{RR}
Arlan Ramsay and Robert~D. Richtmyer.
\newblock \emph{Introduction to Hyperbolic Geometry}.
\newblock Springer, 1995.

\bibitem[Robinson(2006)]{Rob06}
P.~L. Robinson.
\newblock The sphere is not flat.
\newblock \emph{The American Mathematical Monthly}, 113\penalty0 (2):\penalty0 171--173, 2006.

\bibitem[Scott and Seymour(2020)]{SS20}
Alex Scott and Paul Seymour.
\newblock A survey of {{\(\chi\)}}-boundedness.
\newblock \emph{Journal of Graph Theory}, 95\penalty0 (3):\penalty0 473--504, 2020.

\bibitem[Ye and Borodin(2012)]{YB12}
Yuli Ye and Allan Borodin.
\newblock Elimination graphs.
\newblock \emph{ACM Transactions on Algorithms}, 8\penalty0 (2):\penalty0 14:1--14:23, 2012.

\end{thebibliography}

\end{document}